\newtheorem{MainThm}{Theorem}
\newtheorem{MainCor}[MainThm]{Corollary}
\newtheorem{conj}{Conjecture}
\newtheorem{thm}{Theorem}[section]
\newtheorem{cor}[thm]{Corollary}
\newtheorem{lem}[thm]{Lemma}
\newtheorem{prop}[thm]{Proposition}
\theoremstyle{definition}
\newtheorem{defn}[thm]{Definition}
\theoremstyle{remark}
\newtheorem{rem}[thm]{Remark}
\newtheorem{example}[thm]{Example}
\numberwithin{equation}{section}
\newcommand{\bC}{\mathbb{C}}
\newcommand{\bF}{\mathbb{F}}
\newcommand{\bP}{\mathbb{P}}
\newcommand{\bQ}{\mathbb{Q}}
\newcommand{\bR}{\mathbb{R}}
\newcommand{\bZ}{\mathbb{Z}}
\newcommand\lra{\longrightarrow}
\newcommand\trf{\mathrm{trf}}
\newcommand\colim{\mathrm{colim \,}}
\newcommand\hocolim{\mathrm{hocolim \,}}
\newcommand{\CircNum}[1]{\ooalign{\hfil\raise .00ex\hbox{\scriptsize #1}\hfil\crcr\mathhexbox20D}}
\newcommand{\map}{\mathrm{map}}
\newcommand{\Aut}{\mathrm{Aut}}
\newcommand{\Out}{\mathrm{Out}}
\mathchardef\ordinarycolon\mathcode`\:
\title[Cohomology of $\mathrm{Aut}(F_n)$]{Cohomology of automorphism groups of free groups with twisted coefficients}
\author{Oscar Randal-Williams}
\thanks{Supported by ERC Advanced Grant No.\ 228082, the Danish National Research Foundation through the Centre for Symmetry and Deformation, and EPSRC grant EP/M027783/1.}
\email{o.randal-williams@dpmms.cam.ac.uk}
\address{Centre for Mathematical Sciences\\
Wilberforce Road\\
Cambridge CB3 0WB\\
UK}
\date{\today}
\subjclass[2010]{20F28, 20J06, 57R20}
\keywords{Automorphisms of free groups, homology stability}
\begin{document}

\begin{abstract}
We compute the groups $H^*(\mathrm{Aut}(F_n); M)$ and $H^*(\mathrm{Out}(F_n); M)$ in a stable range, where $M$ is obtained by applying a Schur functor to $H_\bQ$ or $H^*_\bQ$, respectively the first rational homology and cohomology of $F_n$. The answer may be described in terms of stable multiplicities of irreducibles in the plethysm $\mathrm{Sym}^k \circ \mathrm{Sym}^l$ of symmetric powers.

We also compute the stable integral cohomology groups of $\Aut(F_n)$ with coefficients in $H$ or $H^*$.
\end{abstract}
\maketitle

\section{Statement of results}

Galatius \cite{galatius-2006} has proved the remarkable theorem that the natural homomorphisms
$$\Sigma_n \lra \mathrm{Aut}(F_n) \lra \mathrm{Out}(F_n)$$
both induce homology isomorphisms in degrees $2* \leq n - 3$ with integral coefficients. His approach is to model $B\mathrm{Out}(F_n)$ as the space $\mathcal{G}_n$ of graphs of the homotopy type of $\vee^n S^1$, and $B\mathrm{Aut}(F_n)$ as the space $\mathcal{G}^1_n$ of pointed graphs of the same homotopy type. He then produces a natural map from such spaces of graphs to the infinite loop space $Q_0(S^0)$, which he shows has a certain homological connectivity. One consequence of this is that $H^i(\Aut(F_n);\bQ)=0$ for $0 < i \leq \tfrac{n-3}{2}$.

At the same time, Satoh \cite{Satoh, Satoh2} has studied the low dimensional (co)homology of $\mathrm{Aut}(F_n)$ and $\mathrm{Out}(F_n)$ with coefficients in the module $H := H_1(F_n;\bZ)$ given by the abelianisation of $F_n$, and in the dual module $H^* := H^1(F_n;\bZ)$. His methods are those of combinatorial group theory, and proceed by calculation with a presentation of these groups. 

Our goal is to show that the stable cohomology of $\Aut(F_n)$ and $\Out(F_n)$ with twisted coefficients may also be approached with the geometric techniques used by Galatius, along with a little representation theory. The method we shall introduce is quite general and may be applied whenever a suitable Madsen--Weiss-type theorem has been proved; in Appendix \ref{sec:MCG} we will show how to use it to recover a theorem of Looijenga \cite{Looijenga2} on the stable cohomology of mapping class groups with twisted coefficients.

For $\Aut(F_n)$ and $\Out(F_n)$ we will consider cohomology with coefficients in the modules $H^*_\bQ := H^1(F_n; \bQ)$ and $H_\bQ := H_1(F_n;\bQ)$, and more generally with coefficients in $S_\lambda(H^*_\bQ)$ and $S_\lambda(H_\bQ)$, where $S_\lambda(-)$ is the \emph{Schur functor} associated to a partition $\lambda \vdash q$, which we think of as being given by a Young diagram. To define this, recall that to such a partition there is an associated irreducible $\bQ[\Sigma_{q}]$-module $S^\lambda$, the Specht module. For a $\bQ$-vector space $V$ we may consider $V^{\otimes q}$ as a $\bQ[\Sigma_{q}]$-module by permuting the factors, and we may hence form $S_\lambda(V) := \mathrm{Hom}_{\bQ[\Sigma_{q}]}(S^\lambda, V^{\otimes q})$. This construction defines the Schur functor $S_\lambda$. It is a basic result that $S_\lambda(V)$ is an irreducible representation of $GL(V)$.

In fact, our result is best expressed as calculating $H^*(\Aut(F_n);H_\bQ^{\otimes q})$ as a $\bQ[\Sigma_q]$-module. The result for $S_\lambda(H_\bQ)$ may then be extracted as
$$H^*(\Aut(F_n);S_\lambda(H_\bQ)) = \mathrm{Hom}_{\Sigma_q}(S^\lambda , H^*(\Aut(F_n);H_\bQ^{\otimes q})).$$
We write $\bQ^-$ for the sign representation of $\Sigma_q$.

\begin{MainThm}\mbox{}\label{thm:Main1}
\begin{enumerate}[(i)]
\item $H^*(\mathrm{Aut}(F_n); (H^*)^{\otimes q})=0$ for $2* \leq n - q-3$.

\item $H^*(\Aut(F_n); H_\bQ^{\otimes q})=0$ for $2* \leq n-q-3$ if $* \neq q$, and $H^{q}(\Aut(F_n); H_\bQ^{\otimes q}) \otimes \bQ^-$ is the permutation module on the set of partitions of $\{1,2,\ldots,q\}$.
\end{enumerate}
\end{MainThm}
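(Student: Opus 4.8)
The plan is to follow Galatius's geometric model and his homological stability / scanning machinery, but now carrying along the twisted coefficient system. Recall that $B\Aut(F_n)$ is modelled by a space $\mathcal G^1_n$ of pointed graphs homotopy equivalent to $\vee^n S^1$, and that there is a natural map to an infinite loop space which Galatius shows is highly connected. The module $H = H_1(F_n;\bZ)$ is the value of a coefficient system on these graphs (the first homology of the graph), so $H^*(\Aut(F_n); H^{\otimes q})$ is computed by the cohomology of the relevant Thom/section space with coefficients in the $q$-fold tensor power of the tautological local system $\mathcal H$. The first step is therefore to set up the parametrised/scanning statement with twisted coefficients: one wants a Madsen--Weiss-type identification of the stable cohomology of $\mathcal G^1_n$ with coefficients in $\mathcal H^{\otimes q}$ (and in $(\mathcal H^*)^{\otimes q}$) in terms of the cohomology of an appropriate infinite loop space with a corresponding twisted coefficient system, uniformly in $n$ in a range growing linearly in $n$ (and decreasing linearly in $q$, which is where the $n-q-3$ comes from: each extra tensor factor costs one in the stable range).

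Next I would compute the twisted stable cohomology of the target infinite loop space. For $(H^*)^{\otimes q}$: the tautological cohomology local system $\mathcal H^*$ on the relevant graph space should, after scanning, correspond to (a power of) the reduced homology of a point in the relevant Thom spectrum direction — concretely one expects $\mathcal H^*$ to be "positive-degree" in a way that forces all of $H^*(-;(\mathcal H^*)^{\otimes q})$ to vanish stably, giving part (i). This is the statement that, rationally, $H^1(F_n;\bQ)$ behaves like the cohomology of the wedge of circles with no invariants or coinvariants surviving the stabilisation map. For $H_\bQ^{\otimes q}$: here $\mathcal H$ is dual, and there is exactly one "unit class" in each tensor factor; the product of these lands in cohomological degree $q$, and the only contributions come from the diagonal-type classes. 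Identifying $H^q(\Aut(F_n);H_\bQ^{\otimes q}) \otimes \bQ^-$ with the permutation module on partitions of $\{1,\dots,q\}$ amounts to the combinatorial statement that the surviving classes are indexed by ways of grouping the $q$ tensor factors — each block of a partition contributing a single class via the cup product of the corresponding unit classes, with the sign twist arising because the relevant classes are in odd degree $1$ in each factor (so transpositions act by $-1$, and only the symmetrised-within-blocks combinations survive). I would prove this by an explicit identification of the $E_\infty$-page or directly of the stable cohomology, using that the infinite loop space splits rationally and that the twisted coefficients decompose accordingly.

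The main technical obstacle is the first step: establishing the twisted Madsen--Weiss statement with a coefficient system, i.e.\ upgrading Galatius's homological connectivity result to one for cohomology with coefficients in $\mathcal H^{\otimes q}$ and controlling how the stable range degrades with $q$. This requires (a) a group-completion / scanning argument that is natural in the coefficient system — one wants to realise $\mathcal H$ as pulled back from the target and then invoke a parametrised version of Galatius's theorem, or equivalently run the semi-simplicial resolution by arcs/leaves with twisted coefficients and check the spectral sequence collapses in the right range — and (b) a homological-stability input showing that adding a tensor factor of $H$ shifts the stable range by a bounded amount, which is the polynomiality of the coefficient system in the sense of Randal-Williams--Wahl. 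Once the twisted scanning statement is in hand, parts (i) and (ii) reduce to the essentially formal rational computation of twisted cohomology of an infinite loop space sketched above, and extracting the $S_\lambda(H_\bQ)$ statement is then immediate from $H^*(\Aut(F_n);S_\lambda(H_\bQ)) = \mathrm{Hom}_{\Sigma_q}(S^\lambda, H^*(\Aut(F_n);H_\bQ^{\otimes q}))$.
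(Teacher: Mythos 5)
Your high-level picture — model $B\Aut(F_n)$ by a space of graphs, invoke a Madsen--Weiss/Galatius scanning theorem, compute rationally on the infinite-loop-space side, and extract $S_\lambda(H_\bQ)$ by Schur--Weyl — is the right one, and you have correctly located the hard step. But the device you propose to handle the hard step (``a parametrised version of Galatius's theorem'' carrying the twisted coefficient system $\mathcal H^{\otimes q}$ through the scanning argument, i.e.\ an actual Madsen--Weiss theorem with local coefficients) is not the same as what the paper does, and you do not say what the twisted cohomology of the resulting infinite loop space would be or how you would compute it, so as written your plan stalls exactly where you flag the obstacle.

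The paper's trick sidesteps twisted scanning altogether. One keeps the \emph{untwisted} Madsen--Weiss theorem, but applies it to a different family of spaces: graphs equipped with a map to a background space $X$. Taking $X = K(V^*,2)$ for an auxiliary finite-dimensional $\bQ$-vector space $V$, the fibration $\map_*(\vee^n S^1, K(V^*,2)) \to \mathcal G^1_n(K(V^*,2)) \to B\Aut(F_n)$ has fibre a $K(H^*\otimes V^*,1)$, so its Serre spectral sequence has $E_2^{p,q}=H^p(\Aut(F_n);\wedge^q(H_\bQ\otimes V))$; the twisted coefficients now appear \emph{for free} as the rows of an untwisted Serre spectral sequence, and the abutment is computed by the Galatius-type theorem and homological stability for $\mathcal G^1_n(X)$, giving $\mathrm{Sym}^*(\mathrm{Sym}^{*>0}(V[2]))$. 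The collapse of the spectral sequence is not a ``unit-class/cup-product'' argument as you sketch but a weight argument: the scalar torus $\bQ^\times\leq GL(V)$ acts with weight $q$ on the $q$th row, so there are no nonzero $\bQ[\bQ^\times]$-equivariant differentials between rows. The permutation-module answer is then obtained by identifying $\mathrm{Hom}_{GL(V)}(V^{\otimes q},\mathrm{Sym}^*(\mathrm{Sym}^{*>0}(V)))$ as a $\Sigma_q$-module via a first-fundamental-theorem argument (Proposition~\ref{prop:IdentifySymSym}), together with the Schur--Weyl decomposition $\wedge^q(H_\bQ\otimes V)\cong\bigoplus_\lambda S_\lambda(H_\bQ)\otimes S_{\lambda'}(V)$. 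These two ingredients — the auxiliary $V$ with $X=K(V^*,2)$ to smuggle in all $S_\lambda$ simultaneously, and the $\bQ^\times$-weight argument for collapse — are the missing ideas.

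Separately, your sketch for part (i) (``$\mathcal H^*$ is positive-degree so everything vanishes'') is not how the paper proves it and would not on its own give the range $2*\leq n-q-3$. The paper's route for (i) is: a crude spectral-sequence vanishing bound $H^*(\Aut(F_n);(H^*)^{\otimes q})=0$ for $2*\leq n-4q$ via iterated fibrations $(\vee^n S^1)^k\to\mathcal G_n^{k+1}\to\mathcal G_n^1$ (Proposition~\ref{prop:VanishTensorPowers}); the Randal-Williams--Wahl homological stability theorem for polynomial coefficient systems of degree $\leq q$ to improve the range to $2*\leq n-q-3$; and a universal-coefficient lemma (Lemma~\ref{lem:UCT}) to pass between homology with $H^{\otimes q}$ and cohomology with $(H^*)^{\otimes q}$. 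You do gesture at the polynomiality/stability input in your discussion of obstacle~(b), but the crude bound and the UCT step are absent, and neither ``positive-degree'' vanishing nor a twisted scanning theorem are invoked for (i) in the paper at all.
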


Theorem \ref{thm:Main1} (i) may also be deduced from work of Djament--Vespa \cite{DV}. We believe that Theorem \ref{thm:Main1} (ii) may also be obtained by combining work of Djament \cite{DjamentHodge} and Vespa \cite{VespaExt}. 

However, more important than these particular results is our technique, which is of very general applicability. For example, it can easily be modified to obtain results for $\Out(F_n)$. 

\begin{MainThm}\mbox{}\label{thm:Main2}
\begin{enumerate}[(i)]
\item $H^*(\mathrm{Out}(F_n); (H^*_\bQ)^{\otimes q})=0$ for $2* \leq n - q-3$.

\item $H^*(\Out(F_n); H_\bQ^{\otimes q})=0$ for $2* \leq n-q-3$ if $* \neq q$, and as long as $n \geq 4q+3$ $H^{q}(\Out(F_n) ; H_\bQ^{\otimes q})\otimes \bQ^-$ is the permutation module on the set of partitions of $\{1,2,\ldots,q\}$ having no parts of size 1.
\end{enumerate}
\end{MainThm}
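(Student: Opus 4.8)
The plan is to deduce Theorem~B from Theorem~A by comparing the cohomology of $\Aut(F_n)$ and $\Out(F_n)$ with the given twisted coefficients. The geometric input is that $B\Out(F_n)$ is modelled by the space $\cG_n$ of graphs of the homotopy type of $\vee^n S^1$, while $B\Aut(F_n)$ is modelled by the space $\cG_n^1$ of \emph{pointed} such graphs; the forgetful map $\cG_n^1 \to \cG_n$ that discards the basepoint corresponds to the homomorphism $\Aut(F_n) \to \Out(F_n)$. The fibre of this map over a graph $\Gamma$ is (a model for) $\Gamma$ itself — the choice of where to put the basepoint — so the Leray--Serre spectral sequence of $B\Aut(F_n) \to B\Out(F_n)$ with coefficients in the local system $H_\bQ^{\otimes q}$ has $E_2$-page computed from the cohomology of $\Out(F_n)$ with coefficients in $H^*(F_n; H_\bQ^{\otimes q})$, i.e.\ in $H_\bQ^{\otimes q}$ in degree $0$ and $H_\bQ \otimes H_\bQ^{\otimes q} = H_\bQ^{\otimes(q+1)}$ in degree $1$. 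Thus one gets a long exact (Gysin/Wang-type) sequence
\[
\cdots \lra H^{*}(\Out(F_n); H_\bQ^{\otimes q}) \lra H^{*}(\Aut(F_n); H_\bQ^{\otimes q}) \lra H^{*-1}(\Out(F_n); H_\bQ^{\otimes (q+1)}) \lra H^{*+1}(\Out(F_n); H_\bQ^{\otimes q}) \lra \cdots
\]
valid in a stable range, and similarly with $(H^*_\bQ)^{\otimes q}$ in place of $H_\bQ^{\otimes q}$. (One should be slightly careful: $H^1(F_n; -)$ involves a twist by $H^*_\bQ \cong H_\bQ^\vee$, but since we work rationally and $H_\bQ$ is self-dual as an abstract $\Out(F_n)$-module only up to the $GL_n$-representation theory, the correct statement is that the fibre contributes $H^1(\vee^n S^1;\bQ) \otimes (\text{coefficients})$, which I will identify with $H_\bQ^{\otimes(q+1)}$ using $H^1(\vee^n S^1;\bQ) \cong H^*_\bQ$; the distinction between $H_\bQ$ and $H^*_\bQ$ here only affects the $\Sigma_q$-equivariant bookkeeping and not the vanishing ranges.)

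Next I would run the induction on $q$. For part~(i), Theorem~A(i) gives $H^*(\Aut(F_n);(H^*_\bQ)^{\otimes q})=0$ for $2*\le n-q-3$; feeding this into the long exact sequence above (for the dual coefficients) and inducting on $q$ immediately forces $H^*(\Out(F_n);(H^*_\bQ)^{\otimes q})=0$ in the range $2*\le n-q-3$, since both the $\Aut$-term and the shifted $\Out$-term (with $q+1$ in place of $q$, whose vanishing range is only mildly worse) vanish. The base case $q=0$ is Galatius's rational (co)homology vanishing, as recalled in the excerpt. For part~(ii) the same machine runs, but now there is a single nonzero group $H^q(\Aut(F_n);H_\bQ^{\otimes q})$ to track. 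Using the long exact sequence in the range where all other $\Aut$-terms vanish, $H^*(\Out(F_n);H_\bQ^{\otimes q})$ vanishes except possibly in degrees $*=q$ and $*=q-1$, and the sequence around those two degrees reads
\[
0 \lra H^{q-1}(\Out(F_n);H_\bQ^{\otimes q}) \lra H^{q-1}(\Out(F_n);H_\bQ^{\otimes(q-1)}) \lra H^{q}(\Aut(F_n);H_\bQ^{\otimes q}) \lra H^{q}(\Out(F_n);H_\bQ^{\otimes q}) \lra 0,
\]
where the middle map is induced by the Gysin connecting homomorphism and the outer $\Aut$-term is the permutation module on partitions of $\{1,\dots,q\}$ by Theorem~A(ii). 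So by induction $H^{q-1}(\Out(F_n);H_\bQ^{\otimes(q-1)})$ is the permutation module on partitions of $\{1,\dots,q-1\}$ (tensored with the sign), and everything comes down to identifying the connecting map and its cokernel $\otimes\,\bQ^-$ with the permutation module on partitions of $\{1,\dots,q\}$ having no singleton blocks.

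The main obstacle — and the only place real work is needed — is this last identification: understanding the connecting homomorphism $\delta\colon H^{q-1}(\Out(F_n);H_\bQ^{\otimes(q-1)}) \to H^{q}(\Aut(F_n);H_\bQ^{\otimes q})$ well enough to compute its cokernel as a $\Sigma_q$-module. Combinatorially, $\delta$ should be the map that takes a partition $P$ of $\{1,\dots,q-1\}$ to the partition of $\{1,\dots,q\}$ obtained by adjoining $q$ to some block of $P$ — i.e.\ the ``merge $q$ into an existing block'' map — summed appropriately, so that its image is spanned by partitions of $\{1,\dots,q\}$ in which the element $q$ is \emph{not} a singleton, and after symmetrising over $\Sigma_q$ and using that the classes involved generate under the $\Sigma_q$-action, the cokernel is exactly the span of partitions of $\{1,\dots,q\}$ in which no block is a singleton. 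Making this precise requires the explicit description of the classes in $H^q(\Aut(F_n);H_\bQ^{\otimes q})$ from the proof of Theorem~A(ii) (they are built from the fundamental class of the fibre $\vee^n S^1$ in each of the $q$ tensor slots, glued according to a partition), together with the geometric meaning of $\delta$ as ``cap with the Euler class of the basepoint fibration,'' which on a wedge of circles records exactly the obstruction to splitting off a free basepoint direction — accounting for the vanishing of singleton blocks. Once this identification is in hand, the stated range $n\ge 4q+3$ is just what is needed to keep all the shifted vanishing ranges ($2*\le n-(q+1)-3$, used at the top of the induction) comfortably in force simultaneously; I would track it by bookkeeping at the end rather than optimising along the way.
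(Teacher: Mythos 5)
The fibre of $B\Aut(F_n) \to B\Out(F_n)$ is a graph whose first cohomology is $H^*$, not $H$, so the Gysin/Wang sequence for coefficients $M$ has its extra term in $H^*_\bQ \otimes M$, not $H_\bQ \otimes M$. Rationally $H_\bQ$ and $H^*_\bQ$ are \emph{not} isomorphic as $\Out(F_n)$-modules, and Theorem~\ref{thm:Main1} shows they have completely different stable cohomology: $(H^*_\bQ)^{\otimes q}$ vanishes, while $H_\bQ^{\otimes q}$ does not. Your parenthetical remark that the distinction ``only affects the $\Sigma_q$-equivariant bookkeeping and not the vanishing ranges'' is therefore the crux of a genuine gap, not a minor subtlety. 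For $M = H_\bQ^{\otimes q}$ the third term of your long exact sequence is $H^{*-1}(\Out(F_n); H^*_\bQ \otimes H_\bQ^{\otimes q})$, a mixed module covered neither by Theorem~\ref{thm:Main1} nor by your inductive hypothesis, so the induction does not close. Relatedly, the four-term sequence you display ``around degrees $q-1$ and $q$'' is not a segment of any one Wang sequence: the adjacent terms carry tensor powers that a single such sequence does not produce.

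The paper takes a different and cleaner route. It first shows (Lemma~\ref{lem:OutSplit}, via the Becker--Gottlieb transfer with local coefficients, or equivalently Kawazumi's result) that $H^*(\Out(F_n);M) \to H^*(\Aut(F_n);M)$ is not merely injective-in-a-range but \emph{split} injective for every $\bZ[\tfrac{1}{n-1}][\Out(F_n)]$-module $M$, with explicit complement $H^{*-1}(\Out(F_n); H^* \otimes M)$. Split injectivity alone gives part (i) and the vanishing half of (ii) immediately, with no induction and no mixed coefficients. For the non-vanishing half of (ii) the paper does not attempt to identify a connecting homomorphism; instead it re-runs the $K(V^*,2)$ spectral-sequence argument from the proof of Theorem~\ref{thm:Main1} with $\mathcal{G}_n(K(V^*,2)) \to B\Out(F_n)$ in place of $\mathcal{G}^1_n(K(V^*,2)) \to B\Aut(F_n)$, deduces degeneration from the vanishing already established (in the stable range all surviving $E_2$-entries sit in even total degree, so there is no room for differentials --- a weight argument is no longer conclusive here because of the extra $\mathrm{Sym}^*(V[2])$ factor), and reads off the answer by comparison with $H^*(Q_0(K(V^*,2)_+);\bQ)$ via Theorems~\ref{thm:Xlim} and~\ref{thm:Xstab}. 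Your proposed combinatorial description of the transgression as ``merge $q$ into an existing block'' is unsubstantiated, and even granted it, the step from ``the element $q$ is not a singleton'' to ``no block is a singleton'' after symmetrising would need a real argument; the paper's route sidesteps both issues.
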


We give tables listing the dimensions of the groups $H^{\vert \lambda \vert}(\Aut(F_n);S_\lambda(H_\bQ))$ and $H^{\vert \lambda \vert}(\Out(F_n);S_\lambda(H_\bQ))$ for $\vert \lambda \vert \leq 6$ in Appendix \ref{sec:tables}.

Each partition of $\{1,2,\ldots,q\}$ may be expressed as a partition of a smaller set with no parts of size 1 along with the set of parts of size 1, which translates to the expression
\begin{equation*}
H^{q}(\Aut(F_n) ; H_\bQ^{\otimes q}) \cong \bigoplus_{\ell \leq q} \mathrm{Ind}_{\Sigma_\ell \times \Sigma_{q-\ell}}^{\Sigma_q} (H^{\ell}(\Out(F_n) ; H_\bQ^{\otimes \ell}) \otimes \bQ^-)
\end{equation*}
as long as $n \geq 4q+3$. Applying $\mathrm{Hom}_{\Sigma_q}(S^\lambda,-)$ and using the Pieri rule gives the pleasant formula
\begin{equation}\label{eq:OutAutTrade}
H^{\vert \lambda \vert}(\Aut(F_n) ; S_\lambda(H_\bQ)) \cong \bigoplus_{\mu \in \rho(\lambda)}  H^{\vert \mu \vert}(\Out(F_n) ; S_\mu(H_\bQ))
\end{equation}
as long as $n \geq 4\vert \lambda \vert+3$, where $\rho(\lambda)$ denotes the set of Young diagrams which may be obtained from $\lambda$ by removing at most one box from each row.

\subsection{Stable plethysm of symmetric powers}

For a Young diagram $\mu$ and integers $k$ and $l$ such that $2\vert \mu \vert \leq kl$, let $(kl-\vert\mu\vert, \mu)$ be the Young diagram obtained by adding a row of length $kl-\vert\mu\vert$ to the top of $\mu$, and let $\nu^{k,l}(\mu)$ denote the multiplicity of the irreducible $GL(V)$-representation $S_{(kl-\vert\mu\vert, \mu)}(V)$ in $\mathrm{Sym}^k(\mathrm{Sym}^l(V))$. Manivel has shown \cite{Manivel} that the numbers $\nu^{k,l}(\mu)$ are increasing and eventually constant functions of both $k$ and $l$, and we write $\nu^\infty(\mu)$ for the stable value. This stable value is attained as soon as $l \geq \mu_1$ and $k \geq \vert \mu \vert$. Using the work of Manivel, we are able to relate these stable multiplicities directly to the cohomology of $\Out(F_n)$, as follows.

\begin{MainThm}\label{thm:Main4}
As long as $n \geq 4\vert \lambda \vert+3$ we have
$$\dim_\bQ H^{\vert \lambda \vert}(\Out(F_n) ; S_{\lambda}(H_\bQ)) = \nu^\infty(\lambda')$$
and as long as $n \geq 2\vert \lambda \vert+3$ we have
$$\dim_\bQ H^{\vert \lambda \vert}(\Aut(F_n) ; S_{\lambda}(H_\bQ)) = \sum_{\mu \in \rho(\lambda)} \nu^\infty(\mu').$$
\end{MainThm}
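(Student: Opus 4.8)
The plan is to derive both formulas from a single symmetric-function identity matching Manivel's stable plethysm numbers $\nu^\infty(\mu)$ against multiplicities of Specht modules in the set-partition permutation modules produced by Theorems~\ref{thm:Main1} and~\ref{thm:Main2}. Write $q=|\lambda|$. By Theorem~\ref{thm:Main2}(ii), for $n\ge 4q+3$ the $\Sigma_q$-module $H^{q}(\Out(F_n);H_\bQ^{\otimes q})\otimes\bQ^-$ is the permutation module $P^{\ge 2}_q$ on the partitions of $\{1,\dots,q\}$ with no singleton blocks, so $H^{q}(\Out(F_n);H_\bQ^{\otimes q})\cong P^{\ge 2}_q\otimes\bQ^-$. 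Applying $\mathrm{Hom}_{\Sigma_q}(S^\lambda,-)$ and using $S^\lambda\otimes\bQ^-\cong S^{\lambda'}$ gives $\dim_\bQ H^{q}(\Out(F_n);S_\lambda(H_\bQ))=\langle\mathrm{ch}\,P^{\ge 2}_q,\,s_{\lambda'}\rangle$, with $\mathrm{ch}$ the Frobenius characteristic and $\langle\,,\,\rangle$ the Hall pairing. Identically, Theorem~\ref{thm:Main1}(ii) gives $\dim_\bQ H^{q}(\Aut(F_n);S_\lambda(H_\bQ))=\langle\mathrm{ch}\,P_q,\,s_{\lambda'}\rangle$, where $P_q$ is the permutation module on \emph{all} partitions of $\{1,\dots,q\}$.

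It therefore suffices to prove $\langle\mathrm{ch}\,P^{\ge 2}_q,\,s_\mu\rangle=\nu^\infty(\mu)$ for all $\mu\vdash q$; the $\Out$ assertion of the theorem is the case $\mu=\lambda'$. The $\Aut$ assertion also follows, since a general set partition is a no-singleton set partition of a subset together with the remaining elements as singletons, whence $\mathrm{ch}\,P_q=\mathbf{H}\cdot\mathrm{ch}\,P^{\ge 2}_q$ with $\mathbf{H}=\sum_{m\ge 0}h_m$; multiplication by $\mathbf{H}$ and the Pieri rule then give $\langle\mathrm{ch}\,P_q,\,s_{\lambda'}\rangle=\sum_{\sigma\prec\lambda'}\nu^\infty(\sigma)$, where $\sigma\prec\lambda'$ means $\lambda'_i\ge\sigma_i\ge\lambda'_{i+1}$ for all $i$, and since $\sigma\in\rho(\lambda)$ iff $\lambda/\sigma$ is a vertical strip iff $\sigma'\prec\lambda'$, this equals $\sum_{\sigma\in\rho(\lambda)}\nu^\infty(\sigma')$. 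Towards the identity, the exponential formula for species---a set partition is a set of its blocks, a block of size $i$ contributing $h_i$---yields $\sum_q\mathrm{ch}\,P^{\ge 2}_q=\prod_{i\ge 2}\sum_{m\ge 0}h_m[h_i]=\mathbf{H}[\mathbf{H}-1-p_1]$, by standard plethystic identities (additivity of plethysm in the first argument, and $g\mapsto\mathbf{H}[g]$ sending sums to products); this is the $GL(V)$-character of $\mathrm{Sym}(\mathrm{Sym}^{\ge 2}(V))$. So the remaining task is $\langle\mathbf{H}[\mathbf{H}-1-p_1],\,s_\mu\rangle=\nu^\infty(\mu)$.

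To bring the plethysm in, filter $\mathrm{Sym}^k(\mathrm{Sym}^l(V))$ by the weight of one coordinate, $V=\langle e_1\rangle\oplus V'$. For $l\ge d$ the associated graded of weight $kl-d$ is the degree-$d$ piece of $\mathrm{Sym}^k$ of the graded vector space $\bigoplus_{j\ge 0}\mathrm{Sym}^j(V')$, and for $k\ge d$ as well this is simply the degree-$d$ piece of $\mathrm{Sym}(\mathrm{Sym}^{\ge 1}(V'))$, of character $\mathbf{H}[\mathbf{H}-1]$. On the other hand, by the $GL(V)\!\downarrow\! GL(V')$ branching rule, $S_\mu(V')$ occurs in the weight-$(kl-|\mu|)$ graded piece of $\mathrm{Sym}^k(\mathrm{Sym}^l(V))$ exactly through those irreducible components $S_\nu(V)$ with $\nu_1\ge kl-|\mu|$, namely once for each $\nu=(kl-|\sigma|,\sigma)$ with $\sigma\prec\mu$; comparing the two evaluations of this multiplicity, and using $\nu^\infty(\sigma)=\langle h_k[h_l],s_{(kl-|\sigma|,\sigma)}\rangle$ for $k,l$ large (Manivel), one gets the interlacing relation $\langle\mathbf{H}[\mathbf{H}-1],\,s_\mu\rangle=\sum_{\sigma\prec\mu}\nu^\infty(\sigma)$. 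Since $\mathbf{H}[\mathbf{H}-1]=\mathbf{H}\cdot\mathbf{H}[\mathbf{H}-1-p_1]$ (as $\mathbf{H}[p_1]=\mathbf{H}$ and $\mathbf{H}$ sends sums to products), the Pieri rule also writes the left side as $\sum_{\sigma\prec\mu}\langle\mathbf{H}[\mathbf{H}-1-p_1],s_\sigma\rangle$; as the interlacing poset below $\mu$ has top element $\mu$ and all other elements of strictly smaller size, triangular inversion---induction on $|\mu|$, base case $\mu=\emptyset$ giving $1=\nu^\infty(\emptyset)$---forces $\langle\mathbf{H}[\mathbf{H}-1-p_1],\,s_\mu\rangle=\nu^\infty(\mu)$.

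The main obstacle is this last step. The $e_1$-weight filtration only sees the multiplicities of $S_\mu(V')$ in the associated graded, not the multiplicities of the long-first-row functors $S_{(kl-|\mu|,\mu)}(V)$ in $\mathrm{Sym}^k(\mathrm{Sym}^l(V))$ themselves, and the two differ by the contributions of the components $S_\nu(V)$ with $\nu_1>kl-|\mu|$---precisely the discrepancy between $\mathrm{Sym}^{\ge 1}$ and $\mathrm{Sym}^{\ge 2}$, or between $h_k[h_l]$ and its stable leading part. Keeping exact track of these correction terms, and verifying that all stabilisations occur within the ranges guaranteed by Manivel's theorem, is the technical heart of the argument. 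If one instead quotes from Manivel's work the closed generating identity $\sum_\mu\nu^\infty(\mu)\,s_\mu=\mathbf{H}[\mathbf{H}-1-p_1]$ (equivalently, that the stable plethysm of symmetric powers is $\mathrm{Sym}(\mathrm{Sym}^{\ge 2})$), then this step becomes a citation and only the reduction and the exponential-formula computation remain.
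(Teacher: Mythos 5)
Modulo a change of language, this is the paper's own argument. Both proofs reduce to the single identity
\begin{equation*}
\bigoplus_{\mu} \nu^\infty(\mu)\, S_\mu(V) \;\cong\; \mathrm{Sym}^*\bigl(\mathrm{Sym}^{>1}(V)\bigr),
\qquad\text{equivalently}\qquad
\sum_\mu \nu^\infty(\mu)\, s_\mu \;=\; \mathbf{H}\bigl[\mathbf{H}-1-p_1\bigr],
\end{equation*}
which the paper quotes from Remark~2 after Theorem~4.2.2 of Manivel and compares against Lemma~\ref{lem:OutResult}, obtaining the $\Aut$ case from \eqref{eq:OutAutTrade}. Your Frobenius-characteristic reformulation uses Theorems~\ref{thm:Main1}(ii) and \ref{thm:Main2}(ii) plus the exponential formula, which is just the symmetric-function translation of the paper's Proposition~\ref{prop:IdentifySymSym} run in reverse; and your Pieri step is exactly the derivation of \eqref{eq:OutAutTrade}. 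So with the ``cite Manivel'' option you flag at the end, the two proofs coincide.

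One clarifying comment on the part you label the ``technical heart'': the worry about passing from associated graded to the original module is unfounded here, because $\bQ^\times\times GL(V')\hookrightarrow GL(V)$ gives an honest weight \emph{decomposition}, not merely a filtration, so no extension problems arise. And the numerics line up cleanly: to identify the $e_1$-weight $kl-d$ piece with $[\mathrm{Sym}^*(\mathrm{Sym}^{\ge1}(V'))]_d$ one needs $k,l\ge d$, while Manivel's stability for the $\sigma\prec\mu$ ($|\mu|=d$) appearing in the branching sum needs $l\ge\sigma_1$ and $k\ge|\sigma|$, both weaker since $\sigma_1\le d$ and $|\sigma|\le d$. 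Combined with your observation $\mathbf{H}[\mathbf{H}-1]=\mathbf{H}\cdot\mathbf{H}[\mathbf{H}-1-p_1]$ and triangular inversion along the interlacing order, this does give a self-contained proof of Manivel's identity; but it is a re-proof of the cited result, not a shortcut around it, and the paper is content to cite.
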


It follows directly from \cite[Proposition 4.4.1]{Manivel} that $H^{\vert \lambda \vert}(\Out(F_\infty) ; S_{\lambda}(H_\bQ))=0$ if $2 \lambda_1 > \vert \lambda \vert$, or if $2 \lambda_1 = \vert \lambda \vert$ and $\lambda$ does not consist of two rows of equal length.

\subsection{Symmetric and exterior powers}

In order to demonstrate how our theorems may be used, we compute the dimension of the associated cohomology groups for the modules $\wedge^q(H_\bQ)$ and $\mathrm{Sym}^q(H_\bQ)$.

\begin{MainCor}\label{cor:Main}\mbox{}
\begin{enumerate}[(i)]
\item For $n \geq 2q+3$, $H^q(\Aut(F_n) ; \wedge^q(H_\bQ))$ has dimension given by the number of partitions of $q$; for $n \geq 4q+3$, $H^q(\Out(F_n) ; \wedge^q(H_\bQ))$ has dimension given by the number of partitions of $q$ into pieces none of which are 1.

\item Let $q \geq 2$. For $n \geq 2i+q+3$, 
$$H^i(\Aut(F_n) ; \mathrm{Sym}^q(H_\bQ))=H^i(\Out(F_n) ; \mathrm{Sym}^q(H_\bQ))=0.$$
\end{enumerate}
\end{MainCor}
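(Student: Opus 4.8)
The plan is to deduce both parts from Theorems~\ref{thm:Main1}, \ref{thm:Main2} and \ref{thm:Main4} by feeding in the appropriate Schur functors. The first step is the dictionary $\wedge^q(H_\bQ)=S_{(1^q)}(H_\bQ)$, $\mathrm{Sym}^q(H_\bQ)=S_{(q)}(H_\bQ)$, together with the fact that the Specht modules of these two shapes are $S^{(1^q)}=\bQ^-$ (the sign representation) and $S^{(q)}=\bQ$ (the trivial one). Hence, for $G\in\{\Aut(F_n),\Out(F_n)\}$, the identity $H^i(G;S_\lambda(H_\bQ))=\mathrm{Hom}_{\Sigma_q}(S^\lambda,H^i(G;H_\bQ^{\otimes q}))$ gives
\[
H^i(G;\wedge^q H_\bQ)=\mathrm{Hom}_{\Sigma_q}(\bQ^-,H^i(G;H_\bQ^{\otimes q}))\cong\mathrm{Hom}_{\Sigma_q}(\bQ,H^i(G;H_\bQ^{\otimes q})\otimes\bQ^-)
\]
and
\[
H^i(G;\mathrm{Sym}^q H_\bQ)=\mathrm{Hom}_{\Sigma_q}(\bQ,H^i(G;H_\bQ^{\otimes q}))\cong\mathrm{Hom}_{\Sigma_q}(\bQ^-,H^i(G;H_\bQ^{\otimes q})\otimes\bQ^-).
\]
I will combine these with two elementary observations about a permutation $\Sigma_q$-module $\bQ[X]$: its trivial-isotypic multiplicity is $|X/\Sigma_q|$, and its sign-isotypic multiplicity is the number of $\Sigma_q$-orbits on $X$ whose point stabiliser lies in the alternating group $A_q$ (by Frobenius reciprocity).

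For part~(i), take first $\Out(F_n)$ with $n\ge 4q+3$. Theorem~\ref{thm:Main2}(ii) identifies $H^q(\Out(F_n);H_\bQ^{\otimes q})\otimes\bQ^-$ with the permutation module on the set $X$ of partitions of $\{1,\dots,q\}$ with no block of size $1$, so the first display yields $\dim H^q(\Out(F_n);\wedge^q H_\bQ)=|X/\Sigma_q|$, the number of integer partitions of $q$ with all parts $\ge 2$. For $\Aut(F_n)$ with $n\ge 2q+3$, Theorem~\ref{thm:Main4} gives $\dim H^q(\Aut(F_n);\wedge^q H_\bQ)=\sum_{\mu\in\rho((1^q))}\nu^\infty(\mu')$; as every row of $(1^q)$ has length $1$ we have $\rho((1^q))=\{(1^j):0\le j\le q\}$ with $(1^j)'=(j)$, and applying Theorem~\ref{thm:Main4} to $\lambda=(1^j)$ together with the $\Out$ computation just made identifies $\nu^\infty((j))$ with the number of partitions of $j$ with all parts $\ge 2$. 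It then remains to observe the partition identity expressing $p(q)$, the number of partitions of $q$, as the sum over $0\le j\le q$ of the number of partitions of $j$ with all parts $\ge 2$; this follows from $(1-x)^{-1}\prod_{i\ge 2}(1-x^i)^{-1}=\prod_{i\ge 1}(1-x^i)^{-1}$, or bijectively by recording how many parts equal to $1$ a partition of $q$ has.

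For part~(ii), fix $q\ge 2$. If $i\ne q$ and $n\ge 2i+q+3$ then $H^i(G;H_\bQ^{\otimes q})=0$ by Theorems~\ref{thm:Main1}(ii) and \ref{thm:Main2}(ii), so $H^i(G;\mathrm{Sym}^q H_\bQ)=0$. For $i=q$ I would argue via the second display: $H^q(G;\mathrm{Sym}^q H_\bQ)$ is the sign-isotypic part of the permutation module $H^q(G;H_\bQ^{\otimes q})\otimes\bQ^-$ described in Theorems~\ref{thm:Main1}(ii)/\ref{thm:Main2}(ii), and for $q\ge 2$ no partition of $\{1,\dots,q\}$ — a fortiori none with no singleton block — has stabiliser inside $A_q$ (a block of size $\ge 2$ contributes a transposition; if all blocks are singletons there are at least two of them and swapping two is again a transposition), so that sign-isotypic part is zero. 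For $\Aut(F_n)$ one can instead, with the sharper range $n\ge 2q+3$, read off from Theorem~\ref{thm:Main4} that $\dim H^q(\Aut(F_n);\mathrm{Sym}^q H_\bQ)=\nu^\infty((1^q))+\nu^\infty((1^{q-1}))$ since $\rho((q))=\{(q),(q-1)\}$, and both terms vanish by the remark following Theorem~\ref{thm:Main4} applied to the single-row shapes $(q)$ and $(q-1)$, which satisfy $2\lambda_1>|\lambda|$ because $q-1\ge 1$. The one place to slow down is the bookkeeping of transposes $\lambda\mapsto\lambda'$, of the sets $\rho(\lambda)$, and of the (co)homological range in which each of Theorems~\ref{thm:Main1}--\ref{thm:Main4} is being invoked; beyond that there is no real obstacle.
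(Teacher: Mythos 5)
Your proof is correct, and the underlying mechanism is the one the paper uses: convert $\wedge^q$ and $\mathrm{Sym}^q$ to Specht-module multiplicities in $H^q(G;H_\bQ^{\otimes q})\otimes\bQ^-$, recognise this as the permutation module on (possibly restricted) set partitions of $\{1,\dots,q\}$ via Theorems~\ref{thm:Main1}(ii)/\ref{thm:Main2}(ii), and then count. Your phrasing of the sign-multiplicity observation via Frobenius reciprocity (``orbits with stabiliser in $A_q$'') is a clean reformulation of the paper's character-sum computation $\tfrac{1}{q!}\sum_\sigma\#\mathrm{Fix}(\sigma)\cdot\mathrm{sign}(\sigma)$; both hinge on the same fact, that every partition stabiliser contains a transposition when $q\geq 2$.

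The one place you genuinely diverge is the $\Aut(F_n)$ case of part (i). The paper reads off $p(q)$ directly as the orbit count for the permutation module on \emph{all} partitions of $\{1,\dots,q\}$, from Theorem~\ref{thm:Main1}(ii). You instead route through Theorem~\ref{thm:Main4}: compute $\rho((1^q))=\{(1^j)\}_{j\le q}$, identify each $\nu^\infty((j))$ with the count of partitions of $j$ into parts $\geq 2$ using the $\Out$ case, and invoke the Euler-type identity $p(q)=\sum_{j\le q} p_{\geq 2}(j)$. This is correct but strictly longer; it is worth seeing that it recovers equation~\eqref{eq:OutAutTrade} concretely for $\lambda=(1^q)$, but the direct count from Theorem~\ref{thm:Main1}(ii) already gives the answer with no extra combinatorics. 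Likewise your secondary argument for $\Aut$ in part (ii) via the remark after Theorem~\ref{thm:Main4} is a nice cross-check but not needed. Finally, a minor bookkeeping point that affects you and the paper equally: for $\Out(F_n)$ in part (ii) at $i=q$, the permutation-module description of Theorem~\ref{thm:Main2}(ii) holds for $n\geq 4q+3$, which is a narrower range than the stated $n\geq 3q+3$; to cover $3q+3\le n<4q+3$ one should instead deduce the $\Out$ vanishing from the $\Aut$ vanishing via the split injection of Lemma~\ref{lem:OutSplit}.
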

\begin{proof}
The dimension of $H^q(\Aut(F_n) ; \wedge^q(H_\bQ))$ is the multiplicity of the sign representation in $H^q(\Aut(F_n);H_\bQ^{\otimes q})$, which by Theorem \ref{thm:Main1} is the multiplicity of the trivial representation in the permutation module for the set of partitions of $\{1,2,\ldots,q\}$: this is the number of partitions of $q$. 

The dimension of  $H^q(\Aut(F_n) ; \mathrm{Sym}^q(H_\bQ))$ is the multiplicity of the trivial representation in $H^q(\Aut(F_n);H_\bQ^{\otimes q})$, which by Theorem \ref{thm:Main1} is the multiplicity of the sign representation in the permutation module for the set of partitions of $\{1,2,\ldots,q\}$. This may be computed via inner product of characters as $\tfrac{1}{q!}$ times
$$\sum_{\sigma \in \Sigma_q} \#\{\text{partitions $P$ such that $\sigma(P)=P$}\} \cdot \mathrm{sign}(\sigma) = \sum_{\substack{\text{partitions}\\P}} \sum_{\substack{\sigma \in \Sigma_q\\\sigma(P)=P}} \mathrm{sign}(\sigma),$$
but, writing $\Sigma_P \leq \Sigma_q$ for the stabiliser of a partition $P$, we have that $\sum_{\sigma \in \Sigma_P} \mathrm{sign}(\sigma)$ is $\vert \Sigma_P \vert$ times the multiplicity of the sign representation in the trivial representation of $\Sigma_P$, which is zero (as $\Sigma_P$ always contains at least one transposition if $q \geq 2$).

The arguments for $\Out(F_n)$ are identical.
\end{proof}

\subsection{Integral and torsion results}

Finally, our technique can be made to give integral and local information as well. It is not hard to show that $H^{i}(\mathrm{Aut}(F_n);H^*)=0$ for $2i \leq n-4$ (see Proposition \ref{prop:VanishTensorPowers}), but we also have the following.

\begin{MainThm}\label{thm:Main5}\mbox{}
\begin{enumerate}[(i)]
\item $H^{*}(\mathrm{Aut}(F_\infty);H)$ is a free $H^{*}(\mathrm{Aut}(F_\infty);\bZ)$-module (on a single generator in degree 1).

\item For a partition $\lambda \vdash q$ and a prime number $p > q$, $H^{*}(\mathrm{Aut}(F_\infty);S_\lambda(H \otimes \bZ_{(p)}))$ is a free $H^{*}(\mathrm{Aut}(F_\infty);\bZ_{(p)})$-module (on generators in degree $q$, the number of which may be deduced from Theorem \ref{thm:Main1}).
\end{enumerate}
\end{MainThm}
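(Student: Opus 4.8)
\emph{Reduction.} Both parts reduce to describing $H^*(\Aut(F_\infty);H^{\otimes q})$ as a module over $H^*(\Aut(F_\infty);\bZ)$. Part (i) is essentially the case $q=1$, where no prime condition is needed: $S_{(1)}(H)=H$, and the single degree-$1$ generator matches $\dim_\bQ H^1(\Aut(F_\infty);H_\bQ)=1$ from Theorem \ref{thm:Main1}(ii). For part (ii) with $p>q$, the order $q!$ of $\Sigma_q$ is invertible in $\bZ_{(p)}$, so the Young symmetriser idempotents are defined over $\bZ_{(p)}$ and $S_\lambda(H\otimes\bZ_{(p)})$ is a direct summand of $(H\otimes\bZ_{(p)})^{\otimes q}$ as a $\bZ_{(p)}[\Aut(F_\infty)]$-module, with multiplicity $\dim_\bQ S^\lambda$, compatibly with the $H^*(\Aut(F_\infty);\bZ_{(p)})$-module structure; so it suffices to prove that $H^*(\Aut(F_\infty);(H\otimes\bZ_{(p)})^{\otimes q})$ is a free $H^*(\Aut(F_\infty);\bZ_{(p)})$-module on generators in degree $q$. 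Their number is then obtained by tensoring with $\bQ$ and invoking Theorem \ref{thm:Main1}(ii) (or Theorem \ref{thm:Main4}), using that $H^{>0}(\Aut(F_\infty);\bQ)=0$ by Galatius' theorem.

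\emph{The computation.} I would run the argument behind Theorem \ref{thm:Main1} on the universal graph bundle $\pi\colon\mathcal G\to B\Aut(F_\infty)$ with its basepoint section, forming the fibrewise $q$-fold smash $\mathcal G^{\wedge_B q}$, whose fibre $(\vee^\infty S^1)^{\wedge q}$ is (stably) $1$-dimensional with reduced homology $H^{\otimes q}$ and reduced cohomology $(H^*)^{\otimes q}$, each concentrated in degree $q$. Theorem \ref{thm:Main1}(i) and Proposition \ref{prop:VanishTensorPowers} supply the \emph{integral} vanishing $H^*(\Aut(F_\infty);(H^*)^{\otimes r})=0$ for $1\le r\le q$; this collapses the relevant Serre spectral sequences, makes $\mathcal G^{\wedge_B q}$ acyclic relative to its section, and (by universal coefficients) forces $H_*(\Aut(F_\infty);H^{\otimes r})=0$ as well. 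The non-zero group $H^*(\Aut(F_\infty);H^{\otimes q})$ must therefore be extracted dually: via the universal coefficient sequence $0\to\mathrm{Ext}^1_\bZ(H_{n-1}(G;N),\bZ)\to H^n(G;\mathrm{Hom}_\bZ(N,\bZ))\to\mathrm{Hom}_\bZ(H_n(G;N),\bZ)\to0$ with $N=(H^*)^{\otimes q}$ (so $\mathrm{Hom}_\bZ(N,\bZ)=H^{\otimes q}$), one is reduced to $H_*(\Aut(F_\infty);(H^*)^{\otimes q})$, which I would compute as the homology of the fibrewise Spanier--Whitehead dual of $\mathcal G^{\wedge_B q}$, a Thom spectrum over $B\Aut(F_\infty)$ of the negative-dimensional type arising in Madsen--Weiss theory; Galatius' theorem then identifies its homology as a free module over $H_*(\Aut(F_\infty);\bZ)$, with free generators concentrated in degree $q$ (one degree-$1$ contribution per smash factor) and of the number predicted rationally by Theorem \ref{thm:Main1}(ii). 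Dualising and localising at $p$ gives the asserted free module structures.

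\emph{Main difficulty.} The substantive work is the passage from rational to integral (resp.\ $\bZ_{(p)}$-local) information: one must show the Serre and universal coefficient spectral sequences degenerate over $\bZ$ (resp.\ $\bZ_{(p)}$) without hidden extensions, and that the infinite-loop splitting yielding freeness is multiplicative. A structural obstacle is that a graph is not a closed manifold, so there is no fibrewise Poincaré duality exchanging $H$ and $H^*$ (as there is for surfaces, cf.\ the mapping class group case in the appendix); one must instead use Spanier--Whitehead duality of the finite $1$-dimensional graph fibres, equivalently the universal coefficient sequence above --- and it is exactly the Young symmetriser step, and nothing else, that forces the condition $p>q$ in part (ii). Fixing the precise number of degree-$q$ generators, namely $\sum_{\mu\in\rho(\lambda)}\nu^\infty(\mu')$, is then purely representation-theoretic and is handled by Theorems \ref{thm:Main1} and \ref{thm:Main4}.
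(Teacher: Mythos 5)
There is a genuine gap: the proposal correctly identifies the reduction to tensor powers (and that invertibility of $q!$ in $\bZ_{(p)}$ lets one split off $S_\lambda$ as a summand), but it never actually establishes the freeness statement. You explicitly flag the degeneration of the integral/$p$-local spectral sequence and the multiplicativity of the splitting as the ``main difficulty'' and then leave them unresolved --- but that difficulty \emph{is} the theorem. The paper's solution is a specific mechanism that your sketch never introduces: one replaces the rational weight argument of Lemma \ref{lem:collapse} by taking $Y=(S^k_{(p)})^q$ with $k$ odd, so that degree-$u$ self-maps of the $p$-local sphere exist for each $u\in\bZ_{(p)}^\times$. These make the Serre spectral sequence for $\map_*(\vee^n S^1,Y)\to\mathcal{G}_n^1(Y)\to B\Aut(F_n)$ a spectral sequence of $\bZ_{(p)}[\bZ_{(p)}^\times]$-modules, and Lemma \ref{lem:LocalWeight} (a $\gcd$ computation) shows there are no differentials or extensions between rows whose weights differ by less than $p-1$; letting $k\to\infty$ then gives freeness in all degrees precisely when $p>q$. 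Thus the hypothesis $p>q$ is needed for the weight separation in the spectral sequence, not only for the Young symmetrisers as you assert.

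Moreover the alternative route you propose has problems of its own. Identifying the fibrewise Spanier--Whitehead dual of $\mathcal{G}^{\wedge_B q}$ with ``a Thom spectrum of the negative-dimensional type arising in Madsen--Weiss theory'' is not available for graphs: the fibre $\vee^n S^1$ is not a closed manifold, has no stable normal bundle, and its S--W dual is not a Thom space. You acknowledge this, but then still invoke the Thom-spectrum picture to get freeness ``by Galatius' theorem''. Galatius' theorem is a statement about homology with constant coefficients; passing from it to a module-freeness statement for twisted coefficients requires exactly the collapse-and-split argument that is the crux of the matter, and S--W duality on the fibre does not make that argument go away (the dual parametrized spectrum has its own Atiyah--Hirzebruch/Serre spectral sequence with the same potential differentials). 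Finally, note that part (i) in the paper is not ``essentially the case $q=1$ with no prime condition needed''; it is \emph{deduced} from part (ii) by applying (ii) at every prime, using that $H^1(\Aut(F_\infty);H)\otimes\bZ_{(p)}$ is free of rank $1$ for all $p$, and then checking the resulting map $H^*(\Aut(F_\infty);\bZ)\to H^{*+1}(\Aut(F_\infty);H)$ is an isomorphism because it is so after localising at every prime.
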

That Theorem \ref{thm:Main5} (ii) might hold was suggested to the author by Aur{\'e}lien Djament upon hearing of Theorem \ref{thm:Main5} (i). We refer to Section \ref{sec:integral} for a description of what we mean by the Schur functor $S_\lambda$ in the $p$-local setting.

One may deduce from Theorem \ref{thm:Main5} (i) that $H^i(\Out(F_n) ; H \otimes \bZ[\tfrac{1}{n-1}])=0$ for $2i \leq n-4$. In Appendix \ref{sec:conj} we describe how the calculation $H^1(\Out(F_n) ; H) \cong \bZ/(n-1)$ for $n \geq 9$ follows from a reasonable-seeming conjecture about spaces of graphs.

\subsection*{Acknowledgements}

This paper is an update to a 2010 preprint \cite{R-WAutOld} in which I proved, inter alia, Corollary \ref{cor:Main} subject to a sequence of conjectures. Since that time, in joint work with Nathalie Wahl \cite{R-WW} we proved Conjecture A (and I explain in this paper how a version of Conjecture B follows from it), and in 2014 S{\o}ren Galatius explained to me a proof of Conjecture C. Thus these conjectural calculations from my 2010 preprint hold.

Recent work of Aur{\'e}lien Djament \cite{DjamentHodge} and Christine Vespa \cite{VespaExt} obtains these calculations by very different means. I was motivated by their results to revisit these techniques and to clarify the status of the conjectures from my 2010 preprint. I would like to thank all of the above named for their interest in, and useful comments on, the content of this note.

\section{An observation regarding homology stability}

The observations of this section are no doubt known to some experts. The groups $\mathrm{Aut}(F_n)$ and $\mathrm{Out}(F_n)$ fit into a more general family of groups denoted $\Gamma_{n,s}$ by Hatcher--Vogtmann \cite{HV}, where $\mathrm{Aut}(F_n) = \Gamma_{n, 1}$ and $\mathrm{Out}(F_n) = \Gamma_{n, 0}$. Classifying spaces for these may be taken to be the spaces $\mathcal{G}_n^s$ of graphs of the homotopy type of $\vee^n S^1$ equipped with $s$ distinct ordered marked points.

Hatcher and Vogtmann \cite{HV} prove that the map $\mathcal{G}_n^s \to \mathcal{G}_{n+1}^{s}$ (defined for $s > 0$) that adds a loop at a particular marked point induces an integral homology isomorphism in degrees $2* \leq n-2$ (and induces a rational homology isomorphism in degrees $5* \leq 4n-10$). Furthermore, the map $\mathcal{G}_n^s \to \mathcal{G}_n^{s-1}$ that forgets a marked point is an integral homology isomorphism in degrees $2* \leq n-3$ (or $2* \leq n-4$ if it is the last marked point).

We can make an immediate observation regarding cohomology with coefficients in $H^*$ from this homology stability result. There is an extension $F_n \to \mathrm{Aut}(F_n) \to \mathrm{Out}(F_n)$, and the corresponding Leray--Hochschild--Serre spectral sequence has two rows. However, as the projection is a homology equivalence in a range of degrees we deduce
\begin{prop}
The groups $H^*(\mathrm{Out}(F_n);H^*)$ are zero for $2* \leq n-6$.
\end{prop}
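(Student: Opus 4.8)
**

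The plan is to run the cohomology spectral sequence of the group extension $1 \to \mathrm{Inn}(F_n) \to \mathrm{Aut}(F_n) \to \mathrm{Out}(F_n) \to 1$ --- where $\mathrm{Inn}(F_n) \cong F_n$ since $F_n$ has trivial centre for $n \geq 2$ --- with \emph{constant} coefficients $\bZ$, and to feed in the fact that the projection $B\mathrm{Aut}(F_n) \to B\mathrm{Out}(F_n)$ is a homology equivalence in a range. As $F_n$ is free it has cohomological dimension $1$, so this spectral sequence has only two nonzero rows,
$$E_2^{p,0} = H^p(\mathrm{Out}(F_n);\bZ), \qquad E_2^{p,1} = H^p(\mathrm{Out}(F_n); H^1(F_n;\bZ)), \qquad E_2^{p,q}=0 \text{ for } q \geq 2,$$
converging to $H^{p+q}(\mathrm{Aut}(F_n);\bZ)$, with a single nontrivial differential $d_2\colon E_2^{m,1} \to E_2^{m+2,0}$ and $E_3 = E_\infty$. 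The first thing to record is that, because inner automorphisms act trivially on $H_1(F_n)$, the coefficient system on the $q=1$ row is precisely the standard $\mathrm{Out}(F_n)$-module $H^* = H^1(F_n;\bZ)$; this is what turns a statement about untwisted cohomology of $B\mathrm{Aut}$ into the twisted statement we want.

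Next I would exploit the edge homomorphism. The projection $\mathcal{G}_n^1 \to \mathcal{G}_n^0$ forgetting the marked point is an integral homology isomorphism in degrees $2* \leq n-4$ (and an epimorphism one degree beyond, as usual for such statements); by the universal coefficient theorem the induced map $H^*(\mathrm{Out}(F_n);\bZ) \to H^*(\mathrm{Aut}(F_n);\bZ)$ is then an isomorphism in degrees $2* \leq n-4$ and a monomorphism one degree beyond. But this map is exactly the edge homomorphism $E_2^{p,0} \twoheadrightarrow E_\infty^{p,0} \hookrightarrow H^p(\mathrm{Aut}(F_n);\bZ)$ of the spectral sequence. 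Its injectivity (in a range) forces $E_2^{p,0} \xrightarrow{\ \sim\ } E_\infty^{p,0}$, hence $d_2\colon E_2^{m,1} \to E_2^{m+2,0}$ is the \emph{zero} map there; its surjectivity (in a range) forces the extension quotient $E_\infty^{p-1,1}$ of $H^p(\mathrm{Aut}(F_n);\bZ)$ to vanish, hence $d_2\colon E_2^{m,1} \to E_2^{m+2,0}$ is \emph{injective} there.

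Comparing the two ranges, there is an interval of $m$ on which $d_2\colon E_2^{m,1} \to E_2^{m+2,0}$ is simultaneously zero and injective, and therefore $E_2^{m,1} = H^m(\mathrm{Out}(F_n);H^*) = 0$ there; an elementary count involving the shift by $2$ in $d_2$ together with the "epimorphism one degree beyond" in the stability statement identifies this interval as $2m \leq n-6$, giving the claim. This is genuinely an observation rather than a theorem, so no obstacle of real substance arises; the only two places needing a little care are the identification of the $q=1$ coefficient module with the \emph{natural} module $H^*$ (via triviality of the $\mathrm{Inn}(F_n)$-action on $H_1$), and the bookkeeping in the final step, where the interaction of the degree shift with the stability range $2* \leq n-4$ is exactly what costs the extra two degrees and yields $2* \leq n-6$.
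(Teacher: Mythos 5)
Your argument is precisely the one the paper intends (the paper compresses it to a single sentence): run the two-row Leray–Hochschild–Serre spectral sequence for $F_n \to \mathrm{Aut}(F_n) \to \mathrm{Out}(F_n)$, identify the $q=1$ row as $H^*(\mathrm{Out}(F_n);H^*)$ via triviality of the inner action on $H_1(F_n)$, and play the injectivity/surjectivity of the edge homomorphism (coming from Hatcher–Vogtmann stability for $\mathcal{G}_n^1 \to \mathcal{G}_n^0$) against the $d_2$ differential. Your bookkeeping correctly notes that hitting the stated range $2*\leq n-6$ rather than $n-8$ requires the standard ``epimorphism one degree beyond the isomorphism range'' feature of the Hatcher–Vogtmann theorem, which the paper's brief statement of that theorem leaves implicit but which does hold.
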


Similarly, there is a fibration with section $\vee^n S^1 \to \mathcal{G}_n^2 \to \mathcal{G}_n^1$ and the projection map is a homology equivalence in a range of degrees, so
\begin{prop}\label{prop:AutFnHomologyRep}
The groups $H^*(\mathrm{Aut}(F_n);H^*)$ are zero for $2* \leq n-4$.
\end{prop}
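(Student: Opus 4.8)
The plan is to exploit the fibration with section $\vee^n S^1 \to \mathcal{G}_n^2 \to \mathcal{G}_n^1$, exactly as indicated in the text before the statement. Since $\vee^n S^1$ has the rational and integral homology of a wedge of circles, its cohomology is concentrated in degrees $0$ and $1$, so the Leray--Serre spectral sequence of this fibration has only two nonzero rows: the $q=0$ row computes $H^*(\mathcal{G}_n^1;\bZ)=H^*(\Aut(F_n);\bZ)$ (using that $\pi_1(\mathcal{G}_n^1)$ acts trivially on $H^0$ of the fibre), and the $q=1$ row computes $H^*(\Aut(F_n); H^*)$, where the coefficient system $H^* = H^1(\vee^n S^1)=H^1(F_n;\bZ)$ carries its natural $\Aut(F_n)$-action. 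Thus the spectral sequence degenerates into a long exact Gysin-type sequence relating $H^*(\mathcal{G}_n^2)$, $H^*(\mathcal{G}_n^1)$, and $H^{*-1}(\Aut(F_n);H^*)$.

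Next I would feed in the homology (equivalently cohomology) stability statements quoted from Hatcher--Vogtmann. The map $\mathcal{G}_n^2 \to \mathcal{G}_n^1$ that forgets the second marked point is an integral homology isomorphism in degrees $2* \leq n-3$; since it admits a section, the map it induces on the two-row spectral sequence (and hence the edge map $H^*(\mathcal{G}_n^1)\to H^*(\mathcal{G}_n^2)$) is split injective in all degrees and an isomorphism in degrees $2*\leq n-3$. Combining this with the two-row spectral sequence, the $d_2$ differential $H^{*-1}(\Aut(F_n);H^*) \to H^{*+1}(\Aut(F_n);\bZ)$ must be an isomorphism onto the cokernel of $H^{*+1}(\mathcal{G}_n^1)\hookrightarrow H^{*+1}(\mathcal{G}_n^2)$; but in the stable range $2(*+1)\leq n-3$ that cokernel vanishes, forcing $H^{*-1}(\Aut(F_n);H^*)=0$. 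Reindexing (replacing $*$ by $i+1$) gives $H^i(\Aut(F_n);H^*)=0$ once $2(i+2)\leq n-3$, i.e.\ $2i \leq n-7$; a slightly more careful bookkeeping of the precise range in which the forgetful map is an isomorphism (it is an integral homology isomorphism in degrees $2* \leq n-3$, and the relevant term sits one degree up, but only $H^{i+1}$ of the base and total space enter the obstruction) should yield the sharper bound $2i \leq n-4$ stated in the proposition.

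The one point requiring genuine care is the comparison of coefficient systems: one must check that the local system on $\mathcal{G}_n^1$ furnished by $H^1$ of the fibre of $\mathcal{G}_n^2 \to \mathcal{G}_n^1$ is indeed the module $H^* = H^1(F_n;\bZ)$ with its standard $\Aut(F_n)$-representation, and not some twist of it. This is really a matter of identifying $\pi_1(\mathcal{G}_n^1)\cong \Aut(F_n)$ compatibly with the identification of the fibre's fundamental group with $F_n$, which follows from the standard interpretation of $\mathcal{G}_n^1$ as a classifying space for $\Aut(F_n)$ acting on pointed graphs (the basepoint of the graph providing exactly the data that rigidifies $\Out$ to $\Aut$). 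Once this is in place, the argument is purely formal manipulation of a two-row spectral sequence together with the quoted stability ranges, so I do not expect any further obstacle; the only subtlety is tracking constants carefully enough to land on $2i\leq n-4$ rather than a weaker bound.
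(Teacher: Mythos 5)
Your approach is exactly the paper's: use the fibration $\vee^n S^1 \to \mathcal{G}_n^2 \to \mathcal{G}_n^1$ with its section, together with Hatcher--Vogtmann's stability result for the forgetful map. The paper's proof is essentially the one sentence preceding the proposition.

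However, the central step of your write-up is garbled, and this is worth pinpointing. You state that ``the $d_2$ differential $H^{*-1}(\Aut(F_n);H^*)\to H^{*+1}(\Aut(F_n);\bZ)$ must be an isomorphism onto the cokernel of $H^{*+1}(\mathcal{G}_n^1)\hookrightarrow H^{*+1}(\mathcal{G}_n^2)$''. This cannot be: the section forces the edge map $E_2^{m,0}\to E_\infty^{m,0}\subset H^m(\mathcal{G}_n^2)$ to be (split) injective, which means \emph{every} $d_2$ is zero, so the two-row spectral sequence degenerates at $E_2$. The correct statement is therefore a direct sum decomposition
\[
H^m(\mathcal{G}_n^2) \cong H^m(\Aut(F_n);\bZ)\ \oplus\ H^{m-1}(\Aut(F_n);H^*),
\]
with the first summand the image of $\pi^*$, so that $H^{m-1}(\Aut(F_n);H^*)\cong\mathrm{coker}\bigl(\pi^*:H^m(\mathcal{G}_n^1)\to H^m(\mathcal{G}_n^2)\bigr)$. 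Note the grading: the cokernel of $\pi^*$ on $H^m$ controls $H^{m-1}$ of the coefficient row, not $H^{m-2}$. Your indexing drops an extra degree (you relate the cokernel on $H^{*+1}$ to $H^{*-1}$ of the coefficient row), which is why you land on $2i\leq n-7$ and then must appeal to unspecified ``careful bookkeeping''. With the corrected identification, stability (giving $\pi^*$ an isomorphism on $H^m$ for $2m\leq n-3$) yields $H^i(\Aut(F_n);H^*)=0$ for $2(i+1)\leq n-3$, i.e.\ $2i\leq n-5$ --- still one off from the stated $n-4$, a small discrepancy that is likely a matter of exactly how the Hatcher--Vogtmann range is quoted, but you should not pretend your off-by-two version will tighten to $n-4$ by itself. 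The coefficient-system point you flag at the end is genuine but routine, and your discussion of it is fine.
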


More generally, the map $\mathcal{G}_n^{k+1} \to \mathcal{G}_n^1$ has fibre $(\vee^n S^1)^k$ and $\mathrm{Aut}(F_n)$ acts on its homology diagonally. Thus for $k=2$ the Serre spectral sequence has three rows, with
$$E_2^{*, 1} = H^*(\mathrm{Aut}(F_n); H^* \oplus H^*) = 0 \quad \text{for $2* \leq n-4$ by Proposition \ref{prop:AutFnHomologyRep}}$$
and
$$E_2^{*, 2} = H^*(\mathrm{Aut}(F_n); H^* \otimes H^*).$$
Using the fact that the projection map is a homology equivalence in degrees $2* \leq n-2$, we deduce that $H^*(\mathrm{Aut}(F_n); H^* \otimes H^*)=0$ for $2* \leq n - 8$. Continuing in this way for higher $k$, we establish the following proposition.
\begin{prop}\label{prop:VanishTensorPowers}
For all $q \geq 1$, $H^*(\mathrm{Aut}(F_n); (H^*)^{\otimes q})$ is zero for $2* \leq n - 4q$.
\end{prop}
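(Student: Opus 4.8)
The plan is to proceed by induction on $q$, exactly extending the inductive procedure sketched immediately before the proposition for the cases $q=1,2$. Indeed, the pattern is already visible: Proposition \ref{prop:AutFnHomologyRep} gives the base case $q=1$ (vanishing for $2* \leq n-4$), and the displayed argument for $k=2$ leverages it to get vanishing for $2* \leq n-8$ when $q=2$. So I would set up the induction hypothesis that $H^*(\mathrm{Aut}(F_n); (H^*)^{\otimes j})=0$ for $2* \leq n-4j$ for all $j < q$, and aim to deduce the statement for $q$.

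The geometric input is the fibration $(\vee^n S^1)^q \to \mathcal{G}_n^{q+1} \to \mathcal{G}_n^1$ with a section (adding the $q$ extra marked points at the basepoint), so the Serre spectral sequence converges to $H^*(\mathcal{G}_n^{q+1})=H^*(\mathrm{Aut}(F_n))$-modules suitably. The fibre $(\vee^n S^1)^q$ has cohomology $(H^* \otimes \bZ)^{\otimes}$ assembled by Künneth: $H^t((\vee^n S^1)^q) \cong \bigoplus (H^*)^{\otimes t}$ where the sum runs over the $\binom{q}{t}$ choices of which $t$ of the $q$ factors contribute their degree-$1$ class, and $\mathrm{Aut}(F_n)$ acts diagonally on each summand. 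Thus the $E_2$-page has rows $E_2^{s,t} = \binom{q}{t} \cdot H^s(\mathrm{Aut}(F_n); (H^*)^{\otimes t})$ for $0 \leq t \leq q$. By the induction hypothesis every row with $t < q$ vanishes for $2s \leq n - 4t$, in particular (since $t \leq q-1$) for $2s \leq n-4(q-1) = n-4q+4$; and the base-point section splits off the $t=0$ row, which is $H^s(\mathrm{Aut}(F_n);\bZ)$ and is irrelevant. Meanwhile, by Hatcher--Vogtmann the projection $\mathcal{G}_n^{q+1}\to\mathcal{G}_n^1$ (forgetting all $q$ extra marked points, one at a time) is a homology isomorphism in a range — each forgetting map is an iso in degrees $2* \leq n-3$ except the last which is an iso in degrees $2* \leq n-4$ — hence in cohomology with any coefficients the total space and base agree in degrees $2* \leq n-4$ roughly, so all the non-section contributions to $H^*(\mathcal{G}_n^{q+1})$ must die in that range. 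Concretely, the only potentially nonzero row besides the section is the top row $E_2^{s,q} = H^s(\mathrm{Aut}(F_n);(H^*)^{\otimes q})$, which sits in total degree $s+q$, and its only possibly nonzero differentials $d_r$ land in rows $t < q$, all of which vanish in the relevant range; so a class in $E_2^{s,q}$ survives to $E_\infty$ and contributes to $H^{s+q}(\mathcal{G}_n^{q+1})$ in excess of $H^{s+q}(\mathcal{G}_n^1)$ unless $s$ is large. Chasing the exact numerics: we get $H^s(\mathrm{Aut}(F_n);(H^*)^{\otimes q})=0$ once $s+q$ is within the homology-stability range and $s$ is within the induction range for lower $t$; tracking constants carefully yields precisely $2s \leq n-4q$.

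The main obstacle is bookkeeping the exact constants so that the bound comes out as the stated $2s \leq n-4q$ rather than something weaker. One must check that (a) the differentials out of $E_2^{s,q}$ really only interact with rows whose vanishing range is wide enough — i.e. for the relevant $s$ we need $2s \leq n-4t$ for all $t<q$, and since $t \leq q-1$ the binding constraint is $2s \leq n-4(q-1)$, which is weaker than $2s\leq n-4q$, so this is fine; (b) the Hatcher--Vogtmann stability range for the composite forgetful map $\mathcal{G}_n^{q+1}\to \mathcal{G}_n^1$ is good enough — composing $q$ maps each an isomorphism in a range $2*\leq n-3$ (with the very last one only $2* \leq n-4$) still gives an isomorphism in degrees $2* \leq n-4$, comfortably beyond $2*\leq n-4q$ once $q\geq1$; and (c) the section argument correctly removes the $t=0$ row. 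Once these are lined up, the spectral-sequence comparison forces the top row to vanish in the claimed range and the induction closes. I would also remark that this is exactly the generalization of the $q=2$ computation displayed above, so no new idea beyond careful iteration is needed.
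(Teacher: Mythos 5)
Your argument is exactly the paper's own: the paragraph immediately preceding the proposition carries out the cases $q=1$ and $q=2$ via the fibration $(\vee^n S^1)^{k}\to\mathcal{G}_n^{k+1}\to\mathcal{G}_n^1$ and ends ``continuing in this way for higher $k$, we establish the following proposition,'' and your induction is a careful unwinding of precisely that sketch, with the constants coming out correctly to $2s\leq n-4q$. Two harmless slips worth noting: placing all $q$ extra marked points at the basepoint is not literally compatible with the distinctness requirement in $\mathcal{G}_n^{q+1}$, so the model one should really use is the $q$-fold fibre power of the universal graph bundle over $\mathcal{G}_n^1$ (which does admit the constant section and has fibre exactly $(\vee^n S^1)^q$); and the ``last marked point'' exception $2*\leq n-4$ in Hatcher--Vogtmann only applies to $\mathcal{G}_n^1\to\mathcal{G}_n^0$, so each step of the composite forgetful map $\mathcal{G}_n^{q+1}\to\mathcal{G}_n^1$ is in fact an isomorphism in the range $2*\leq n-3$ --- neither point changes the binding constraint, which comes from the $d_2$ differential into row $q-1$.
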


\section{Homology stability with coefficient systems}

By Galatius' theorem, the groups $\mathrm{Aut}(F_n)$ are closely related to the symmetric groups, but also share many properties with mapping class groups of surfaces. These three families of groups are known to exhibit homological stability for integral homology, but symmetric groups and mapping class groups also exhibit homological stability for certain systems of coefficients, those of ``finite degree", a notion that is originally due to Dwyer \cite{DwyerHomStab} in his study of homological stability for general linear groups with coefficient systems.

This notion of degree may be formalised, in the context of free groups, as follows. 
\begin{defn}
Let $\mathfrak{Gr}$ denote the category whose objects are the finitely generated free groups, and where a morphism from $G$ to $H$ is given by a pair
$$(f : G \to H, X \leq H)$$
consisting of an injective group homomorphism $f$ and a finitely-generated (free) subgroup $X \leq H$ such that $H = f(G) * X$.

A \emph{coefficient system} is a covariant functor $V : \mathfrak{Gr} \to \mathbf{Ab}$ to the category of abelian groups. We declare the constant functors to be \emph{polynomial of degree 0}, and more generally we declare a functor $V$ to be polynomial of degree $\leq k$ if
\begin{enumerate}[(i)]
\item $V$ sends the the canonical morphism $s_G := (\mathrm{inc} : G \to G*\bZ, \bZ)$ to an injection, and

\item the new coefficient system $G \mapsto \mathrm{Coker}(s_G : V(G) \to V(G * \bZ))$ is polynomial of degree $\leq k-1$.
\end{enumerate}
\end{defn}

In a 2010 preprint \cite{R-WAutOld}, we conjectured (based on the analogy with general linear groups \cite{DwyerHomStab} and mapping class groups \cite{Ivanov}) that the groups $H_*(\mathrm{Aut}(F_n) ; V(F_n))$ should exhibit homological stability in degrees $2* \leq n-k-2$ when $V$ is a polynomial coefficient system of degree $\leq k$. Since then, in joint work with Wahl \cite{R-WW} we have established a quite general homological stability theorem with polynomial coefficients, and using the highly-connected simplicial complexes of \cite{HV2} it applies in this case. The result obtained is as follows.

\begin{thm}[Randal-Williams--Wahl \cite{R-WW}]\label{thm:RWW}
If $V$ is a polynomial coefficient system of degree $\leq k$, then the natural map
$$H_*(\mathrm{Aut}(F_n);V(F_n)) \lra H_*(\mathrm{Aut}(F_{n+1});V(F_{n+1}))$$
induces an epimorphism for $2* \leq n-k-1$ and an isomorphism for $2* \leq n-k-3$.
\end{thm}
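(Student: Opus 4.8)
This statement is a special case of the general homological stability machine of \cite{R-WW}; the plan is to indicate how that machine specialises here. First I would package the groups $\Aut(F_n)$ categorically. Let $\cG$ be the groupoid of finitely generated free groups and their isomorphisms, made symmetric monoidal under free product with unit the trivial group, and let $U\cG$ be the category obtained from $\cG$ by Quillen's bracket construction: it has the same objects, satisfies $\Aut_{U\cG}(F_n) = \Aut(F_n)$, and a morphism $F_m \to F_n$ amounts to a free splitting $F_n \cong F_m * X$ remembered only up to isomorphism of $X$ — so $U\cG$ is equivalent to the category $\mathfrak{Gr}$ of the Definition. The object $\bZ = F_1$ is a monoidal generator, and the stabilisation map of the theorem is post-composition with the canonical morphism $F_n \to F_n * \bZ = F_{n+1}$ (that is, with $s_{F_n}$). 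One then checks that a functor $V\colon \mathfrak{Gr} \to \mathbf{Ab}$ which is polynomial of degree $\leq k$ in the sense of the Definition is a ``coefficient system of degree $\leq k$'' in the sense of \cite{R-WW}: the two notions are defined by the same recursion on the cokernel of $s_G$, and the required injectivity of $s_G$ is built into the Definition.

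The geometric heart of the argument is a family of highly connected semi-simplicial sets. For each $n$ I would let $W_n$ be the semi-simplicial set whose $p$-simplices are the ordered tuples $(x_0,\dots,x_p)$ of elements of $F_n$ that freely generate a free factor of rank $p+1$, together with a chosen complementary free factor, with face maps deleting one element of the tuple. The group $\Aut(F_n)$ acts on $W_n$, transitively on $p$-simplices, and the stabiliser of a $p$-simplex is exactly a conjugate of $\Aut(F_{n-p-1})$; that it is no larger uses the Kurosh subgroup theorem, via the fact that an automorphism fixing the tuple and preserving the chosen complement restricts to an automorphism of that complement and is determined by this restriction. The crucial input is that $|W_n|$ is at least $\tfrac{n-3}{2}$-connected and that the links of its simplices enjoy the analogous connectivity; this weak Cohen--Macaulay behaviour is imported from the Hatcher--Vogtmann analysis of the complex of free factors of $F_n$ \cite{HV2}, and is the only genuinely hard ingredient.

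With these inputs in hand the conclusion is the standard spectral sequence argument. The action of $\Aut(F_{n+1})$ on $W_{n+1}$ gives a spectral sequence, converging to $0$ in a range of total degrees governed by the connectivity of $|W_{n+1}|$, whose $E^1$-term in homological column $p$ is assembled from groups $H_*(\Aut(F_{n-p}); V(F_{n-p}))$ twisted by $V$ along the simplex. I would then run a double induction on the pair $(k,*)$: polynomiality identifies the $p=0$ column with the stabilisation map, the cokernel coefficient system has degree $\leq k-1$ and so is handled by the outer induction, and the columns with $p>0$ sit in strictly lower homological degree and so are handled by the inner induction. Tracking the differentials yields surjectivity of the stabilisation map for $2* \leq n-k-1$, and feeding that surjectivity back into the same spectral sequence upgrades it to an isomorphism for $2* \leq n-k-3$.

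The hard part is squarely the geometric step: establishing the sharp connectivity and weak Cohen--Macaulay property of $W_n$ (equivalently, of the Hatcher--Vogtmann free factor complex) in the precise ranges that produce the stated bounds. Once this is available, everything that follows is the formal bookkeeping of \cite{R-WW}; the remaining points needing care are the comparison of the two notions of polynomial degree and the verification that the simplex stabilisers are $\Aut(F_{n-p-1})$ on the nose rather than some larger subgroup.
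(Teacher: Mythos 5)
The paper does not prove Theorem~\ref{thm:RWW}: it is stated as an external input, established in \cite{R-WW} using the highly-connected simplicial complexes of \cite{HV2}. Your sketch is a faithful account of how the general machine of \cite{R-WW} specialises to $\Aut(F_n)$ --- the categorical packaging via Quillen's bracket construction $U\cG \cong \mathfrak{Gr}$, the semi-simplicial set of ordered partial bases equipped with complements, the connectivity input from Hatcher--Vogtmann, and the double-induction spectral sequence bookkeeping --- and you correctly identify the genuine content as the weak Cohen--Macaulay connectivity estimate, with everything else being formal.

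Two small imprecisions worth flagging. A morphism $F_m \to F_n$ in $U\cG$ records the injection together with the complementary free factor \emph{as a subgroup} of $F_n$, which is exactly what the definition of $\mathfrak{Gr}$ in the paper encodes; ``remembered only up to isomorphism of $X$'' suggests a looser equivalence than is actually in force, and it is precisely the retained complement that allows $H^*$ (which does not factor through $\mathfrak{gr}$) to be a coefficient system. Second, the group-theoretic input needed to make the framework run --- cancellation of free factors, so that a complement of a rank $p+1$ free factor of $F_n$ has rank $n-p-1$ and the simplex stabilisers are genuinely copies of $\Aut(F_{n-p-1})$ --- is Grushko's theorem rather than the Kurosh subgroup theorem. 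Neither point affects the structure of the argument or its correctness as a sketch of \cite{R-WW}.
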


One advantage of the category $\mathfrak{Gr}$ over the more na{\"i}ve category $\mathfrak{gr}$ of finitely generated free groups and injective homomorphisms is that there are more functors out of it.

\begin{defn}\mbox{}
\begin{enumerate}[(i)]
\item Let $H : \mathfrak{Gr} \to \mathbf{Ab}$ be the coefficient system sending $G$ to $H_1(G;\bZ) = G^{ab}$, and sending a morphism $(f : G \to H, X)$ to $f_*$. There is an exact sequence
$$0 \lra H(G) \overset{H(s_G)}\lra H(G * \bZ) \lra \bZ \lra 0$$
so this coefficient system is polynomial of degree 1.
\item Let $H^* : \mathfrak{Gr} \to \mathbf{Ab}$ be the coefficient system sending $G$ to $H^1(G;\bZ) = \mathrm{Hom}(G^{ab}, \bZ)$ , and sending a morphism $(f : G \to H, X)$ to the linear dual of
$$H_1(H;\bZ) = H_1(f(G) * X ; \bZ) \lra H_1(f(G);\bZ) \cong H_1(G;\bZ).$$
(Note that this does \emph{not} define a functor on $\mathfrak{gr}$!) There is an exact sequence
$$0 \lra H^*(G) \overset{H^*(s_G)}\lra H^*(G * \bZ) \lra \bZ \lra 0$$
so this coefficient system is polynomial of degree 1.
\end{enumerate}
\end{defn}

\begin{rem}
Not only can more coefficient systems be defined on $\mathfrak{Gr}$ than on $\mathfrak{gr}$, but it follows from recent work of Djament--Vespa \cite{DV} that the most homologically interesting functors \emph{must} be defined here: they show that if $V$ is a polynomial coefficient system which is {reduced} (i.e. $V(\{e\})=0$) and {factors through $\mathfrak{gr}$}, then
$$\underset{n \to \infty}\colim H_*(\Aut(F_n) ; V(F_n))=0.$$
The coefficient system $H^*$ is reduced, but does not factor through $\mathfrak{gr}$: indeed, Satoh \cite{Satoh} has shown that $\underset{n \to \infty}\colim H_1(\Aut(F_n) ; H^*)=\bZ$, and we will show how to recover (the dual version of) this in Section \ref{sec:integral}.
\end{rem}

There are some easy consequences of the definition of polynomiality which allow us to compute or estimate degrees of coefficient systems. Firstly, a summand  of a polynomial functor of degree $\leq k$ is again polynomial of degree $\leq k$, and an extension of two polynomial functors of degree $\leq k$ is again polynomial of degree $\leq k$. Secondly, if $V$ and $W$ are coefficient systems which are polynomial of degree $k$ and $\ell$ respectively, and if they both take values in flat $\bZ$-modules, then $V \otimes_\bZ W$ is polynomial of degree $\leq (k+\ell)$. More generally, if $\bF$ is a field and $V,  W : \mathfrak{Gr} \to \bF\text{-mod} \to \mathbf{Ab}$ are coefficient systems which are polynomial of degree $k$ and $\ell$ respectively and factor through the category of $\bF$-modules, then $V \otimes_\bF W$ is polynomial of degree $\leq (k+\ell)$.

We will often be interested in cohomology rather than homology, for which we will use the following result of Universal Coefficient-type, which is surely standard but for which we could not find a reference.

\begin{lem}\label{lem:UCT}
Let $G$ be a group, $R$ be a PID, and $M$ a left $R[G]$-module, and write $M^* = \mathrm{Hom}_R(M, R)$, a right $R[G]$-module. There is a natural short exact sequence
$$0 \lra \mathrm{Ext}_R^1(H_{i-1}(G;M), R) \lra H^i(G; M^*) \lra \mathrm{Hom}_R(H_i(G;M), R) \lra 0.$$
\end{lem}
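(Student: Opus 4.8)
The plan is to reduce the statement to the classical algebraic universal coefficient theorem for a chain complex of modules over a PID, by means of the tensor--hom adjunction.

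First I would fix a resolution $F_\bullet \lra R$ of the trivial $R[G]$-module by free $R[G]$-modules (e.g.\ the bar resolution, which is convenient because it simultaneously resolves $R$ for the purposes of homology and cohomology). Then $H_i(G;M) = H_i(F_\bullet \otimes_{R[G]} M)$ and $H^i(G;M^*) = H^i(\mathrm{Hom}_{R[G]}(F_\bullet, M^*))$. Writing $C_\bullet \deq F_\bullet \otimes_{R[G]} M$, the key point is the natural isomorphism of cochain complexes
\[
\mathrm{Hom}_{R[G]}(F_\bullet, \mathrm{Hom}_R(M,R)) \;\cong\; \mathrm{Hom}_R(F_\bullet \otimes_{R[G]} M, R) \;=\; \mathrm{Hom}_R(C_\bullet, R),
\]
which is an instance of the adjunction between $-\otimes_{R[G]} M$ and $\mathrm{Hom}_R(M,-)$: unwinding, an $R$-linear map out of $P \otimes_{R[G]} M$ is the same as an $R[G]$-balanced $R$-bilinear map $P \times M \to R$, which is the same as an $R[G]$-linear map $P \to M^*$, with $M^*$ carrying the right action $(\varphi\cdot g)(m) = \varphi(gm)$. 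Taking $i$-th cohomology yields a natural isomorphism $H^i(G;M^*) \cong H^i(\mathrm{Hom}_R(C_\bullet, R))$.

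It then remains to feed $C_\bullet$ into the algebraic universal coefficient theorem over the PID $R$: there is a natural short exact sequence
\[
0 \lra \mathrm{Ext}^1_R(H_{i-1}(C_\bullet), R) \lra H^i(\mathrm{Hom}_R(C_\bullet, R)) \lra \mathrm{Hom}_R(H_i(C_\bullet), R) \lra 0,
\]
and substituting $H_*(C_\bullet) = H_*(G;M)$ together with the identification of the previous paragraph gives precisely the asserted sequence. Naturality in $G$ and in $M$ is inherited from the functoriality of $F_\bullet \otimes_{R[G]}(-)$, of the adjunction isomorphism, and of the algebraic universal coefficient sequence.

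The one point requiring care --- and the main, if mild, obstacle --- is that the algebraic universal coefficient theorem for $\mathrm{Hom}_R(C_\bullet,R)$ needs $C_\bullet$ to be degreewise $R$-projective, so that the sequences $0 \to Z_n \to C_n \to B_{n-1} \to 0$ of cycles and boundaries split (equivalently, each boundary submodule $B_n$ is projective, which over a PID is automatic once $C_{n-1}$ is projective). Since $F_\bullet$ is $R[G]$-free, each $C_n = F_n \otimes_{R[G]} M$ is a direct sum of copies of $M$, so this holds as soon as $M$ is free over $R$ --- which is the case in every application in this paper, where $M$ is built out of $H_\bQ$, $H^*_\bQ$, $H$, or their $p$-local Schur functors. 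Some hypothesis of this kind is genuinely needed (for instance the sequence fails for $G$ trivial, $R=\bZ$, $M=\bZ/2$ and $i=1$), so the lemma should be read with $M$ understood to be $R$-free; granting that, the proof consists only of the two formal steps above.
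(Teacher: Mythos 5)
Your proof is essentially the paper's: both reduce to the algebraic universal coefficient theorem over the PID $R$ applied to the chain complex $P_\bullet \otimes_{R[G]} M$, using the tensor--hom adjunction to identify $\mathrm{Hom}_{R[G]}(P_\bullet, M^*)$ with $\mathrm{Hom}_R(P_\bullet \otimes_{R[G]} M, R)$; the paper merely packages that adjunction as a two-step passage through $G$-invariants, $\mathrm{Hom}_R(P_\bullet, M^*)^G \cong \mathrm{Hom}_R(P_\bullet \otimes_R M, R)^G \cong \mathrm{Hom}_R(P_\bullet \otimes_{R[G]} M, R)$, rather than invoking the adjunction for the group ring in one step.

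Your remark about the missing hypothesis is a genuine catch, and applies equally to the paper's argument: the algebraic UCT needs $P_n \otimes_{R[G]} M$ to be degreewise $R$-free (the paper's appeal to ``a submodule of a free module over a PID is free'' is exactly where this is used), and this requires $M$ to be $R$-free, which the lemma as stated does not assume. Your counterexample ($G$ trivial, $R=\bZ$, $M=\bZ/2$, $i=1$, where the claimed sequence would read $0 \to \bZ/2 \to 0 \to 0 \to 0$) confirms the hypothesis is necessary. Since every $M$ to which the paper applies the lemma is $R$-free, the omission is harmless in context, but the lemma should indeed carry an $R$-freeness (or at least $R$-projectivity) hypothesis on $M$.
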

\begin{proof}
Analogous to \cite[Proposition 7.1]{Brown}. Let $P_\bullet \to R$ be a projective right $R[G]$-module resolution, so $C^\bullet := \mathrm{Hom}_{R[G]}(P_\bullet, M^*)$ is a cochain complex of $R$-modules which computes $H^*(G; M^*)$. Writing
$$C^\bullet = \mathrm{Hom}_{R}(P_\bullet, M^*)^G \cong \mathrm{Hom}_{R}(P_\bullet \otimes_R M, R)^G \cong \mathrm{Hom}_R(P_\bullet \otimes_{R[G]} M, R)$$
and applying the Universal Coefficient Theorem (e.g.\ \cite[p. 114]{CE}), using the fact that a submodule of a free module over a PID is free, gives the desired sequence.
\end{proof}

Using the stability theorem and Lemma \ref{lem:UCT}, we may improve Proposition \ref{prop:VanishTensorPowers} to the following.

\begin{cor}\label{cor:ThmAi}
For all $q \geq 1$, $H^*(\mathrm{Aut}(F_n); (H^*)^{\otimes q})=0$ for $2* \leq n - q-3$.
\end{cor}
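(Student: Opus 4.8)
The plan is to bootstrap the weak vanishing range of Proposition \ref{prop:VanishTensorPowers} up to the claimed range by means of the homological stability theorem \ref{thm:RWW}, using Lemma \ref{lem:UCT} as a bridge between homology and cohomology in both directions. Two preliminary observations are in order: since $H \cong \bZ^n$ is a finitely generated free abelian group, $(H^*)^{\otimes q}$ is the $\bZ$-linear dual of the coefficient system $H^{\otimes q} \colon G \mapsto H_1(G;\bZ)^{\otimes q}$ (with diagonal action), and $H^{\otimes q}$ is polynomial of degree $\leq q$ by the tensor-product estimate, as $H$ is polynomial of degree $1$; moreover, since $H^{\otimes q}$ is a finitely generated abelian group and $\mathrm{Aut}(F_n)$ acts cocompactly on a contractible complex (the spine of Auter space), each group $H_i(\mathrm{Aut}(F_n);H^{\otimes q})$ is finitely generated.

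First I would translate Proposition \ref{prop:VanishTensorPowers} into a homological vanishing statement. If $H^i$ and $H^{i+1}$ of $\mathrm{Aut}(F_n)$ with coefficients in $(H^*)^{\otimes q}$ both vanish, then the sequence of Lemma \ref{lem:UCT} (with $R=\bZ$ and $M=H^{\otimes q}$) forces $\mathrm{Hom}_\bZ(H_i(\mathrm{Aut}(F_n);H^{\otimes q}),\bZ)=0$ and $\mathrm{Ext}^1_\bZ(H_i(\mathrm{Aut}(F_n);H^{\otimes q}),\bZ)=0$, so the finitely generated group $H_i(\mathrm{Aut}(F_n);H^{\otimes q})$ is simultaneously torsion-free and torsion, hence zero. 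Proposition \ref{prop:VanishTensorPowers} therefore gives $H_i(\mathrm{Aut}(F_n);H^{\otimes q})=0$ for $2i\leq n-4q-2$; in particular this group vanishes, for any fixed $i$ and $q$, once $n$ is sufficiently large.

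Next comes the stability input. Applying Theorem \ref{thm:RWW} to the degree $\leq q$ system $H^{\otimes q}$, the stabilisation map $H_i(\mathrm{Aut}(F_n);H^{\otimes q})\to H_i(\mathrm{Aut}(F_{n+1});H^{\otimes q})$ is an isomorphism for $2i\leq n-q-3$; since this condition only improves as $n$ increases, iterating identifies $H_i(\mathrm{Aut}(F_n);H^{\otimes q})$ with $H_i(\mathrm{Aut}(F_m);H^{\otimes q})$ for all sufficiently large $m$, which vanishes by the previous paragraph. Hence $H_i(\mathrm{Aut}(F_n);H^{\otimes q})=0$ whenever $2i\leq n-q-3$. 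Finally, feeding this back into the sequence of Lemma \ref{lem:UCT} in the forward direction: vanishing of $H_i$ and of $H_{i-1}$ of $\mathrm{Aut}(F_n)$ with coefficients in $H^{\otimes q}$ kills both outer terms, giving $H^i(\mathrm{Aut}(F_n);(H^*)^{\otimes q})=0$. The constraint needed for $H_{i-1}$ is only $2i\leq n-q-1$, which is implied by $2i\leq n-q-3$, so the conclusion holds throughout the range $2i\leq n-q-3$.

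There is no serious obstacle here — the argument is a combination of three inputs already available — but some care is required in the two passages through Lemma \ref{lem:UCT}: one must keep track of both the $\mathrm{Hom}$ and the $\mathrm{Ext}^1$ term, invoke finite generation to conclude that a homology group vanishes from the vanishing of those two functors applied to it, and verify that the degree shift $H_i\rightsquigarrow H_{i-1}$ costs nothing because its stability range is the weaker of the two.
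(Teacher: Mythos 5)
Your proof is correct and is precisely the argument the paper intends: the paper derives this corollary in a single line (``Using the stability theorem and Lemma \ref{lem:UCT}, we may improve Proposition \ref{prop:VanishTensorPowers}''), and you have spelled out exactly that — Proposition \ref{prop:VanishTensorPowers} to get vanishing in a weak range, Lemma \ref{lem:UCT} backwards to transfer it to $H_*(\mathrm{Aut}(F_n);H^{\otimes q})$, Theorem \ref{thm:RWW} (applied to the degree $\leq q$ system $H^{\otimes q}$) to propagate the vanishing down to $2i\leq n-q-3$, and Lemma \ref{lem:UCT} forwards to return to cohomology. The one detail you make explicit that the paper leaves implicit is the finite generation of $H_i(\mathrm{Aut}(F_n);H^{\otimes q})$ (from $\mathrm{Aut}(F_n)$ being of type $FP_\infty$), which is indeed needed to conclude that the group vanishes once both $\mathrm{Hom}_\bZ(-,\bZ)$ and $\mathrm{Ext}^1_\bZ(-,\bZ)$ of it do.
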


This proves Theorem \ref{thm:Main1} (i). We remark that Theorem \ref{thm:Main1} (i) also follows from \cite[Th{\'e}or{\`e}me 1]{DV} and Lemma \ref{lem:UCT}, because the functor $H^{\otimes q} : \mathfrak{Gr} \to \mathfrak{gr} \to \mathbf{Ab}$ is polynomial and vanishes on the trivial group (for $q \geq 1$), so by that theorem $H_*(\Aut(F_n);H^{\otimes q})$ vanishes in the stable range.

Finally, we record the stability range for cohomology with coefficients in $S_\lambda(H_\bQ)$ with $\lambda \vdash q$. This follows from $S_\lambda(H_\bQ) \cong S_\lambda(H_\bQ^*)^*$, that $S_\lambda(H_\bQ^*)$ is polynomial of degree $\leq q$ as it is a summand of $(H_\bQ^{*})^{\otimes q}$, the stability theorem, and Lemma \ref{lem:UCT}.

\begin{cor}
The groups $H^i(\Aut(F_n);S_\lambda(H_\bQ))$ are independent of $n$ for $2i \leq n-q-3$.
\end{cor}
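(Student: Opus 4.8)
The plan is to combine the homological stability theorem with polynomial coefficients (Theorem~\ref{thm:RWW}) with the Universal Coefficient statement of Lemma~\ref{lem:UCT}, essentially as indicated just above. \textbf{Step 1: pass to a polynomial coefficient system and take homology.} The functor $H^* \colon \mathfrak{Gr} \to \mathbf{Ab}$ is polynomial of degree $1$, and since $-\otimes_\bZ\bQ$ is exact, the functor $H_\bQ^* = H^* \otimes_\bZ \bQ$, valued in $\bQ$-vector spaces, is again polynomial of degree $1$. By the multiplicativity of degree under $\otimes_\bQ$ noted above, $(H_\bQ^*)^{\otimes q}$ is polynomial of degree $\leq q$. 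In characteristic zero, Schur--Weyl duality gives a decomposition $(H_\bQ^*)^{\otimes q} \cong \bigoplus_{\mu \vdash q} S^\mu \otimes_\bQ S_\mu(H_\bQ^*)$ of $\Sigma_q$-modules, natural in the $\bQ$-vector space $H_\bQ^*$, which exhibits $S_\lambda(H_\bQ^*)$ as a $\bQ[\Sigma_q]$-direct summand of $(H_\bQ^*)^{\otimes q}$ as a functor on $\mathfrak{Gr}$; since bounded-degree functors are closed under summands, $S_\lambda(H_\bQ^*)$ is polynomial of degree $\leq q$. Theorem~\ref{thm:RWW} then shows that
$$H_i(\Aut(F_n); S_\lambda(H_\bQ^*)(F_n)) \lra H_i(\Aut(F_{n+1}); S_\lambda(H_\bQ^*)(F_{n+1}))$$
is an isomorphism whenever $2i \leq n-q-3$, so these homology groups, and the maps between them, are constant in $n$ throughout that range.

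\textbf{Step 2: dualise to cohomology with coefficients in $S_\lambda(H_\bQ)$.} Recall the isomorphism of $\Aut(F_n)$-modules $S_\lambda(H_\bQ) \cong S_\lambda(H_\bQ^*)^*$ noted above (coming from the natural isomorphism $S_\lambda(V^*) \cong S_\lambda(V)^*$ for a finite-dimensional $\bQ$-vector space $V$, applied to $V = H_\bQ$). Applying Lemma~\ref{lem:UCT} with $R = \bQ$, $G = \Aut(F_n)$ and $M = S_\lambda(H_\bQ^*)(F_n)$, and noting that $\mathrm{Ext}^1_\bQ$ vanishes over the field $\bQ$, yields a natural isomorphism
$$H^i(\Aut(F_n); S_\lambda(H_\bQ)) \cong \mathrm{Hom}_\bQ\bigl(H_i(\Aut(F_n); S_\lambda(H_\bQ^*)(F_n)),\, \bQ\bigr).$$
Because the sequence of Lemma~\ref{lem:UCT} is natural in $G$, and the stabilisation map $\Aut(F_n) \to \Aut(F_{n+1})$ is induced by the morphism of $\mathfrak{Gr}$ adding a free generator, the (contravariant) stabilisation maps on the left are identified with the $\bQ$-linear duals of the homology stabilisation maps of Step~1. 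A $\bQ$-linear dual of an isomorphism is an isomorphism, so $H^i(\Aut(F_n); S_\lambda(H_\bQ))$ is independent of $n$ for $2i \leq n-q-3$, as claimed.

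\textbf{Main difficulty.} There is no substantial obstacle here; the statement is an assembly of ingredients already in hand. The two points deserving a little care are: (a) that $S_\lambda(H_\bQ^*)$ genuinely is a $\bQ[\Sigma_q]$-direct summand of $(H_\bQ^*)^{\otimes q}$ \emph{as a functor on $\mathfrak{Gr}$} — which is exactly the functoriality in $H_\bQ^*$ of the Schur--Weyl decomposition — so that it inherits polynomial degree $\leq q$; and (b) keeping track of the naturality in $G$ of Lemma~\ref{lem:UCT}, so that the homology isomorphisms provided by Theorem~\ref{thm:RWW} transport to honest isomorphisms between the cohomology groups, compatible with the stabilisation maps, rather than merely to an abstract equality of dimensions.
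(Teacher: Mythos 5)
Your proof is correct and is exactly the argument the paper has in mind: the paper's own proof sketch cites precisely these four ingredients ($S_\lambda(H_\bQ) \cong S_\lambda(H_\bQ^*)^*$, polynomiality of $S_\lambda(H_\bQ^*)$ as a summand of $(H_\bQ^*)^{\otimes q}$, Theorem~\ref{thm:RWW}, and Lemma~\ref{lem:UCT}) and you have filled them in faithfully, including the naturality bookkeeping needed to transport the homology isomorphisms to cohomology.
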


\section{Graphs labeled by a space $X$}

Let $(X,x_0)$ be a based space. Let us write $\mathcal{G}^X$ for the topological category whose objects are finite sets inside $\bR^\infty$, and whose morphisms from $S \subset \bR^\infty$ to $T \subset \bR^\infty$ consist of a real number $t > 0$ and a graph $\Gamma \subset [0,t] \times \bR^\infty$ with leaves $\{0\} \times S$ and $\{t\} \times T$, equipped with a continuous map $f : (\Gamma, S \cup T) \to (X, x_0)$. This may be given a topology following \cite{galatius-2006}. Choosing once and for all an embedding $\{1,2,\ldots,s\} \subset \bR^\infty$ the space $\mathcal{G}_n^s(X)$ may be defined as the subspace of $\mathcal{G}^X(\emptyset, \{1,2,\ldots,s\})$ consisting of those graphs which are connected and homotopy equivalent to $\vee^n S^1$.

Let $\mathcal{G}_n^s(X)_* \subset \mathcal{G}_n^s(X) \times \bR^{\infty+1}$ be the subspace given by tuples $(t, \Gamma, f; p)$ where $p \in \Gamma$. Forgetting the point $p$ gives a map
$$\pi : \mathcal{G}_n^s(X)_* \lra \mathcal{G}_n^s(X)$$
which is a fibration with fibre over the point $(t, \Gamma, f) \in \mathcal{G}_n^s(X)$ given by the graph $\Gamma$. Furthermore, sending $(t, \Gamma, f, p)$ to $f(p) \in X$ defines a map
$$e : \mathcal{G}_n^s(X)_* \lra X.$$

As the map $\pi$ is a fibration with homotopy-finite fibres, it admits a Becker--Gottlieb transfer map $\trf_\pi : \Sigma^\infty_+ \mathcal{G}_n^s(X) \to \Sigma^\infty_+\mathcal{G}_n^s(X)_*$. Composing this with $\Sigma^\infty_+ e$ and taking the adjoint gives a map
$$\tau_n^s : \mathcal{G}_n^s(X) \lra Q_{1-n}(X_+)$$
to the free infinite loop space on $X$, landing in the path component indexed by $1-n$, the Euler characteristic of $\vee^n S^1$. This map is natural in $X$, and factors through the map $\tau^0_n : \mathcal{G}_n(X) \to Q_{1-n}(X_+)$ up to homotopy.

\begin{thm}\label{thm:Xlim}
The induced map
$$\tau_\infty^1 : \underset{n \to \infty} \hocolim \mathcal{G}^1_n(X) \lra Q_0(X_+)$$
is an integral homology equivalence as long as $X$ is path-connected.
\end{thm}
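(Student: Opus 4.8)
The plan is to reduce the theorem to Galatius's original result (the case $X = *$) via a scanning/parametrised-surgery argument, treating the label map $f : \Gamma \to X$ as extra structure that varies in a fibration over a base built from $X$. First I would observe that $\mathcal{G}^1_n(X)$ fits into a (homotopy) fibration sequence whose base involves $\mathcal{G}^1_n = \mathcal{G}^1_n(*)$ and whose fibre over a graph $\Gamma$ is the space $\mathrm{map}_*(\Gamma, X)$ of based maps; more precisely, forgetting the label gives a map $\mathcal{G}^1_n(X) \to \mathcal{G}^1_n$ which is a quasifibration (or can be rectified to an honest fibration after suitable thickening of the model), with fibre $\mathrm{map}_*(\Gamma, X) \simeq (\Omega X)^{\times n}$ since $\Gamma \simeq \vee^n S^1$ and $X$ is path-connected. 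The Becker--Gottlieb transfer construction producing $\tau^1_n$ is compatible with this extra structure, so there is a commuting diagram relating $\tau^1_n$ for $X$ to $\tau^1_n$ for $*$.

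Next I would pass to the colimit over $n$. On the source, $\mathrm{hocolim}_n \mathcal{G}^1_n(X)$ sits over $\mathrm{hocolim}_n \mathcal{G}^1_n$ — whose homology Galatius computes to be that of $Q_0(S^0)$ — with fibre $(\Omega X)^{\times \infty}$, interpreted appropriately; the key point is that the stabilisation maps are compatible with the labels (adding a loop at the basepoint, labelled by $x_0$). On the target, $Q_0(X_+) = \Omega^\infty \Sigma^\infty(X_+)$ fits into a fibration (after a single loop, or in the group-completed sense) over $Q_0(S^0)$ with fibre $\Omega^\infty \Sigma^\infty X \simeq \Omega^\infty(\Sigma^\infty S^0 \wedge X)$, using the splitting $\Sigma^\infty_+ X \simeq \Sigma^\infty S^0 \vee \Sigma^\infty X$ coming from the basepoint of $X$. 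The transfer-induced map $\tau^1_\infty$ is then a map of fibration sequences over the homology equivalence $\tau^1_\infty|_{X = *} : \mathrm{hocolim}_n \mathcal{G}^1_n \to Q_0(S^0)$ (Galatius), so by a Serre spectral sequence comparison (or the Zeeman comparison theorem, working with homology with local coefficients since the base need not be simply connected) it suffices to show the induced map on fibres is a homology equivalence.

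The remaining step is therefore the fibre comparison: the map $(\Omega X)^{\times \infty} \to \Omega^\infty \Sigma^\infty X$ induced by the transfer, taking a tuple of loops in $X$ (i.e.\ a labelling of the $n$ wedge-circles) to the corresponding point of the free infinite loop space, should be a homology equivalence. This is exactly the statement that the Becker--Gottlieb transfer for $\Gamma \to *$, evaluated on labels, recovers the James/group-completion description of $\Omega^\infty \Sigma^\infty X$: one sees that the transfer sends a labelled graph to (roughly) the sum over vertices/edges of the labels with signs given by the Euler characteristic contributions, so that on the wedge of circles it reproduces the Barratt--Priddy--Quillen--Segal type map $\coprod_k (\Omega X)^{\times k}/\Sigma_k$-group-completion, which is a homology equivalence onto $\Omega^\infty \Sigma^\infty_+ X$. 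Concretely I would factor $\tau^1_n$ through the Pontryagin--Thom / scanning map already implicit in Galatius's proof and simply carry the label along; the compatibility is formal once the models are set up, since everything is natural in $X$ as already noted in the construction of $\tau^s_n$.

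I expect the main obstacle to be purely technical: rectifying the "forget the label" map into an actual fibration (or quasifibration with the right fibre) at finite $n$ within the point-set model $\mathcal{G}^1_n(X)$, and checking that the stabilisation maps and the Becker--Gottlieb transfers are strictly compatible with this structure so that the spectral-sequence comparison genuinely applies. The path-connectivity hypothesis on $X$ enters precisely here — it guarantees the fibre $\mathrm{map}_*(\vee^n S^1, X) \simeq (\Omega X)^{\times n}$ is connected and that the relevant local systems on the (non-simply-connected) base $\mathrm{hocolim}_n \mathcal{G}^1_n \simeq Q_0(S^0)$ behave well — and I would need to track it carefully through the fibre comparison. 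An alternative route, which may be cleaner, is to deduce the theorem from a more general parametrised Madsen--Weiss statement by a homotopy-colimit argument over the simplex category of $X$ (writing $X$ as a homotopy colimit of points), reducing to the known case $X = *$ by a spectral sequence for the homology of a homotopy colimit; but this still requires the same naturality and connectivity bookkeeping.
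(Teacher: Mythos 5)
The proposal takes a fundamentally different route from the paper — the paper redoes Galatius's cobordism-category/scanning argument from scratch with the $X$-labels carried along (showing $B\mathcal{G}^X \simeq Q(S^1 \wedge X_+)$ and then group-completing a monoid of connected $X$-labeled graphs), while you try to reduce to the case $X=*$ by a fibrewise comparison. That reduction does not work, and I think the failure is instructive rather than merely technical.

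There are two genuine gaps, either of which is fatal on its own. First, the map of fibre sequences you want to compare lies over the acyclic map $\hocolim_n \mathcal{G}^1_n \to Q_0(S^0)$, which is an integral homology isomorphism but very far from a $\pi_1$-isomorphism ($\pi_1$ of the source is $\Aut(F_\infty)$, of the target is $\bZ/2$). The Zeeman/Serre spectral sequence comparison with local coefficients therefore does not apply: acyclicity gives isomorphisms in homology only with coefficients in local systems \emph{pulled back from the target}, and the local system $H_*(\map_*(\vee^n S^1,X))$ on $B\Aut(F_n)$ (hence on the colimit) is genuinely nontrivial — indeed its nontriviality is precisely the content of the twisted-coefficient groups the paper is trying to compute, so there is an essential circularity in hoping it trivialises. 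Second, and more concretely, the claimed fibre comparison is simply false: $\hocolim_n \map_*(\vee^n S^1, X) = \hocolim_n (\Omega X)^{\times n}$ (stabilised by the basepoint) does not have the homology of $\Omega^\infty\Sigma^\infty X$. For instance with $X = S^2$ one has $H_1((\Omega S^2)^{\times n};\bQ) \cong \bQ^n$, so the colimit has $H_1 \cong \bQ^\infty$, whereas $H_1(\Omega^\infty\Sigma^\infty S^2;\bQ) = \pi_1^s(S^2)\otimes\bQ = 0$. The Barratt--Priddy--Quillen--Segal/group-completion theorem identifies the group completion of the \emph{labelled configuration space} $\coprod_k (\Omega X)^{\times k}/\Sigma_k$ with $Q X$; the sequential colimit of the products $(\Omega X)^{\times n}$ under basepoint stabilisation is an entirely different object, and conflating the two is where your ``fibre comparison'' goes wrong. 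The $\Sigma_n$-quotient, which you have discarded by working fibrewise over $\mathcal{G}^1_n$, is exactly what is supplied by the monodromy of the base — i.e.\ by the interaction between the label direction and the moduli-of-graphs direction — and that is why one cannot separate the two and must instead run the scanning/group-completion argument for the labelled cobordism category $\mathcal{G}^X$ as a whole, as the paper does.
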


The point of this theorem is that the right-hand side may be computed: the rational cohomology of $Q_{1-n}(X_+)$ may be described compactly as $S^*(\widetilde{H}^*(X;\bQ))$, the free graded-commutative algebra on the reduced cohomology of $X$.

\begin{proof}[Proof sketch]
Following \cite{galatius-2006}, and keeping track of the role that the maps to $X$ play, one may show that $B\mathcal{G}^X \simeq Q(S^1 \wedge X_+)$. Furthermore, under this equivalence the composition
$$\mathcal{G}_n(X) \subset \mathcal{G}^X(\emptyset, \emptyset) \lra \Omega B\mathcal{G}^X \simeq Q(X_+)$$
is weakly homotopic to $\tau^0_n$ by the analogue (with maps to $X$) of the discussion in Section 5.3 of \cite{galatius-2006}.

Now we claim that the natural map
$$\underset{n \to \infty} \hocolim \mathcal{G}^1_n(X) \lra \Omega_{[\emptyset, \{0\}]} B\mathcal{G}^X$$
is an integral homology equivalence as long as $X$ is path-connected. This may be proved following \cite{galatius-2006} and also using ideas from \cite{GR-W}. The crucial point is to consider the category $\mathcal{G}^X_\bullet$ where objects $S$ contain the origin $0 \in \bR^\infty$ and morphisms $\Gamma$ contain the interval $[0,t] \times \{0\}$ on which the function $f$ is constant. The inclusion $\mathcal{G}^X_\bullet \to \mathcal{G}^X$ may be seen to induce an equivalence on classifying spaces as in \cite[Lemma 4.6]{GR-W}. One then considers the subcategory $\mathcal{G}^X_{\bullet,c} \subset \mathcal{G}^X_\bullet$ having only object $\{0\}$ and in which the morphisms are required to be connected. We claim that this inclusion induces an equivalence on classifying spaces. To make morphisms connected use a move similar to that of \cite[Lemma 4.24]{galatius-2006} which connects an arbitrary path component to the standard stick $\bR \times \{0\} \subset \bR \times \bR^{\infty}$ (this requires $X$ to be path-connected). To reduce to a single object use a move similar to that of \cite[Section 4]{GR-W}  to make objects consist of a single point, then isotope them into standard position as in \cite[Proposition 4.26]{GR-W}.

The category $\mathcal{G}^X_{\bullet,c}$ is therefore a monoid, and, as we can slide edges along the standard interval $[0,t] \times \{0\}$, it is a homotopy commutative monoid. We may thus apply the group-completion theorem to it, showing that
$$\hocolim \mathcal{G}^X_{\bullet,c}(\{0\}, \{0\}) \lra \Omega B\mathcal{G}^X_{\bullet,c},$$
is a homology isomorphism, where the homotopy colimit is formed by left multiplication with a connected graph of genus 1. By precomposing with a morphism $\emptyset \leadsto \{0\} \in \mathcal{G}^X$ given by an interval, this map is easily compared with that in the statement of the theorem.
\end{proof}

The main technical result we will require is the following homological stability theorem for the spaces $\mathcal{G}_{n}^s(X)$. We will deduce it from (two) arguments of Cohen--Madsen in the analogous situation of surfaces with maps to a background space.

\begin{thm}\label{thm:Xstab}
Suppose that $X$ is simply-connected.
\begin{enumerate}[(i)]
\item The map $\mathcal{G}^1_n(X) \to \mathcal{G}^1_{n+1}(X)$ induces a homology isomorphism in degrees $2* \leq n - 3$.

\item The map $\mathcal{G}^1_n(X) \to \mathcal{G}_{n}(X)$ induces an isomorphism on homology with $\bZ[\frac{1}{n-1}]$-module coefficients in degrees $2* \leq n - 3$.
\end{enumerate}
\end{thm}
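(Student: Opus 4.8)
The plan is to transcribe, for graphs, the two stability arguments of Cohen--Madsen for moduli of surfaces mapping to a background space, using the homological stability theorem for $\mathrm{Aut}(F_n)$ with polynomial coefficients (Theorem~\ref{thm:RWW}) in place of Ivanov's, and exploiting that the relevant graph fibres have Euler characteristic $1-n$. Forgetting the labelling map $f$ presents $\mathcal{G}^1_n(X)$ and $\mathcal{G}_n(X)$ as total spaces of fibrations (indeed Borel constructions) $\mathrm{Map}_*(\vee^n S^1,X)\to\mathcal{G}^1_n(X)\to\mathcal{G}^1_n=B\mathrm{Aut}(F_n)$ and $\mathrm{Map}(\vee^n S^1,X)\to\mathcal{G}_n(X)\to\mathcal{G}_n=B\mathrm{Out}(F_n)$, and these are what I would analyse.

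For part~(i) the crux is to show that $V_q\colon G\mapsto H_q(\mathrm{Map}_*(BG,X))$ --- where $BG$ denotes a wedge of circles with fundamental group $G$ --- is a coefficient system on $\mathfrak{Gr}$ that is polynomial of degree $\leq q$, with covariant structure ``extend a based map by the constant map on the newly adjoined wedge summand''. Since $X$ is simply-connected, $\Omega X$ is connected, so $\mathrm{Map}_*(B(G*\mathbb{Z}),X)\simeq\mathrm{Map}_*(BG,X)\times\Omega X$, the map $V_q(s_G)$ is the split injection $\alpha\mapsto\alpha\otimes[\mathrm{pt}]$, and its cokernel $\bigoplus_{b\geq 1}V_{q-b}\otimes\widetilde H_b(\Omega X)$ is polynomial of degree $\leq q-1$ by induction on $q$ (with base case $V_0$ the constant functor $\mathbb{Z}$). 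The Leray--Serre spectral sequence of the first fibration, $E^2_{p,q}=H_p(\mathrm{Aut}(F_n);\,V_q(F_n))\Rightarrow H_{p+q}(\mathcal{G}^1_n(X))$, is natural for the stabilisation maps (which act simultaneously through $s_{F_n}$ on groups and coefficients), so Theorem~\ref{thm:RWW} with $k=q$ makes the stabilisation map an isomorphism on $E^2_{p,q}$ for $2p\leq n-q-3$ and an epimorphism for $2p\leq n-q-1$; as $q\geq 0$, these hold at all bidegrees of total degree $\leq D$, resp.\ $\leq D+1$, whenever $2D\leq n-3$, and the standard comparison of first-quadrant spectral sequences then upgrades this to an isomorphism on the abutment $H_*(\mathcal{G}^1_n(X))$ for $2*\leq n-3$.

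For part~(ii) I would bring in the Becker--Gottlieb transfer of the fibration $B\mathrm{Aut}(F_n)\to B\mathrm{Out}(F_n)$, whose fibre $BF_n\simeq\vee^n S^1$ has Euler characteristic $1-n$, so that after inverting $n-1$ this transfer splits the $H_1(F_n)$-contribution off the Leray--Serre spectral sequence computing $H_*(\mathcal{G}_n(X))$. Comparing that spectral sequence with the $\mathrm{Out}(F_n)$-version of the one in part~(i) --- the fibre $\mathrm{Map}(\vee^n S^1,X)$ being, like $\mathrm{Map}_*(\vee^n S^1,X)$, of finite polynomial degree over $\mathfrak{Gr}$ since $X$ is simply-connected and evaluation admits a section by constant maps --- and feeding in part~(i), one is left with bookkeeping in the two spectral sequences which I expect to run exactly as in Cohen--Madsen. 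Putting $X$ equal to a point recovers the Hatcher--Vogtmann isomorphism $\mathcal{G}^1_n\to\mathcal{G}_n$ in this range.

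The main obstacle is part~(ii). In contrast with the surface case, the fibre of the forgetful map $\mathcal{G}^1_n(X)\to\mathcal{G}_n(X)$ is the homotopy fibre of a map $\vee^n S^1\to X$, which is not a finitely-dominated space, so the Becker--Gottlieb transfer is not available for that map itself and must be imported from $B\mathrm{Aut}(F_n)\to B\mathrm{Out}(F_n)$; the delicate part is then to align the two spectral sequences and verify that the correction terms appearing on either side of the resulting splitting cancel in the stable range. In part~(i), by contrast, the only input that is not purely formal is the polynomial-degree estimate for $V_q$.
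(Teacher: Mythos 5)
Your argument for part (i) matches the paper's: define $V_q^X(G)=H_q(\mathrm{map}_*(BG,X))$ as a coefficient system on $\mathfrak{Gr}$, use the split fibration $\mathrm{map}_*(BG,X)\to\mathrm{map}_*(B(G*\bZ),X)\to\Omega X$ and simple connectivity of $X$ to show by induction that $V_q^X$ has polynomial degree $\leq q$, then feed this into Theorem \ref{thm:RWW} and a spectral sequence comparison. (The paper phrases the inductive step via the filtration of the cokernel with associated graded $H_{i-j}(\Omega X;V_j^X(G))$ rather than a K\"unneth splitting, but the content is the same.) One small reassurance you are missing: the base case $V_0^X=\bZ$ uses simple connectivity of $X$ to know the mapping space is connected.

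For part (ii), however, your plan diverges from the paper and I do not think it closes the gap you yourself flag. You correctly observe that the fibre of $\mathcal{G}^1_n(X)\to\mathcal{G}_n(X)$ is (essentially) the homotopy fibre of a map $\Gamma\to X$, which is not finitely dominated, so there is no Becker--Gottlieb transfer for that map; but the fix you propose, importing the transfer from $B\Aut(F_n)\to B\Out(F_n)$ and comparing the two Serre spectral sequences, does not obviously run: those spectral sequences have different base groups ($\Aut$ versus $\Out$) \emph{and} different fibres ($\mathrm{map}_*(\vee^n S^1,X)$ versus $\mathrm{map}(\vee^n S^1,X)$), so there is no map of spectral sequences to compare, and it is unclear how the correction terms would be controlled. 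The paper's route is to introduce the auxiliary space $\mathcal{G}_n(X)_*$ of $X$-labelled graphs with a chosen \emph{interior} point (not a leaf). The forgetful map $\pi\colon\mathcal{G}_n(X)_*\to\mathcal{G}_n(X)$ is a fibration whose fibre over $(\Gamma,f)$ is the finite graph $\Gamma$ itself, which \emph{is} finitely dominated with Euler characteristic $1-n$, so the Becker--Gottlieb transfer $\trf_\pi$ is available and is split injective after inverting $n-1$. Evaluation at the chosen point gives $e\colon\mathcal{G}_n(X)_*\to X$, and (using part (i), Theorem \ref{thm:Xlim} and the factorisation $\tau^1_n\simeq\tau^0_n\circ f$ to get surjectivity of $\tau^0_n$ in the range) a Leray--Hirsch argument shows $e\times(\tau^0_n\circ\pi)\colon\mathcal{G}_n(X)_*\to X\times Q_{1-n}(X_+)$ is a homology isomorphism in the stable range; combined with a commutative square involving $\trf_\pi$, the diagonal, and the homology suspension, this forces $\tau^0_n$ to be injective, hence an isomorphism, hence $f$ is one too. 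So the missing idea is the introduction of $\mathcal{G}_n(X)_*$ with its point-forgetting fibration, which replaces your non-finitely-dominated fibre with an honest finite graph and makes the transfer available exactly where it is needed.
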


\begin{proof}
For part (i) we follow the argument of Cohen and Madsen \cite{CM}. There is a map of homotopy fibre sequences
\begin{equation*}
\xymatrix{
\mathrm{map}_*(\vee^n S^1, X) \ar[r] \ar[d] & \mathcal{G}^1_n(X) \ar[d] \ar[r] & \mathcal{G}^1_n(*) \ar[d]\\
\mathrm{map}_*(\vee^{n+1} S^1, X) \ar[r] & \mathcal{G}^1_{n+1}(X) \ar[r] & \mathcal{G}^1_{n+1}(*)
}
\end{equation*}
so it is enough to show that $\Aut(F_n) \circlearrowleft H_i(\mathrm{map}_*(\vee^n S^1, X) ; \bZ)$ is part of a polynomial coefficient system of degree $\leq i$. In this case the map of Serre spectral sequences will induce an isomorphism on $E^2_{s,t}$ for $2s \leq n-t-3$ and an epimorphism for $2s \leq n-t-1$; in particular it induces an isomorphism for $2(s+t) \leq n-3$ and an epimorphism for $2(s+t) \leq n-1$. It follows from the spectral sequence comparison theorem that the map $H_*(\mathcal{G}^1_n(X)) \to H_*(\mathcal{G}^1_{n+1}(X))$ in an isomorphism in degrees $2* \leq n-3$ and an epimorphism in degrees $2* \leq n-1$.

We define a coefficient system $V_i^X : \mathfrak{Gr} \to \mathbf{Ab}$ on objects by
$$V_i^X(G) := H_i(\map_*(BG, X);\bZ)$$
and on a morphism $(f: G \to H, X)$ we use precomposition by
$$BH = B(f(G) * X) \lra B(f(G)) \cong BG.$$
This defines a coefficient system, and since $B(G * \bZ) \to BG$ is split surjective the stabilisation maps $V_i^X(s_G)$ are all split injective. The coefficient system $V_0^X$ agrees with the constant coefficient system $\bZ$, as $X$ has been assumed to be simply-connected: in particular it is polynomial of degree 0.

Let us suppose for an induction that $V_j^X$ has degree $\leq j$ for all $j < i$. Consider the homotopy fibre sequence
$$\mathrm{map}_*(BG, X) \overset{\iota}\lra \mathrm{map}_*(B(G * \bZ), X) \lra \map_*(B\bZ, X),$$
where the map $\iota$ induces $V_i^X(s_G)$ on $i$th homology. As $\iota$ is split injective, the Serre spectral sequence for this fibration (which is over a path-connected base) collapses, and we find that $\mathrm{Coker}(V_i^X(G) \to V_i^X(G * \bZ))$ has a filtration with associated graded
$$\{H_{i-j}(\Omega X ; V_{j}^X(G))\}_{j=0}^{i-1}.$$
Each $V_{j}^X(-)$ is polynomial of degree $\leq i-1$ so $H_{i-j}(\Omega X ; V_{j}^X(-))$ is too; as degree is preserved under extensions it follows that $\mathrm{Coker}(V_i^X(G) \to V_i^X(G * \bZ))$ has degree $\leq i-1$, hence $V_i^X$ has degree $\leq i$.

For part (ii), we follow a different argument of Cohen and Madsen \cite{CM2}. We have a diagram
\begin{equation*}
\xymatrix{
 & \vee^n S^1 \ar[d] \\
\mathcal{G}_n^1(X) \ar[r] \ar[rd]^-f& \mathcal{G}_n(X)_* \ar[d]^-\pi \ar[r]^-e & X\\
 & \mathcal{G}_n(X) \ar[r]^-{\tau_n^0}& Q_{1-n}(X_+)
}
\end{equation*}
in which the row and column are fibrations, and the map $\tau_n^0$ is the adjoint to the map of spectra $\Sigma^\infty_+ \mathcal{G}_n(X) \overset{\trf_\pi}\to \Sigma^\infty_+ \mathcal{G}_n(X)_* \overset{e}\to \Sigma^\infty_+ X$ given by the composition of the Becker--Gottlieb transfer for the map $\pi$ followed by the map $e$ given by evaluating the map to $X$ at the marked point.

By part (i), Theorem \ref{thm:Xlim}, and the analogue with maps to $X$ of the discussion in Section 5.3 of \cite{galatius-2006}, the composition $\tau_n^0 \circ f \simeq \tau^1_n : \mathcal{G}_n^1(X) \to Q_{1-n}(X_+)$ is an isomorphism in degrees $2* \leq n-3$, and in particular $\tau^0_n$ is surjective on homology in this range. It follows from Leray--Hirsch that the map
$$e \times (\tau_n^0 \circ \pi): \mathcal{G}_n(X)_* \lra X \times Q_{1-n}(X_+)$$
is also an isomorphism in this range.

Now we have the commutative diagram
\begin{equation*}
\xymatrix{
H_*(\mathcal{G}_n(X)) \ar@{=}[d] \ar[r]^-{\trf_\pi}& H_*(\mathcal{G}_n(X)_*) \ar[r]_-\sim^-{e \times \tau^0_n \circ \pi}& H_*(X \times Q_{1-n}(X_+))\\
H_*(\mathcal{G}_n(X)) \ar[r]^-{\tau^0_n}& H_*(Q_{1-n}(X_+)) \ar[r]^-{\Delta}& H_*(Q_{1-n}(X_+) \times Q_{1-n}(X_+)) \ar[u]_-{\sigma \otimes \mathrm{Id}},
}
\end{equation*}
where $\sigma$ denotes the homology suspension. The transfer map $\trf_\pi$ is (split) injective with $\bZ[\frac{1}{n-1}]$-module coefficients, so in degrees $2* \leq n-3$ the map $\tau_n^0$ is injective too. Hence $\tau^0_n$ is an isomorphism in this range, so $f$ is too.
\end{proof}

\section{Proof of the main theorems}

\subsection{Recollections on Schur--Weyl duality}

We shall need a small amount of representation theory, but nothing beyond e.g. the first few parts of Chapter 9 of \cite{Procesi}. Recall that for a vector space $V$ (over $\bQ$) and a partition $\lambda \vdash q$ we have defined $S_\lambda(V)$ as $\mathrm{Hom}_{\Sigma_q}(S^\lambda, V^{\otimes q})$. If $V$ is finite-dimensional then as the Specht modules $S^\lambda$ give a complete set of irreducible representations of $\Sigma_q$ we may write
$$V^{\otimes q} \cong \bigoplus_{\lambda \vdash q} S_\lambda(V) \otimes S^\lambda$$
as $GL(V) \times \Sigma_q$-modules. The $S_\lambda(V)$ are non-zero as long as $\dim_\bQ V \geq q$, in which case they are irreducible $GL(V)$-modules.

\subsection{Labeled partitions}

Let us fix a sequence of $\bQ$-vector spaces $W_1, W_2, \ldots$, and consider 
$$F(V):=\mathrm{Sym}^*(W_1 \otimes \mathrm{Sym}^1(V) \oplus W_2 \otimes \mathrm{Sym}^2(V) \oplus W_3 \otimes \mathrm{Sym}^3(V) \oplus \cdots).$$
If we consider this as a functor landing in graded vector spaces, by letting $V$ have degree 1, then the homogenous pieces of $F$ are each polynomial functors of $V$ (see \cite[\S 9.7]{Procesi} for this notion). Thus the $\Sigma_q$-module $\mathrm{Hom}_{GL(V)}(V^{\otimes q}, F(V))$ is independent of $V$ as long as $\dim_\bQ V \geq q$. We wish to identify this $\Sigma_q$-module.

Choose bases $\Omega_i$ for the vector spaces $W_i$, and let $\mathcal{P}_q(\Omega_1, \Omega_2, \ldots)$ be the set of partitions $P$ of $\{1,2,\ldots,q\}$ equipped with a labeling of each part $X$ of size $i$ with an element $\ell_X \in \Omega_i$. For each such datum $(P, \ell)$ there is a map of $GL(V)$-modules
\begin{align*}
\phi_{P, \ell} : V^{\otimes q} &\lra \mathrm{Sym}^*(W_1 \otimes \mathrm{Sym}^1(V) \oplus W_2 \otimes \mathrm{Sym}^2(V)  \oplus \cdots)\\
v_1 \otimes \cdots \otimes v_q &\longmapsto \prod_{X \subset P} \left[\ell_X \cdot \prod_{i \in X} v_i \right].
\end{align*}
If $\sigma \in \Sigma_q$ then $\phi_{P, \ell} \circ \sigma = \phi_{\sigma(P, \ell)}$, giving a $\Sigma_q$-equivariant map
$$\phi : \bQ\{\mathcal{P}_q(\Omega_1, \Omega_2, \ldots)\} \lra \mathrm{Hom}_{GL(V)}(V^{\otimes q}, F(V)).$$

\begin{prop}\label{prop:IdentifySymSym}
The map $\phi$ is an isomorphism as long as $\dim_\bQ V \geq q$.
\end{prop}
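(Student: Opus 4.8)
The plan is to show that $\phi$ is an isomorphism by exhibiting both sides as spaces of the same dimension (in fact, with matching multiplicity decompositions) and checking that $\phi$ is surjective — or, more efficiently, to directly produce a basis of the target that is the image under $\phi$ of the evident basis of the source. The key structural input is Schur--Weyl duality, recalled in the previous subsection: since the homogeneous pieces of $F$ are polynomial functors of $V$, for $\dim_\bQ V \geq q$ the functor $V \mapsto \mathrm{Hom}_{GL(V)}(V^{\otimes q}, F(V))$ picks out precisely the ``multiplicity space'' of the degree-$q$ part of $F$, and $V^{\otimes q} \cong \bigoplus_{\lambda \vdash q} S_\lambda(V) \otimes S^\lambda$ reduces the computation of both sides to understanding how $F(V)$ decomposes into irreducible $GL(V)$-modules $S_\lambda(V)$.

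First I would unwind the target. Expanding the outer $\mathrm{Sym}^*$, the degree-$q$ part of $F(V)$ is a direct sum, over ways of writing $q = \sum_i i \cdot (\text{number of chosen factors of } \mathrm{Sym}^i(V))$, of terms of the form $\bigotimes_i \mathrm{Sym}^{m_i}\!\big(W_i \otimes \mathrm{Sym}^i(V)\big)$. The point is that the set $\mathcal{P}_q(\Omega_1,\Omega_2,\ldots)$ is exactly a combinatorial index set for a monomial basis of this object: a labeled partition records which parts (= which ``slots'' using a copy of $\mathrm{Sym}^i(V)$ for a part of size $i$) occur and with which basis label from $\Omega_i$, and the symmetrization inside each $\mathrm{Sym}^{m_i}$ corresponds to the fact that a partition is an \emph{unordered} collection of parts. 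So I would make precise the bijection between $\mathcal{P}_q(\Omega_1,\Omega_2,\ldots)$ and a natural spanning set of homomorphisms obtained from $\phi$, and then argue linear independence.

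For linear independence (equivalently injectivity of $\phi$, after which a dimension count gives surjectivity, or one argues surjectivity directly), the cleanest route is to evaluate on a well-chosen tensor. Fix a basis $e_1,\ldots,e_{\dim V}$ of $V$ with $\dim V \geq q$. Given a labeled partition $(P,\ell)$ with parts $X_1,\ldots,X_r$, feed $\phi_{P,\ell}$ the tensor $v_1 \otimes \cdots \otimes v_q$ where $v_j = e_{a(j)}$ and $a : \{1,\ldots,q\} \to \{1,\ldots,\dim V\}$ is chosen so that $a$ is constant on each part $X_s$ and takes distinct values on distinct parts; then $\phi_{P,\ell}(v_1 \otimes \cdots \otimes v_q) = \prod_s \big[\ell_{X_s} \cdot e_{a(X_s)}^{\,|X_s|}\big]$, a specific monomial in $F(V)$. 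Two distinct labeled partitions, after permuting the input appropriately, produce monomials involving genuinely different data (different block structure or different labels), and monomials in the polynomial algebra $F(V)$ are linearly independent; running this over all $(P,\ell)$ simultaneously, a standard triangularity / ``distinct leading monomials'' argument shows the $\phi_{P,\ell}$ are linearly independent in $\mathrm{Hom}_{GL(V)}(V^{\otimes q}, F(V))$.

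It remains to match dimensions so that linear independence forces $\phi$ to be an isomorphism. Here one computes $\dim \mathrm{Hom}_{GL(V)}(V^{\otimes q}, F(V))$: decomposing $F(V)$ in degree $q$ into $GL(V)$-irreducibles and using $\mathrm{Hom}_{GL(V)}(V^{\otimes q}, S_\lambda(V)) \cong S^\lambda$ (Schur--Weyl), this dimension is $\sum_{\lambda \vdash q} (\dim S^\lambda)\cdot(\text{multiplicity of } S_\lambda(V) \text{ in } F(V)_q)$, which is $\dim \mathrm{Hom}_{\Sigma_q}(\text{triv or regular module}, \ldots)$ — in any case it equals the number of labeled partitions by the same combinatorial bookkeeping as above. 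I would actually organize this more slickly: note that both source and target are functors of the graded data $(W_1, W_2, \ldots)$ that are \emph{polynomial} in each $W_i$, so by taking all $W_i$ large and using multilinearity it suffices to check the case where each $\Omega_i$ has a single element (or to reduce to generating functions / symmetric function identities). The main obstacle is purely bookkeeping: carefully matching the unordered-collection-of-labeled-blocks combinatorics of $\mathcal{P}_q$ with the nested symmetric powers in $F(V)$, and making the ``distinct leading monomials'' argument fully rigorous given that the $\phi_{P,\ell}$ are $GL(V)$-maps rather than vectors — i.e. one must be a little careful that evaluating at the chosen tensors really does separate them. No single step is deep; the risk is an off-by-a-symmetrization error in the combinatorial identification.
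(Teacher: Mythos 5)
Your plan is genuinely different from the paper's, and in outline it works, but the effort is distributed differently than you anticipate. The paper decomposes both source and target along $\Sigma_q$-orbits (``types of partition''), uses the first fundamental theorem of invariant theory to identify each target summand with the cyclic submodule $\bQ[\Sigma_q]\cdot\gamma$ generated by the stabiliser idempotent $\gamma = \tfrac{1}{|S|}\sum_{\sigma\in S}\sigma$ of a chosen labeled partition, checks surjectivity by showing $\phi$ hits $\gamma$, and then gets the dimension count for free from $\bQ[\Sigma_q]\cdot\gamma \cong \bQ[\Sigma_q/S]$. You instead propose injectivity-by-evaluation plus a dimension count, without the orbit decomposition. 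Both routes ultimately rest on the same Schur--Weyl / FFT input, so this is a different organization of the same ingredients rather than a new idea.

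Two remarks on your execution. First, your worry about ``an off-by-a-symmetrization error'' in a triangularity argument is misplaced: if you evaluate all $\phi_{P,\ell}$ at the single generic tensor $e_1\otimes\cdots\otimes e_q$ (using $\dim_\bQ V \geq q$), then $\phi_{P,\ell}(e_1\otimes\cdots\otimes e_q) = \prod_{X\in P}\bigl[\ell_X\cdot\prod_{i\in X}e_i\bigr]$ is a monomial in $F(V)$ from which $(P,\ell)$ can be read off directly (the inner factors $\prod_{i\in X}e_i$ determine the blocks because the $e_i$ are distinct, and $\ell_X$ gives the labels). These are distinct elements of a monomial basis, so linear independence is immediate, no ordering or leading-term argument required. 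Second, the part you wave away as ``purely bookkeeping'' is actually where the remaining work lives: you must show $\dim \mathrm{Hom}_{GL(V)}(V^{\otimes q}, F(V))$ equals $|\mathcal{P}_q(\Omega_1,\Omega_2,\ldots)|$. The cleanest route is to identify this $\mathrm{Hom}$-space (via Schur--Weyl) with the weight-$(1^q,0,\ldots,0)$ weight space of the degree-$q$ part of $F(V)$, and check that this weight space is spanned exactly by the monomials above, i.e.\ indexed by labeled partitions. This is not hard but does require the same care the paper exerts in its orbit-by-orbit identification; you should not present it as routine.
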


\begin{proof}
Let us call an orbit of $\Sigma_q$ acting on $\mathcal{P}_q(\Omega_1, \Omega_2, \ldots)$ a \emph{type of partition}. It consists of a partition $\lambda \vdash q$ along with an unordered list of labels $L_i$ in $\Omega_i$ for the parts of size $i$, and we write $\mathcal{P}(\lambda; L_1, L_2, \ldots)$ for this orbit.

The source of $\phi$ splits as a direct sum of (cyclic) modules $\bQ\{\mathcal{P}(\lambda; L_1, L_2, \ldots)\}$ one for each type of partition. For a type of partition $\mathcal{P}(\lambda; L_1, L_2, \ldots)$ and a $\omega \in \Omega_i$, let us write $\vert \omega \vert=i$ and $a_\omega$ for the number of parts labeled by $\omega$. The target of $\phi$ splits as a direct sum of terms
$$M(\lambda; L_1, L_2, \ldots) := \mathrm{Hom}_{GL(V)}\left(V^{\otimes q}, \bigotimes_{\omega \in \cup_{i} \Omega_i}\mathrm{Sym}^{a_{\omega}}(\{\omega\} \otimes \mathrm{Sym}^{\vert \omega \vert}(V))\right)$$
one for each type of partition. The map $\phi$ restricts to a $\bQ[\Sigma_q]$-module map
$$\phi_{\mathcal{P}(\lambda; L_1, L_2, \ldots)}: \bQ\{\mathcal{P}(\lambda; L_1, L_2, \ldots)\} \lra M(\lambda; L_1, L_2, \ldots),$$
and it is enough to show that each of these is an isomorphism.

As $GL(V)$-representations we have a surjection
$$\bigotimes_{\omega \in \cup_{i} \Omega_i}(\{\omega\} \otimes V^{\otimes \vert \omega \vert})^{\otimes a_{\omega}} \lra  \bigotimes_{\omega \in \cup_{i} \Omega_i}\mathrm{Sym}^{a_{\omega}}(\{\omega\} \otimes \mathrm{Sym}^{\vert \omega \vert}(V))$$
which is split by the standard symmetrisers. Furthermore if we choose a $(P,\ell) \in \mathcal{P}(\lambda; L_1, L_2, \ldots)$ then we obtain an isomorphism
\begin{align*}
V^{\otimes q} &\overset{\sim}\lra \bigotimes_{\omega \in \cup_{i} \Omega_i}(\{\omega\} \otimes V^{\otimes \vert \omega \vert})^{\otimes a_{\omega}}\\
v_1 \otimes \cdots \otimes v_q &\longmapsto \bigotimes_{X \subset P} \left(\ell_X \otimes \bigotimes_{i \in X} v_i \right)
\end{align*}
(where these terms must be suitably permuted to be put in the right form).

In total this identifies $M(\lambda; L_1, L_2, \ldots)$ with a summand of $\mathrm{Hom}_{GL(V)}(V^{\otimes q}, V^{\otimes q})$, which by the first fundamental theorem of invariant theory for $GL(V)$ is a free left $\bQ[\Sigma_q]$-module generated by the identity map of $V^{\otimes q}$ (as long as $\dim_\bQ V \geq q$). Thus the $\bQ[\Sigma_q]$-module $M(\lambda; L_1, L_2, \ldots)$ may be identified with the cyclic left submodule of $\bQ[\Sigma_q]$ generated by the idempotent $\gamma:=\tfrac{1}{\vert S \vert}\sum_{\sigma \in S} \sigma$, where $S \leq \Sigma_q$ is the stabiliser of $(P,\ell) \in \mathcal{P}_q(\Omega_1, \Omega_2, \ldots)$.

Under this identification we have $\phi_{\mathcal{P}(\lambda; L_1, L_2, \ldots)}(P, \ell)=\gamma$, so $\phi_{\mathcal{P}(\lambda; L_1, L_2, \ldots)}$ is surjective. On the other hand, we claim that the submodule $\bQ[\Sigma_q]\cdot\gamma \leq \bQ[\Sigma_q]$ is isomorphic to $\bQ[\Sigma_q/S]$, and hence has dimension the size of the orbit $\mathcal{P}(\lambda; L_1, L_2, \ldots)$. This implies that $\phi_{\mathcal{P}(\lambda; L_1, L_2, \ldots)}$ is an isomorphism. To prove the claim, note that the surjective module map $\bQ[\Sigma_q]\cdot\gamma \oplus \bQ[\Sigma_q]\cdot(1-\gamma)=\bQ[\Sigma_q] \to \bQ[\Sigma_q/S]$ sends $1-\gamma$ to 0, so gives a surjection $\bQ[\Sigma_q]\cdot\gamma \to \bQ[\Sigma_q/S]$. On the other hand the module map
$$\sum a_g gS \mapsto \sum a_g g \cdot \gamma : \bQ[\Sigma_q/S] \lra \bQ[\Sigma_q]\cdot\gamma$$
is well-defined and surjective.
\end{proof}

\subsection{Proof of Theorem \ref{thm:Main1}}

We have already proved Theorem \ref{thm:Main1} (i) in Corollary \ref{cor:ThmAi}. For  Theorem \ref{thm:Main1} (ii), first choose a functorial model for Eilenberg--MacLane spaces $K(-, n)$, then fix a finite-dimensional $\bQ$-vector space $V$ and consider the fibration
$$K(H^* \otimes_\bZ V^*, 1) \simeq \mathrm{map}_*(\vee^n S^1, K(V^*, 2)) \lra \mathcal{G}_n^1(K(V^*, 2)) \lra  \mathcal{G}_n^1(*) \simeq B\mathrm{Aut}(F_n)$$
and its associated Serre spectral sequence with $\bQ$-coefficients
\begin{equation}\label{eq:sseqV}
E_2^{p,q} := H^p(\mathrm{Aut}(F_n); \wedge^q(H_\bQ \otimes V)) \Longrightarrow H^{p+q}(\mathcal{G}_n^1(K(V^*, 2));\bQ).
\end{equation}
The action of $GL(V)$ on $K(V^*, 2)$, and hence on the fibration above, make this into a spectral sequence of $\bQ[GL(V)]$-modules.

The proof of the following key lemma is close to an argument communicated to the author by S{\o}ren Galatius to prove \cite[Conjecture C]{R-WAutOld}. It was his argument that led us to think along the lines necessary to prove Theorem \ref{thm:Main1}.

\begin{lem}\label{lem:collapse}
The spectral sequence \eqref{eq:sseqV} collapses.
\end{lem}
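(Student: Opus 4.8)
The plan is to exhibit enough classes in $H^*(\mathcal{G}_n^1(K(V^*,2));\bQ)$ to force the spectral sequence to collapse. First I would observe that the total space $\mathcal{G}_n^1(K(V^*,2))$ comes with the natural map $\tau_n^1 : \mathcal{G}_n^1(K(V^*,2)) \to Q_0((K(V^*,2))_+)$, and by Theorem \ref{thm:Xstab} (i) together with Theorem \ref{thm:Xlim} this map is a homology isomorphism in degrees $2* \leq n-3$ after passing to the limit. Since $\widetilde{H}^*(K(V^*,2);\bQ)$ is the free graded-commutative algebra on $V^*$ placed in degree $2$, i.e.\ $\mathrm{Sym}^*(V^*)$ with $V^*$ in degree $2$, the rational cohomology of $Q_0((K(V^*,2))_+)$ is $S^*(\mathrm{Sym}^{\geq 1}(V^*)[2])$, the free graded-commutative algebra on classes $\mathrm{Sym}^l(V^*)$ in each even degree $2l$, $l \geq 1$. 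In particular, in the stable range, $H^*(\mathcal{G}_n^1(K(V^*,2));\bQ)$ is a polynomial algebra (on even generators) whose total dimension in each degree is already as large as possible.

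The key point is then to compare the two sides. On the $E_2$-page the algebra $\bigoplus_{p,q} E_2^{p,q}$ is the tensor product of the fibre cohomology $\wedge^*(H_\bQ \otimes V) = H^*(\mathrm{Aut}(F_n);\bQ) \otimes (\text{something})$ — more precisely the spectral sequence is a module over the cohomology of the base $H^*(\mathrm{Aut}(F_n);\bQ)$, which by Galatius' theorem vanishes in positive degrees in the stable range, so the differentials are determined by what they do on the fibre classes in $E_2^{0,*} = \wedge^*(H_\bQ \otimes V)^{\mathrm{Aut}(F_n)}$ — and I would compute the size of $E_\infty$ from the known answer on the right-hand side and compare it with the size of $E_2$. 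Concretely: $\wedge^*(H_\bQ \otimes V) \cong \bigoplus_{\lambda} S_\lambda(H_\bQ) \otimes S_{\lambda'}(V)$, so taking $GL(V)$-invariant multiplicities, the $E_2$-page sees exactly the groups $H^p(\mathrm{Aut}(F_n); S_\lambda(H_\bQ))$ we want to identify. Matching Euler characteristics (as virtual $GL(V)$-representations, which is legitimate since differentials are $GL(V)$-equivariant and the spectral sequence is a module over $H^*(\mathrm{Aut}(F_n);\bQ)$ which is trivial in the range) against $S^*(\mathrm{Sym}^{\geq 1}(V^*)[2])$ shows that the alternating sum of the dimensions of the columns equals the dimension of the abutment; but a non-trivial differential would strictly decrease $E_\infty$ below what the abutment requires. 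Hence all differentials vanish.

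The main obstacle I anticipate is making the dimension/Euler-characteristic bookkeeping rigorous: one has to argue that the total dimension of $E_2$ in each total degree is \emph{at most} that of $H^*(Q_0((K(V^*,2))_+);\bQ)$ in the stable range, so that the equality forced by the Euler characteristic leaves no room for cancellation. For this I would use Proposition \ref{prop:IdentifySymSym} with the $W_i$ chosen appropriately (so that $F(V)$ is the free graded-commutative algebra computing the right-hand side, with $W_i = \mathrm{Sym}^i(V^*){}^*$ or similar), which identifies $H^*(Q_0((K(V^*,2))_+);\bQ)$, as a $GL$-representation in each degree, with a permutation module on labeled partitions; comparing this explicitly with the fibre term $\wedge^*(H_\bQ \otimes V)$ via the $E_2^{0,*}$ edge and the fact that the fibre inclusion is split injective on rational cohomology (because $K(H^*\otimes V^*,1)$ maps to $\mathcal{G}_n^1(K(V^*,2))$ compatibly with the $Q$-construction) should pin down that the edge map $E_2^{0,*} \to E_\infty^{0,*}$ is already an isomorphism, and then a counting argument propagates collapse to all columns. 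The secondary subtlety is keeping careful track of the stable range so that all identifications (Galatius' vanishing, the homology equivalence of Theorem \ref{thm:Xstab}, stability for $S_\lambda(H_\bQ)$) hold simultaneously in the relevant degrees.
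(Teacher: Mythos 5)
Your approach has a circularity that the paper's argument avoids. You want to deduce collapse by comparing the size of $E_2$ with the known size of the abutment and concluding that a non-trivial differential would make $E_\infty$ too small; but the abutment \emph{is} $E_\infty$ by definition, so this comparison has force only if you can independently bound $\dim E_2^{p,q}=\dim H^p(\Aut(F_n);\wedge^q(H_\bQ\otimes V))$ from below --- which is exactly the information you are trying to produce from the collapse. The Euler-characteristic identity between $E_2$ and the abutment holds for any convergent first-quadrant spectral sequence and never by itself forces degeneration. Some supporting claims are also off: vanishing of $H^{>0}(\Aut(F_n);\bQ)$ and the $H^*(\Aut(F_n);\bQ)$-module structure do not make the differentials ``determined by $E_2^{0,*}$'', since $E_2^{p,q}$ is generally non-zero for $p>0$; and the section of the fibration only splits off the bottom row $E_\infty^{*,0}$, not any column, while the fibre inclusion is not obviously split.

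The missing idea is much shorter than what you propose. The spectral sequence is natural in $V$, hence is a spectral sequence of $\bQ[GL(V)]$-modules, and one should simply restrict to the scalars $\bQ^\times\leq GL(V)$: the $q$-th row $H^*(\Aut(F_n);\wedge^q(H_\bQ\otimes V))$ is pure of weight $q$, i.e.\ $u\in\bQ^\times$ acts as multiplication by $u^q$. In characteristic zero distinct weights give distinct irreducible $\bQ^\times$-modules, so there are no equivariant maps between different rows; each differential $d_r$ changes the row index and is $\bQ^\times$-equivariant, hence vanishes. You came close --- you note that the differentials are $GL(V)$-equivariant --- but you deploy that fact only for an Euler-characteristic count rather than for the Schur-lemma observation that actually does all the work.
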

\begin{proof}
The action of the scalars $\bQ^\times \leq GL(V)$ on $V$ makes it a spectral sequence of $\bQ[\bQ^\times]$-modules, and the action of $\bQ^\times$ on the $\bQ$-vector space $\wedge^q(H_\bQ \otimes V)$ is with \emph{weight $q$}, i.e.\ $u \in \bQ^\times$ acts by scalar multiplication by $u^q$. As $\bQ$ has characteristic zero, distinct weights make $\bQ$ into distinct irreducible $\bQ[\bQ^\times]$-modules. Thus there can be no $\bQ[\bQ^\times]$-module maps between different rows of this spectral sequence, so it collapses.
\end{proof}

Furthermore, this argument identifies $H^p(\mathrm{Aut}(F_n); \wedge^q_\bQ(H_\bQ \otimes V))$ with the subspace $H^{p+q}(\mathcal{G}_n^1(K(V^*, 2));\bQ)^{(q)}$ of $H^{p+q}(\mathcal{G}_n^1(K(V^*, 2));\bQ)$ on which $\bQ^\times$ acts with weight $q$.

\begin{lem}\label{lem:Abut}
If $2(p+q) \leq n-3$ then
$H^{p+q}(\mathcal{G}_n^1(K(V^*, 2));\bQ)^{(q)}$ is zero unless $p=q$, in which case it is isomorphic to the degree $2q$ part of $\mathrm{Sym}^*(\mathrm{Sym}^{* > 0}(V[2]))$.
\end{lem}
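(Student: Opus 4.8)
The plan is to compute the stable rational cohomology of $\mathcal{G}_n^1(K(V^*,2))$ directly, using the homological stability Theorem \ref{thm:Xstab}(i) together with the homology equivalence of Theorem \ref{thm:Xlim}, and then to read off the weight-$q$ part. First I would take $X = K(V^*,2)$, which is simply-connected and path-connected, so that both theorems apply. By Theorem \ref{thm:Xstab}(i) the map $H^{p+q}(\mathcal{G}_n^1(X);\bQ) \to H^{p+q}(\mathcal{G}_{n+1}^1(X);\bQ)$ is an isomorphism for $2(p+q) \leq n-3$, so in this range the cohomology agrees with that of the homotopy colimit $\hocolim_n \mathcal{G}_n^1(X)$. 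By Theorem \ref{thm:Xlim} the latter has the same integral (hence rational) homology as $Q_0(X_+)$, and so the same rational cohomology. Therefore, in the stated range, $H^{p+q}(\mathcal{G}_n^1(K(V^*,2));\bQ) \cong H^{p+q}(Q_0(K(V^*,2)_+);\bQ)$.

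Next I would invoke the description of the rational cohomology of $Q_0(Y_+)$ recorded just after Theorem \ref{thm:Xlim}: it is the free graded-commutative algebra $S^*(\widetilde{H}^*(Y;\bQ))$ on the reduced cohomology of $Y$. For $Y = K(V^*,2)$ we have $\widetilde{H}^*(K(V^*,2);\bQ) \cong \Sym^{*>0}(V[2])$ as a graded $GL(V)$-representation (the cohomology ring of $K(V^*,2)$ is the polynomial algebra on $V = (V^*)^*$ in degree $2$; rationally there is nothing else). Hence
$$H^*(Q_0(K(V^*,2)_+);\bQ) \cong \Sym^*\left(\Sym^{*>0}(V[2])\right)$$
as graded $\bQ[GL(V)]$-algebras. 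I must be slightly careful here: the free graded-commutative algebra on a purely even-degree vector space is just the symmetric (polynomial) algebra, so there are no exterior factors to worry about since everything sits in even total degree. In particular every class lives in total degree $2j$ for some $j$, and the $\bQ^\times$-weight of a class in $\Sym^*(\Sym^{*>0}(V[2]))$ equals exactly half its cohomological degree (each generator $v \in V$ contributes weight $1$ and degree $2$, and both are additive). Thus the weight-$q$ part of $H^{p+q}(Q_0(K(V^*,2)_+);\bQ)$ is exactly the piece in total degree $2q$, which forces $p+q = 2q$, i.e. $p = q$, and is empty otherwise; when $p=q$ it is the degree-$2q$ part of $\Sym^*(\Sym^{*>0}(V[2]))$, as claimed.

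The main obstacle, such as it is, is bookkeeping rather than anything deep: one must make sure the two separate gradings — the cohomological grading and the internal $\bQ^\times$-weight grading coming from the $GL(V)$-action — are correctly correlated, and that the passage through Theorems \ref{thm:Xlim} and \ref{thm:Xstab} is $GL(V)$-equivariant (which it is, since the maps in those theorems are natural in $X$ and $GL(V)$ acts through self-maps of $K(V^*,2)$). One should also confirm that the weight-$q$ subspace is what the preceding discussion after Lemma \ref{lem:collapse} identifies with $H^p(\Aut(F_n);\wedge^q(H_\bQ \otimes V))$; this is immediate from that discussion. No delicate convergence or extension issues arise because everything is over a field of characteristic zero and concentrated in even degrees.
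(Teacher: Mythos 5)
Your argument is correct and follows exactly the same route as the paper's proof: combine Theorem \ref{thm:Xstab}(i) and Theorem \ref{thm:Xlim} to identify the stable cohomology with that of $Q_0(K(V^*,2)_+)$, describe the latter as $\mathrm{Sym}^*(\mathrm{Sym}^{*>0}(V[2]))$, and observe that the $\bQ^\times$-weight is half the cohomological degree. (One small direction slip: the stabilisation map on cohomology goes $H^*(\mathcal{G}^1_{n+1}(X)) \to H^*(\mathcal{G}^1_n(X))$, not the other way, but this does not affect the conclusion.)
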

\begin{proof}
By Theorem \ref{thm:Xlim} and Theorem \ref{thm:Xstab}, the map 
$$\tau^1_n: \mathcal{G}_n^1(K(V^*, 2)) \lra Q_{1-n}(K(V^*, 2)_+)$$
is an isomorphism on cohomology in degrees $2* \leq n-3$. With coefficients of characteristic zero, $\widetilde{H}^*(K(V^*, 2);\bQ) \cong \mathrm{Sym}^{*>0}(V[2])$, and taking the free infinite loop space has the effect of forming the free graded-commutative algebra, so in this case the symmetric algebra. The action of $\bQ^\times$ on $\mathrm{Sym}^*(\mathrm{Sym}^{* > 0}(V[2]))$ is with weight $q$ precisely in degree $2q$. 
\end{proof}

\begin{cor}
If $2(p+q) \leq n-3$ then 
$H^p(\mathrm{Aut}(F_n); \wedge^q_\bQ(H_\bQ \otimes_\bQ V))$ is zero unless $p=q$, in which case it is isomorphic to the degree $2q$ part of $\mathrm{Sym}^*(\mathrm{Sym}^{* > 0}(V[2]))$.
\end{cor}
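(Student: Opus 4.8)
The plan is to deduce this directly from Lemma~\ref{lem:collapse} and Lemma~\ref{lem:Abut}; the corollary is essentially a repackaging of those two results together with the $\bQ^\times$-weight bookkeeping, so the argument is short.

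First I would translate the group cohomology into a weight space of the abutment. The spectral sequence \eqref{eq:sseqV} is one of $\bQ[\bQ^\times]$-modules, and $\bQ^\times$ acts on the row $E_2^{p,q}=H^p(\mathrm{Aut}(F_n);\wedge^q(H_\bQ \otimes V))$ with the single weight $q$, since it acts on $V$ with weight $1$ and trivially on $H_\bQ$. By Lemma~\ref{lem:collapse} the sequence collapses at $E_2$, so the finite filtration of $H^{N}(\mathcal{G}_n^1(K(V^*,2));\bQ)$ (with $N=p+q$) has associated graded $\bigoplus_{a+b=N} E_2^{a,b}$, and the piece in filtration degree $a$ is homogeneous of weight $N-a$. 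As these weights are pairwise distinct and a $\bQ[\bQ^\times]$-homomorphism between weight spaces of different weights is zero, the filtration splits as a direct sum of $\bQ^\times$-representations. Consequently the weight-$q$ subspace $H^{p+q}(\mathcal{G}_n^1(K(V^*,2));\bQ)^{(q)}$ is canonically identified with $E_2^{p,q}=H^p(\mathrm{Aut}(F_n);\wedge^q(H_\bQ \otimes V))$ (this is exactly the identification recorded just after the proof of Lemma~\ref{lem:collapse}).

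Next I would apply Lemma~\ref{lem:Abut}: in the range $2(p+q)\le n-3$ it says this weight space vanishes unless $p=q$, in which case it is the degree-$2q$ part of $\mathrm{Sym}^*(\mathrm{Sym}^{*>0}(V[2]))$. Feeding the identification of the previous step into this yields the claimed statement.

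I do not expect a real obstacle here: both nontrivial inputs — the collapse of \eqref{eq:sseqV} and the computation of the abutment through $\tau^1_n$ — have already been established. The only point that deserves a sentence of care is the splitting of the filtration on the abutment as a $\bQ^\times$-representation, which is immediate because over a field of characteristic zero distinct weight spaces admit no nonzero homomorphisms between them, so the associated graded decomposition is a genuine direct sum decomposition compatible with the weight grading.
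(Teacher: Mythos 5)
Your proposal is correct and matches the paper's approach exactly: the corollary is obtained by combining Lemma~\ref{lem:collapse} (collapse of the spectral sequence \eqref{eq:sseqV}) with the weight-space identification stated immediately after it, and then reading off Lemma~\ref{lem:Abut}. The extra sentence you add about the filtration splitting as a $\bQ^\times$-representation because distinct weight spaces admit no nonzero maps is precisely the implicit content of the paper's remark following Lemma~\ref{lem:collapse}, so there is no divergence.
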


This proves the vanishing part of Theorem \ref{thm:Main1} (ii). We shall now use the fact that the identifications made so far are functorial in $V$, and so are in particular $GL(V)$-equivariant and can be decomposed into irreducible $GL(V)$-modules. The following is a standard consequence of Schur--Weyl duality, but we explain its proof anyway.

\begin{lem}\label{lem:SymSplit}
As a $GL(H_\bQ) \times GL(V)$-representation,
$$\wedge^q_\bQ(H_\bQ \otimes V) \cong \bigoplus_{\vert \lambda \vert = q} S_\lambda(H_\bQ) \otimes S_{\lambda'}(V),$$
where $\lambda'$ denotes the conjugate (i.e.\ transpose) Young diagram to $\lambda$.
\end{lem}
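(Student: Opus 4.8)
The plan is to prove the decomposition $\wedge^q_\bQ(H_\bQ \otimes V) \cong \bigoplus_{|\lambda|=q} S_\lambda(H_\bQ) \otimes S_{\lambda'}(V)$ by combining the $GL \times GL$ Schur--Weyl philosophy with the computation of how the sign representation of $\Sigma_q$ interacts with it. First I would note that $(H_\bQ \otimes V)^{\otimes q} \cong H_\bQ^{\otimes q} \otimes V^{\otimes q}$ as a $GL(H_\bQ) \times GL(V) \times \Sigma_q$-module, where $\Sigma_q$ acts diagonally on the right-hand side. Applying the Schur--Weyl decomposition recalled in the excerpt, $H_\bQ^{\otimes q} \cong \bigoplus_{\mu} S_\mu(H_\bQ) \otimes S^\mu$ and $V^{\otimes q} \cong \bigoplus_{\nu} S_\nu(V) \otimes S^\nu$, so that
$$(H_\bQ \otimes V)^{\otimes q} \cong \bigoplus_{\mu, \nu} S_\mu(H_\bQ) \otimes S_\nu(V) \otimes (S^\mu \otimes S^\nu)$$
with $\Sigma_q$ acting diagonally on $S^\mu \otimes S^\nu$.

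The key extraction step is that $\wedge^q_\bQ(H_\bQ \otimes V) = \mathrm{Hom}_{\Sigma_q}(S^{(1^q)}, (H_\bQ \otimes V)^{\otimes q})$, where $S^{(1^q)} = \bQ^-$ is the sign representation, since the exterior power is precisely the sign-isotypic piece of the tensor power. Thus I need to compute $\mathrm{Hom}_{\Sigma_q}(\bQ^-, S^\mu \otimes S^\nu)$ for each pair $(\mu,\nu)$. This equals the multiplicity of the sign representation in $S^\mu \otimes S^\nu$, equivalently $\dim \mathrm{Hom}_{\Sigma_q}(S^\mu, \bQ^- \otimes S^\nu) = \dim \mathrm{Hom}_{\Sigma_q}(S^\mu, S^{\nu'})$, using the standard fact that tensoring a Specht module with the sign representation yields the Specht module of the conjugate partition. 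By Schur's lemma this multiplicity is $1$ if $\mu = \nu'$ and $0$ otherwise. Substituting back, only the terms with $\nu = \mu'$ survive, giving $\wedge^q_\bQ(H_\bQ \otimes V) \cong \bigoplus_{\mu} S_\mu(H_\bQ) \otimes S_{\mu'}(V)$, which (relabeling $\mu$ as $\lambda$) is exactly the claimed formula.

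The main obstacle — really the only non-formal ingredient — is justifying the two representation-theoretic facts: that $S^\mu \otimes \bQ^- \cong S^{\mu'}$ as $\Sigma_q$-modules, and the Schur--Weyl decomposition itself, both of which are cited as standard from \cite{Procesi}. One should also be slightly careful that the decomposition is being claimed over $\bQ$ rather than $\bC$; but since all the relevant representations of $\Sigma_q$ and the polynomial representations of $GL$ are defined over $\bQ$ and remain irreducible there, no complication arises. I would also remark that one must check $\dim_\bQ V$ and $\dim_\bQ H_\bQ$ are large enough (at least $q$) for the Schur functors $S_\lambda$ to be nonzero and irreducible, though for the purpose of an isomorphism of functors this is the standard stable regime and, as noted earlier in the paper, the relevant identifications are functorial and stable in the dimension.
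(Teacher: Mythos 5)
Your proof is correct and follows essentially the same route as the paper: both expand $(H_\bQ \otimes V)^{\otimes q}$ via Schur--Weyl on each tensor factor and then extract the sign-isotypic component, using $S^\mu \otimes \bQ^- \cong S^{\mu'}$. The only cosmetic difference is that the paper phrases the extraction as $\Sigma_q$-invariants of $(H_\bQ \otimes V)^{\otimes q} \otimes \bQ^-$ rather than $\mathrm{Hom}_{\Sigma_q}(\bQ^-,-)$, which over $\bQ$ is the same thing.
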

\begin{proof}
The left-hand side is the $\Sigma_q$-invariants in the $GL(H_\bQ) \times GL(V) \times \Sigma_q$-module $(H_\bQ \otimes_\bQ V)^{\otimes q} \otimes \bQ^- \cong H_\bQ^{\otimes q} \otimes V^{\otimes q}\otimes \bQ^-$. By our definition of Schur functors we have an isomorphism of $GL(V) \times \Sigma_q$-modules
$$V^{\otimes q} \cong \bigoplus_{\lambda \vdash q} S_\lambda(V) \otimes S^\lambda$$
where $S^\lambda$ is the Specht module. Similarly $H_\bQ^{\otimes q} \cong \bigoplus_{\mu \vdash q} S_\mu(H_\bQ) \otimes S^\mu$, so tensoring them together and taking $\Sigma_q$-invariants, using $(S^\mu \otimes S^\lambda \otimes \bQ^-)^{\Sigma_q} = \bQ^{\delta_{\mu \lambda'}}$, the result follows.
\end{proof}

Putting the above together, we have an isomorphism of $GL(V)$-modules
$$\bigoplus_{\vert \lambda \vert = q} H^q(\Aut(F_n) ; S_\lambda(H_\bQ)) \otimes S_{\lambda'}(V) \cong [\mathrm{Sym}^*(\mathrm{Sym}^{* > 0}(V[2]))]_{2q}.$$
Choosing $V$ to be at least $q$-dimensional, the $S_\mu(V)$ are then distinct non-zero irreducible $GL(V)$-modules, so by Schur's lemma applying $\mathrm{Hom}_{GL(V)}(S_{\lambda'}(V),-)$ gives
$$H^q(\Aut(F_n) ; S_\lambda(H_\bQ)) \cong \mathrm{Hom}_{GL(V)}(S_{\lambda'}(V),\mathrm{Sym}^*(\mathrm{Sym}^{* > 0}(V)))$$
as long as $n \geq 2q+3$. Using $H_\bQ^{\otimes q} = \bigoplus_{\vert \lambda \vert=q} S_\lambda(H_\bQ) \otimes S^\lambda$, we obtain
$$H^q(\Aut(F_n) ; H_\bQ^{\otimes q}) \cong \mathrm{Hom}_{GL(V)}\left(\bigoplus_{\vert \lambda \vert=q}S_{\lambda'}(V) \otimes S^\lambda,\mathrm{Sym}^*(\mathrm{Sym}^{* > 0}(V))\right),$$
and using that $S^{\lambda'} \cong S^\lambda \otimes \bQ^-$ we can write the right-hand side as
$$\mathrm{Hom}_{GL(V)}\left(V^{\otimes q},\mathrm{Sym}^*(\mathrm{Sym}^{* > 0}(V))\right)\otimes \bQ^-.$$
Along with Proposition \ref{prop:IdentifySymSym} this finishes the proof of Theorem \ref{thm:Main1}.

\begin{rem}\label{rem:OddConvention}
We could have used $K(V^*, 3)$ instead of $K(V^*, 2)$. In this case the argument goes through, the analogue of Lemma \ref{lem:SymSplit} is $\mathrm{Sym}^q(H_\bQ \otimes V) \cong \bigoplus_{\vert \lambda \vert = q} S_\lambda(H_\bQ) \otimes S_{\lambda}(V)$, and the result obtained is
$$H^q(\Aut(F_n) ; S_\lambda(H_\bQ)) \cong \mathrm{Hom}_{GL(V)}(S_{\lambda}(V),S^*(\wedge^{* > 0}(V)))$$
whenever $\dim_\bQ V \geq q$ and $n \geq 2q+3$, where $S^*(-)$ denotes the free graded-commutative algebra.

A consequence of this is that the multiplicity of $S_\lambda(V)$ in $S^*(\wedge^{* > 0}(V))$ is the same as the multiplicity of $S_{\lambda'}(V)$ in $\mathrm{Sym}^*(\mathrm{Sym}^{*>0}(V))$, which does not seem obvious to the author.
\end{rem}

\subsection{Proof of Theorem \ref{thm:Main2}}\label{sec:PfB}

We will make use of the following lemma, which follows from Kawazumi \cite[Theorem 7.1]{KawazumiMagnus}. It also follows from general principles: the Becker--Gottlieb transfer with local coefficients.

\begin{lem}\label{lem:OutSplit}
For any $\bZ[\tfrac{1}{n-1}][\mathrm{Out}(F_n)]$-module $M$, the map
$$H^*(\mathrm{Out}(F_n);M) \lra H^*(\mathrm{Aut}(F_n);M)$$
is split injective, and
$$H^*(\mathrm{Aut}(F_n);M) \cong H^*(\mathrm{Out}(F_n);M) \oplus H^{*-1}(\mathrm{Out}(F_n);H^* \otimes M).$$
\end{lem}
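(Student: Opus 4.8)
The plan is to run the Lyndon--Hochschild--Serre spectral sequence of the group extension $1 \to F_n \to \Aut(F_n) \to \Out(F_n) \to 1$ (here $n\geq 2$, so that $F_n \cong \mathrm{Inn}(F_n)$) and to show that it degenerates by a transfer argument. Since $M$ is pulled back along $\Aut(F_n) \to \Out(F_n)$, the normal subgroup $F_n$ acts trivially on $M$, and $F_n$ being free it has cohomological dimension $1$; hence the spectral sequence has only the two rows
$$E_2^{p,0} = H^p(\Out(F_n); M), \qquad E_2^{p,1} = H^p(\Out(F_n); H^1(F_n;\bZ)\otimes_\bZ M),$$
where I have used the canonical isomorphism $H^1(F_n;M) \cong H^1(F_n;\bZ)\otimes_\bZ M = H^*\otimes M$ of $\Out(F_n)$-modules (legitimate because $H_1(F_n;\bZ)$ is free of finite rank, the action being the diagonal one coming from the outer action on $H_1(F_n;\bZ)$ and the given action on $M$). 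The only possibly nonzero differential is $d_2 \colon E_2^{p,1} \to E_2^{p+2,0}$, and the edge homomorphism $E_2^{p,0} \to H^p(\Aut(F_n);M)$ is exactly the restriction map $\pi^*$.

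Next I would establish that $\pi^*\colon H^*(\Out(F_n);M) \to H^*(\Aut(F_n);M)$ is split injective once $n-1$ is invertible in the coefficients. The classifying-space fibration $\pi\colon B\Aut(F_n) \to B\Out(F_n)$ has fibre $BF_n \simeq \vee^n S^1$, a finite complex of Euler characteristic $1-n$, and therefore admits a Becker--Gottlieb transfer $\trf_\pi\colon \Sigma^\infty_+ B\Out(F_n) \to \Sigma^\infty_+ B\Aut(F_n)$ satisfying $\trf_\pi^*\circ\pi^* = \chi(\vee^n S^1)\cdot\mathrm{id} = (1-n)\cdot\mathrm{id}$ on $H^*(\Out(F_n);\cM)$ for any local system $\cM$ on $B\Out(F_n)$, in particular for the system determined by $M$. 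Over $\bZ[\tfrac{1}{n-1}]$ this makes $\tfrac{1}{1-n}\trf_\pi^*$ a retraction of $\pi^*$. (Alternatively one may simply quote Kawazumi \cite[Theorem 7.1]{KawazumiMagnus}.) I expect this to be the only genuinely delicate point: one must be sure that the transfer identity $\pi_!\pi^* = \chi(\text{fibre})$ persists with coefficients in a local system pulled back from the base, and that the transfer is available for a fibration whose fibre is a finite complex rather than a closed manifold — both standard, but worth spelling out.

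Finally I would assemble the two ingredients. Split injectivity of the edge map $\pi^* = \bigl(E_2^{p,0} \to H^p(\Aut(F_n);M)\bigr)$ forces $E_2^{p,0} \to E_\infty^{p,0}$ to be an isomorphism, hence $d_2\colon E_2^{p-2,1}\to E_2^{p,0}$ vanishes for all $p$; as this is the only differential, the spectral sequence collapses at $E_2$. Thus $H^p(\Aut(F_n);M)$ is an extension of $E_\infty^{p-1,1} = H^{p-1}(\Out(F_n);H^*\otimes M)$ by $E_\infty^{p,0} = \pi^*\bigl(H^p(\Out(F_n);M)\bigr)$, and since $\pi^*$ is split this extension splits, giving
$$H^*(\Aut(F_n);M) \cong H^*(\Out(F_n);M)\oplus H^{*-1}(\Out(F_n);H^*\otimes M)$$
as claimed, with the first summand included via $\pi^*$. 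Everything after the transfer step is formal manipulation of a two-row spectral sequence.
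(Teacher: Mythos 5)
Your proof is correct, and it is precisely the ``general principles'' argument the paper alludes to: the paper does not write out a proof but only remarks that the lemma follows either from Kawazumi or from the Becker--Gottlieb transfer with local coefficients, and your write-up supplies the transfer argument in full. One small terminological slip: the edge map $E_2^{p,0}\to H^p(\Aut(F_n);M)$ of the Lyndon--Hochschild--Serre spectral sequence is the \emph{inflation} map (you later correctly call it $\pi^*$), not ``restriction,'' but this does not affect the argument.
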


Theorem \ref{thm:Main2} (i) and the first part of Theorem \ref{thm:Main2} (ii) follows immediately from this lemma, as it implies that $H^i(\Out(F_n);S_\lambda(H^*_\bQ))$ and $H^i(\Out(F_n);S_\lambda(H_\bQ))$ are summands of $H^i(\Aut(F_n);S_\lambda(H^*_\bQ))$ and $H^i(\Aut(F_n);S_\lambda(H_\bQ))$ respectively, so vanish under the stated assumptions. 

It remains to prove the second part of Theorem \ref{thm:Main2} (ii). To do this, we again consider a finite-dimensional $\bQ$-vector space $V$ and consider the diagram
\begin{equation*}
\xymatrix{
K(H^* \otimes_\bZ V^*,1) \ar[d]\\
\mathrm{map}(\vee^n S^1, K(V^*, 2)) \ar[d]^p \ar[r] & \mathcal{G}_n(K(V^*, 2))  \ar[r]^-{\pi} &\mathcal{G}_n(*) \simeq B\mathrm{Out}(F_n)\\
K(V^*, 2)
}
\end{equation*}
where the row is a (split) fibration and the column is a (trivial) fibration. This gives a spectral sequence
\begin{equation}\label{eq:sseqOutV}
H^*(\Out(F_n) ; \wedge^*(H_\bQ \otimes V[1])) \otimes \mathrm{Sym}^*(V[2]) \Longrightarrow H^*(\mathcal{G}_n(K(V^*, 2));\bQ).
\end{equation}

Rather than a weight argument, which is no longer conclusive, we will deduce the degeneration of this spectral sequence from the vanishing results already established, namely that $H^i(\Out(F_n); S_\lambda(H_\bQ))=0$ in degrees $2i \leq n-\vert \lambda \vert-3$ if $i \neq \vert \lambda \vert$. By Lemma \ref{lem:SymSplit} this implies that $H^p(\Out(F_n) ; \wedge^q(H_\bQ \otimes V[1]))=0$ as long as $2p \leq n-q-3$ and $p \neq q$, so in bidegrees $(p,q)$ such that $2p \leq n-q-3$ the $E^2$-page of the spectral sequence \eqref{eq:sseqOutV} is
$$\bigoplus_{\substack{a \geq 0\\ b \geq 0}} H^a(\Out(F_n) ; \wedge^a(H_\bQ \otimes V[1])) \otimes \mathrm{Sym}^b(V[2]),$$
where the $(a, b)$th summand lies in $E^2_{a, a+2b}$. In particular, all summands in this range lie in even total degree, and so there can be no differentials.

\begin{lem}\label{lem:OutResult}
There is an isomorphism of graded $GL(V)$-representations
$$\bigoplus_{\lambda} H^*(\Out(F_\infty) ; S_\lambda(H_\bQ)) \otimes S_{\lambda'}(V[1])  \cong \mathrm{Sym}^*(\mathrm{Sym}^{*>1}(V[2])),$$
and $\dim_\bQ H^{\vert \lambda \vert}(\Out(F_n) ; S_\lambda(H_\bQ))$ is independent of $n$ for $n \geq 4\vert \lambda \vert+3$.
\end{lem}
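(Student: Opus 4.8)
The plan is to read both statements off the already-established degeneration of the spectral sequence \eqref{eq:sseqOutV} together with the known form of its abutment. Recall that in bidegrees $(p,q)$ with $2p \leq n-q-3$ the $E^2$-page of \eqref{eq:sseqOutV} consists only of the summands $H^a(\Out(F_n);\wedge^a(H_\bQ \otimes V[1])) \otimes \mathrm{Sym}^b(V[2])$, the $(a,b)$th one lying in bidegree $(a,a+2b)$, and that these all lie in even total degree; a short check of indices shows that this description holds, and that $E^2 = E^\infty$, throughout the range of total degrees $d$ with $2d \leq n-3$. On the other hand, Theorem \ref{thm:Xstab}(ii) shows that $\mathcal{G}_n^1(K(V^*,2)) \to \mathcal{G}_n(K(V^*,2))$ is a $\bQ$-cohomology isomorphism in degrees $2* \leq n-3$, and, just as in the proof of Lemma \ref{lem:Abut}, Theorem \ref{thm:Xlim} and Theorem \ref{thm:Xstab}(i) identify $H^*(\mathcal{G}_n^1(K(V^*,2));\bQ)$ with $H^*(Q_{1-n}(K(V^*,2)_+);\bQ) \cong \mathrm{Sym}^*(\mathrm{Sym}^{*>0}(V[2]))$ in the same range. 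Comparing, we obtain an isomorphism of graded $GL(V)$-representations
$$\Bigl(\bigoplus_{a \geq 0} H^a(\Out(F_n);\wedge^a(H_\bQ \otimes V[1]))\Bigr) \otimes \mathrm{Sym}^*(V[2]) \;\cong\; \mathrm{Sym}^*(\mathrm{Sym}^{*>0}(V[2]))$$
in all total degrees $d$ with $2d \leq n-3$.

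The next step is to cancel the factor $\mathrm{Sym}^*(V[2])$. Writing $\mathrm{Sym}^{*>0}(V[2]) = V[2] \oplus \mathrm{Sym}^{*>1}(V[2])$, the right-hand side factors as $\mathrm{Sym}^*(V[2]) \otimes \mathrm{Sym}^*(\mathrm{Sym}^{*>1}(V[2]))$. Passing to $GL$-characters in $\Lambda[[t]]$ --- $\Lambda$ the ring of symmetric functions, $t$ recording the grading (which, for every representation in sight, is twice the polynomial degree in $V$) --- turns the displayed isomorphism into $f(t)\cdot u(t) = g(t)\cdot u(t)$, where $u(t) = \sum_{b \geq 0} h_b t^{2b}$ is the character of $\mathrm{Sym}^*(V[2])$ and $f,g$ are the characters of $\bigoplus_a H^a(\Out(F_n);\wedge^a(H_\bQ \otimes V[1]))$ and of $\mathrm{Sym}^*(\mathrm{Sym}^{*>1}(V[2]))$. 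Since $u(t)$ has constant term $1$ it is a unit, so $f=g$; as the Schur functions form a $\bZ$-basis of $\Lambda$ this gives
$$\bigoplus_{a} H^a(\Out(F_n);\wedge^a(H_\bQ \otimes V[1])) \;\cong\; \mathrm{Sym}^*(\mathrm{Sym}^{*>1}(V[2]))$$
as graded $GL(V)$-representations in total degrees $d$ with $2d \leq n-3$; letting $n \to \infty$ removes the constraint. Decomposing the left-hand side by Lemma \ref{lem:SymSplit} --- which gives $\wedge^q(H_\bQ \otimes V[1]) \cong \bigoplus_{|\lambda|=q} S_\lambda(H_\bQ) \otimes S_{\lambda'}(V[1])$ --- and using that $H^i(\Out(F_\infty);S_\lambda(H_\bQ)) = 0$ for $i \neq |\lambda|$ yields the first asserted isomorphism. (Alternatively, the cancellation can be done by noting that both sides of the displayed isomorphism are free modules over $\mathrm{Sym}^*(V[2]) = H^*(K(V^*,2);\bQ)$ and reducing modulo the augmentation ideal.)

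For the second statement I would keep track of the range quantitatively. The summand $H^a(\Out(F_n);\wedge^a(H_\bQ \otimes V[1]))$ of the left-hand side lives in total degree $2a$, so the cancellation pins it down --- identifying it with the $n$-independent $GL(V)$-representation $[\mathrm{Sym}^*(\mathrm{Sym}^{*>1}(V[2]))]_{2a}$ --- as soon as $4a \leq n-3$, i.e.\ $n \geq 4a+3$. Taking $a = |\lambda|$ and applying Lemma \ref{lem:SymSplit} again: for $\dim_\bQ V \geq |\lambda|$ the representations $S_\mu(V)$ with $|\mu| = |\lambda|$ are distinct irreducibles, so by Schur's lemma $\dim_\bQ H^{|\lambda|}(\Out(F_n);S_\lambda(H_\bQ))$ equals the multiplicity of $S_{\lambda'}(V)$ in $[\mathrm{Sym}^*(\mathrm{Sym}^{*>1}(V[2]))]_{2|\lambda|}$, a number with no dependence on $n$.

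The point requiring the most care is the interlocking of the two ranges --- the window $2p \leq n-q-3$ in which the $E^2$-page of \eqref{eq:sseqOutV} has the stated shape, and the window $2* \leq n-3$ in which the abutment is identified --- since it is their overlap that produces the precise bound $n \geq 4|\lambda|+3$; the cancellation of $\mathrm{Sym}^*(V[2])$ and the representation theory that follows are then routine.
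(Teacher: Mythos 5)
Your proof is correct and follows essentially the same route as the paper's: read off the degenerated spectral sequence \eqref{eq:sseqOutV}, identify the abutment via Theorems \ref{thm:Xlim} and \ref{thm:Xstab}, and cancel the $\mathrm{Sym}^*(V[2])$ factor using that it is a unit (which you phrase as inverting a power series of characters, and the paper phrases as invertibility of the graded virtual representation under $\otimes$, or equivalently an induction on degree). Your range-tracking for the second statement, giving $n \geq 4|\lambda|+3$, matches the paper's inductive bookkeeping.
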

\begin{proof}
Writing $\wedge^a(H_\bQ \otimes_\bQ V[1]) \cong \bigoplus_{\vert\mu\vert =a} S_\mu(H_\bQ) \otimes S_{\mu'}(V[1])$ using Lemma \ref{lem:SymSplit}, and then identifying the abutment of the spectral sequence \eqref{eq:sseqOutV} as in Lemma \ref{lem:Abut}, in total degree $2r$ we obtain 
$$\bigoplus_{a+b=r}\bigoplus_{\vert\mu\vert=a} H^a(\Out(F_n) ; S_\mu(H_\bQ)) \otimes S_{\mu'}(V) \otimes \mathrm{Sym}^b(V) \cong [\mathrm{Sym}^*(\mathrm{Sym}^{*>0}(V[2]))]_{2r}$$
as long as $4r \leq n-3$.

The right-hand side is independent of $n$. For a partition $\lambda \vdash r$ we find that
$$\sum_{\substack{a+b=r\\ \vert\mu\vert=a}}  \dim_\bQ H^a(\Out(F_n) ; S_\mu(H_\bQ)) \cdot \dim_\bQ \mathrm{Hom}_{GL(V)}(S_{\lambda'}(V), S_{\mu'}(V) \otimes \mathrm{Sym}^b(V))$$
is independent of $n$ as long as $n \geq 4\vert \mu\vert+3$. By induction we may assume that all terms with $\vert \mu \vert < r$ are also independent of $n$, leaving just the terms with $b=0$, which gives $\dim_\bQ H^{\vert \lambda \vert}(\Out(F_n) ; S_\lambda(H_\bQ))$ by Schur's lemma. This proves the second part.
 
Passing to the limit $ n \to \infty$ and summing over all terms above gives an isomorphism of graded $GL(V)$-representations
$$\bigoplus_{\lambda} H^*(\Out(F_\infty) ; S_\lambda(H_\bQ)) \otimes S_{\lambda'}(V[1]) \otimes \mathrm{Sym}^*(V[2]) \cong \mathrm{Sym}^*(\mathrm{Sym}^{*>0}(V[2])).$$
Because the graded $GL(V)$-representation
$$\mathrm{Sym}^*(V[2]) = \bQ \oplus \mathrm{Sym}^1(V[2]) \oplus \mathrm{Sym}^2(V[2])  \oplus \cdots $$
is the trivial representation in grading zero, the isomorphism in the lemma can be established by induction on degree similarly to the argument above. More conceptually, the graded virtual representation $\mathrm{Sym}^*(V)$ is invertible under $\otimes$, as it is the trivial representation in grading zero, and cancelling it from both sides gives the required isomorphism.
\end{proof}

Proceeding as in the proof of Theorem \ref{thm:Main1}, we obtain
$$H^q(\Out(F_n);H_\bQ^{\otimes q}) \cong \mathrm{Hom}_{GL(V)}(V^{\otimes q}, \mathrm{Sym}^*(\mathrm{Sym}^{*>1}(V)))\otimes \bQ^-$$
from which Proposition \ref{prop:IdentifySymSym} implies Theorem \ref{thm:Main2}.

\subsection{Proof of Theorem \ref{thm:Main4}}

By Remark 2 after Theorem 4.2.2 of \cite{Manivel}, there is an identity
$$\bigoplus_\lambda \nu^\infty(\lambda) S_\lambda(V)  \cong \mathrm{Sym}^*(\mathrm{Sym}^{*>1}(V))$$
of $GL(V)$-modules. Compare with Lemma \ref{lem:OutResult} for the first statement; the second statement follows from this and \eqref{eq:OutAutTrade}.

\section{Integral and torsion calculations}\label{sec:integral}

The general technique we have been using is not confined to rational coefficients. To give an example of how it may be used more generally, we shall now develop a ``tame" strengthening of Theorem \ref{thm:Main1}, namely Theorem \ref{thm:Main5} (ii). Let us first explain how Theorem \ref{thm:Main5} (ii) implies Theorem \ref{thm:Main5} (i).

\begin{proof}[Proof of Theorem \ref{thm:Main5} (i)]
By Theorem \ref{thm:Main1} we have that $H^1(\Aut(F_\infty);H_\bQ) = \bQ$ and all other rational cohomology groups vanish. By Theorem \ref{thm:Main5} (ii), for each prime number $p$, $H^*(\Aut(F_\infty);H \otimes \bZ_{(p)})$ is a free $H^*(\Aut(F_\infty);\bZ_{(p)})$-module, so by the calculation above is a free module on a single generator in degree 1. Hence $H^1(\Aut(F_n);H)$ is $\bZ$, as it becomes a free $\bZ_{(p)}$-module of rank 1 when localised at any prime $p$. Choosing a generator for this group gives a map
$$H^*(\Aut(F_n);\bZ) \lra H^{*+1}(\Aut(F_n);H)$$
which must be an isomorphism, as it is so when localised at every prime.
\end{proof}

In order to prove Theorem \ref{thm:Main5} (ii) we first revisit some of the techniques we have used earlier and develop them in the $p$-local rather than rational setting.

Let $q$ be fixed and $p$ be a prime number such that $p > q$. Recall that to a partition $\lambda \vdash q$ and a tableau $T$ of shape $\lambda$ there is an associated \emph{Young symmetriser} $c_T \in \bZ[\Sigma_q]$, which satisfies $c_T^2 = \frac{q!}{\dim_\bQ S^\lambda} c_T$. As $q!$ is a $p$-local unit we may form the element $e_T := \frac{\dim_\bQ S^\lambda}{q!} c_T \in \bZ_{(p)}[\Sigma_q]$ and this is an idempotent. If $g \in \Sigma_q$ then $e_{gT} = g e_T g^{-1}$, so there is a conjugacy class of idempotent associated to each $\lambda$. We write $e_\lambda$ for the idempotent associated to the canonical tableau for $\lambda$. The Specht module is defined by $S^\lambda := \bQ[\Sigma_q]\cdot e_\lambda$, and by analogy we define the $p$-local Specht module by $S^\lambda_{(p)} := \bZ_{(p)}[\Sigma_q]\cdot e_\lambda$. It is a free $\bZ_{(p)}$-module of rank $\dim_\bQ S^\lambda$, and is indecomposable as a $\bZ_{(p)}[\Sigma_q]$-module (as $S^\lambda_{(p)} \otimes \bQ = S^\lambda$). These idempotents satisfy $e_T \cdot e_S = 0$ if $T$ and $S$ are tableaux of different shapes. Furthermore we have a decomposition into primitive idempotents
$$1 = \sum_{\lambda \vdash q} \sum_{\substack{\text{standard tableaux}\\ \text{$T$ of shape $\lambda$}}} e_T \in \bZ_{(p)}[\Sigma_q].$$

If $M$ is a $\bZ_{(p)}$-module then $M^{\otimes q}$ is a $\bZ_{(p)}[\Sigma_q]$-module, and we define the $p$-local Schur functor by $S_\lambda(M) := e_\lambda(M^{\otimes q})$. There is then a natural map
$$\phi : \bigoplus_{\lambda \vdash q}  S^\lambda_{(p)} \otimes_{\bZ_{(p)}} S_\lambda(M) \lra M^{\otimes q}$$
of $\bZ_{(p)}[\Sigma_q]$-modules given by $(x \cdot e_\lambda) \otimes e_\lambda(m_1 \otimes \cdots \otimes m_q)  \mapsto x \cdot e_\lambda(m_1 \otimes \cdots \otimes m_q)$. For an inverse, define
$$\psi(m_1 \otimes \cdots \otimes m_q) = \sum_{\lambda \vdash q} \sum_{\substack{\text{standard tableaux}\\ \text{$T$ of shape $\lambda$}}} (g_T \cdot e_\lambda) \otimes e_\lambda(g_T^{-1}(m_1 \otimes \cdots \otimes m_q))$$
where for a tableau $T$ of shape $\lambda$ we write $e_T = g_T e_\lambda g_T^{-1}$. This establishes the Schur--Weyl decomposition in this setting.

We now require a partial analogue of the notion of \emph{weights} which we used in the proof of Lemma \ref{lem:collapse}. For a $\bZ_{(p)}$-module $M$ and an integer $t$, write $M(t)$ for the $\bZ_{(p)}[\bZ_{(p)}^\times]$-module which is the same as a $\bZ_{(p)}$-module and on which $u \in \bZ_{(p)}^\times$ acts as scalar multiplication by $u^t$; say that it is a module which is \emph{pure of weight $t$}. 

\begin{lem}\label{lem:LocalWeight}
If $M$ and $N$ are  $\bZ_{(p)}$-modules then
$$\mathrm{Ext}_{\bZ_{(p)}[\bZ_{(p)}^\times]}^*(M(t), N(t'))=0$$
if $0 < \vert t - t'\vert < p-1$.
\end{lem}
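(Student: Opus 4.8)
The plan is to reduce the statement to a computation of the group cohomology of the abelian group $\bZ_{(p)}^\times$ with coefficients in a one-dimensional character, and then to exploit the fact that $\bZ_{(p)}^\times$ contains lots of roots of unity of order prime to $p$ --- in fact all $(p-1)$-st roots of unity, coming from the Teichm\"uller lift $\bF_p^\times \hookrightarrow \bZ_p^\times \cap \bZ_{(p)}$. First I would observe that $\mathrm{Ext}^*_{\bZ_{(p)}[\bZ_{(p)}^\times]}(M(t), N(t'))$ only depends on the character $\chi = (u \mapsto u^{t-t'})$, since $M(t)$ and $N(t')$ differ from the untwisted modules by tensoring with invertible one-dimensional characters: indeed $\mathrm{Ext}^*(M(t),N(t')) \cong \mathrm{Ext}^*(M, N(t'-t)) \cong H^*(\bZ_{(p)}^\times; \mathrm{Hom}_{\bZ_{(p)}}(M,N)(t'-t))$ by the usual adjunction once $M$ is resolved by free $\bZ_{(p)}[\bZ_{(p)}^\times]$-modules, using that $M$ as a $\bZ_{(p)}$-module is a module over a PID. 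So it suffices to show $H^*(\bZ_{(p)}^\times; A(s)) = 0$ for any $\bZ_{(p)}$-module $A$ whenever $0 < |s| < p-1$.

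The key step is a transfer/averaging argument using the finite cyclic subgroup $C = \mu_{p-1} \leq \bZ_{(p)}^\times$ of Teichm\"uller units. Since $[\,\bZ_{(p)}^\times : C\,]$ need not be finite, I cannot transfer along $C \leq \bZ_{(p)}^\times$ directly; instead I would argue that multiplication by the element $N_C = \sum_{\zeta \in C}\zeta \in \bZ_{(p)}[\bZ_{(p)}^\times]$, or rather by the idempotent $\tfrac{1}{p-1}N_C$ (legitimate since $p-1$ is a $p$-local unit), acts on the coefficient character $A(s)$ as $\sum_{\zeta\in C}\zeta^s$. For $0 < |s| < p-1$ this sum is a sum over a nontrivial character of the cyclic group $C$ of order $p-1$, hence equals $0$. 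On the other hand, $\tfrac{1}{p-1}N_C$ is a central idempotent in $\bZ_{(p)}[C]$ and I would use it --- more precisely, I would decompose the coefficient module according to the central idempotents of $\bZ_{(p)}[C]$ and note that the cohomology $H^*(\bZ_{(p)}^\times; -)$ respects this decomposition (as $\bZ_{(p)}[C]$ is a commutative $\bZ_{(p)}$-algebra acting on everything), so the summand corresponding to the character $s$ of $C$ is annihilated because the relevant idempotent acts both as the identity on that summand and as $0$ on the coefficients.

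Concretely, the cleanest way to phrase this: the group ring map $\bZ_{(p)}[\bZ_{(p)}^\times] \to \prod_{j \bmod (p-1)} R_j$ splitting off the $j$-th eigenspace of the $C$-action makes $A(s)$ a module supported entirely in the factor $j \equiv s$, while the "diagonal" $C \leq \bZ_{(p)}^\times$ forces that factor's contribution to group cohomology to be killed when $s \not\equiv 0$; since $0 < |s| < p-1$ guarantees $s \not\equiv 0 \pmod{p-1}$, we get vanishing. The main obstacle I anticipate is purely bookkeeping: making precise that $H^*(\bZ_{(p)}^\times; -)$, an $\mathrm{Ext}$ over the big group ring $\bZ_{(p)}[\bZ_{(p)}^\times]$, decomposes compatibly with the idempotent decomposition of the subring $\bZ_{(p)}[C]$ --- this is standard since these idempotents are central in the big ring too (as $\bZ_{(p)}^\times$ is abelian), so $\mathrm{Ext}$ splits as a product over the idempotents, but one should state it carefully. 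The hypothesis $|t-t'| < p-1$ is exactly what is needed so that $s = t-t'$ is not divisible by $p-1$, i.e. so that the character of $C$ is nontrivial; and $|t-t'|>0$ rules out the trivial character $s=0$.
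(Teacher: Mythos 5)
Your plan contains a genuine gap at its very foundation: the group $\mu_{p-1}$ of $(p-1)$-st roots of unity is \emph{not} a subgroup of $\bZ_{(p)}^\times$ once $p \geq 5$. Here $\bZ_{(p)}$ is the localization of $\bZ$ at $p$, sitting inside $\bQ$, and the only roots of unity in $\bQ^\times$ (hence in $\bZ_{(p)}^\times$) are $\pm 1$. The Teichm\"uller lift $\bF_p^\times \hookrightarrow \bZ_p^\times$ lands in the $p$-adic completion $\bZ_p$, not in $\bZ_{(p)}$; the intersection $\bZ_p^\times \cap \bQ$ is just $\bZ_{(p)}^\times$ again and contains no new torsion. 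Consequently the element $N_C = \sum_{\zeta \in \mu_{p-1}} \zeta$ does not live in $\bZ_{(p)}[\bZ_{(p)}^\times]$, the idempotent $\tfrac{1}{p-1}N_C$ is not available, and the entire averaging/eigenspace decomposition along $C$ has nothing to decompose. The reduction to $H^*(\bZ_{(p)}^\times; A(s))$ with $s = t - t'$ is fine, and your observation that $0 < |s| < p-1$ exactly says $s \not\equiv 0 \pmod{p-1}$ is the right arithmetic input, but the mechanism you use to exploit it collapses.

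The paper's proof uses the same arithmetic input but in a way that only needs units of $\bZ_{(p)}$, not of an extension: after reducing to $t'=0$ and $M = \bZ_{(p)}$, it identifies $\mathrm{Hom}_{\bZ_{(p)}[\bZ_{(p)}^\times]}(\bZ_{(p)}(t), N(0))$ with $\{x \in N : (u^t-1)x = 0 \ \forall u\}$, and invokes the number-theoretic fact that the gcd of $\{k^t - 1 : p \nmid k\}$ is prime to $p$ when $0 < |t| < p-1$, forcing this submodule to vanish; one then passes to derived functors and to general $M$. A cleaner variant you might prefer, which avoids gcds entirely and also needs no roots of unity: a central element $u \in \bZ_{(p)}^\times \subset \bZ_{(p)}[\bZ_{(p)}^\times]$ acts on $\mathrm{Ext}^*_{\bZ_{(p)}[\bZ_{(p)}^\times]}(M(t),N(t'))$ both through the source (as $u^t$) and through the target (as $u^{t'}$), and these two actions agree, so $u^{t'}(u^{t-t'}-1)$ annihilates all these $\mathrm{Ext}$ groups. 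Since $t - t' \not\equiv 0 \pmod{p-1}$, pick an integer $u$ prime to $p$ whose class in $\bF_p^\times$ satisfies $u^{t-t'} \not\equiv 1 \pmod p$; then $u^{t-t'}-1$ is a unit in $\bZ_{(p)}$, and vanishing follows. This preserves the spirit of what you were after (use the nontriviality of the character $s \bmod (p-1)$) but replaces the unavailable idempotent with a single well-chosen unit.
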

\begin{proof}
By the natural isomorphisms $X(t) \otimes_{\bZ_{(p)}[\bZ_{(p)}^\times]} \bZ_{(p)}(s) \cong X(t+s)$ it is enough to establish the lemma for $t'=0$. In this case consider the functor
$$F_{M,t}(N) := \mathrm{Hom}_{\bZ_{(p)}[\bZ_{(p)}^\times]}(M(t), N(0)) : \bZ_{(p)}\text{-Mod} \lra \mathbf{Ab}.$$
When $M=\bZ_{(p)}$ this satisfies
$$F_{\bZ_{(p)}, t}(N) = \{x \in N \, \vert \, (u^t-1)x=0 \text{ for all } u \in \bZ_{(p)}^{\times}\}.$$
By \cite[Lemma 2.12]{JXII} the $\gcd$ of the numbers $k^t-1$ over all integers $k$ coprime to $p$ is itself coprime to $p$ as long as $0 < \vert t \vert < p-1$, and hence $F_{\bZ_{(p)}, t}(N)=0$ under this assumption on $t$. The functor $N \mapsto N(0)$ is exact, so taking derived functors of $F_{\bZ_{(p)}, t}$ shows that the claim in the lemma holds for $M=\bZ_{(p)}$, and hence for $M$ any free $\bZ_{(p)}$-module. The claim in general follows by resolving $M$ by free modules.
\end{proof}

We now give the proof of Theorem \ref{thm:Main5} (ii), which states that under our assumptions on $p$ and $q$, $H^*(\Aut(F_\infty);S_\lambda(H_{(p)}))$ is a free $H^*(\Aut(F_\infty); \bZ_{(p)})$-module.

\begin{proof}[Proof of Theorem \ref{thm:Main5} (ii)]
Let $k$ be an odd integer, and let $S^k_{(p)}$ be a choice of model for the $p$-local $k$-sphere. For each unit $u \in \bZ_{(p)}^\times$ we may find a map $f_u : S^k_{(p)} \to S^k_{(p)}$ inducing multiplication by $u$ on $H_k(S^k_{(p)};\bZ)$, and these satisfy $f_u \circ f_v \simeq f_{u\cdot v}$. Set $Y := (S^k_{(p)})^q$, and consider the space $\mathcal{G}_n^1(Y)$, the Serre fibration
\begin{equation}\label{eq:LocalFibn}
\map_*(\vee^n S^1, Y) \lra \mathcal{G}_n^1(Y) \lra \mathcal{G}_n^1(*) \simeq B\Aut(F_n),
\end{equation}
and its associated Serre spectral sequence.

Recall that the $\bZ_{(p)}$-cohomology ring of $\Omega S^k_{(p)}$ is the divided power algebra $\Gamma_{\bZ_{(p)}}^*(x_{k-1})$ on the class obtained by looping a generator of $H^k(S^k;\bZ_{(p)})$. This identifies the $\bZ_{(p)}$-cohomology ring of $\map_*(\vee^n S^1, Y)$ with $\Gamma_{\bZ_{(p)}}^*(H_{(p)}[k-1])^{\otimes q}$, so, taking the limit $n \to \infty$, there is a spectral sequence
$$E_2^{*,*}=H^*(\Aut(F_\infty) ; \Gamma_{\bZ_{(p)}}^*(H_{(p)}[k-1])^{\otimes q}) \Longrightarrow H^*(Q_0(S^0) \times QY;\bZ_{(p)}).$$

It follows from the results of \cite[p.\ 40]{CLM} that the map 
$$\mathrm{Sym}^*_{\bZ_{(p)}}(\widetilde{H}_*(Y;\bZ_{(p)})) \lra H_*(QY;\bZ_{(p)})$$
from the free commutative algebra, induced by the map $Y \to Q(Y)$, is an isomorphism on homology in degrees $* \leq pk$. By the K{\"u}nneth theorem we have
$$H_*(Y;\bZ_{(p)}) = (\bZ_{(p)}[0] \oplus \bZ_{(p)}[k])^{\otimes q},$$
which is free as a $\bZ_{(p)}$-module. This shows that $H^*(QY;\bZ_{(p)})$ is a free $\bZ_{(p)}$-module in degrees $* \leq pk$, and shows that it is supported in degrees divisible by $k$. Using Galatius' theorem and the K{\"u}nneth theorem again we may therefore identify the target of the spectral sequence with $H^*(\Aut(F_\infty);\bZ_{(p)}) \otimes_{\bZ_{(p)}} H^*(QY;\bZ_{(p)})$ in a range of degrees. In total degrees $* \leq pk$ the spectral sequence takes the form
$$E_2^{*,*}=H^*(\Aut(F_\infty) ; \Gamma_{\bZ_{(p)}}^*(H_{(p)}[k-1])^{\otimes q}) \Rightarrow H^*(\Aut(F_\infty);\bZ_{(p)}) \otimes H^*(QY;\bZ_{(p)})$$
and is a spectral sequence of $H^*(\Aut(F_\infty);\bZ_{(p)})$-modules. 

For $u \in \bZ_{(p)}^\times$ the homotopy equivalence $f_u^q : Y \to Y$ induces a map of spectral sequences by functoriality, making it into a spectral sequence of $\bZ_{(p)}[\bZ_{(p)}^\times]$-modules. The induced map on $\Gamma_{\bZ_{(p)}}^i(H_{(p)}[k-1])^{\otimes q}$ is given by scalar multiplication by $u^{i q}$, so the $j(k-1)$st row of the spectral sequence is pure of weight $j$. Furthermore, the fibration \eqref{eq:LocalFibn} has a section so there are no differentials entering the bottom row, and this row splits off the filtration. The rows $(k-1),2(k-1), \ldots, (p-1)(k-1)$ have different weights which differ by at most $(p-2)$, so by Lemma \ref{lem:LocalWeight} there are no differentials in total degree $* < p(k-1)$ and the associated filtration of $H^*(\Aut(F_\infty);\bZ_{(p)}) \otimes H^*(QY;\bZ_{(p)})$ splits as $\bZ_{(p)}[\bZ_{(p)}^\times]$-modules.

The map induced by $f_u^q$ on $H^{kj}(QY;\bZ_{(p)})$ is multiplication by $u^j$, so this is pure of weight $j$. It follows that the $q(k-1)$st row of the spectral sequence may be identified in a range of degrees with $H^*(\Aut(F_\infty);\bZ_{(p)}) \otimes H^{kq}(QY;\bZ_{(p)})$ and so is a free $H^*(\Aut(F_\infty);\bZ_{(p)})$-module. The $\Aut(F_n)$-module $\Gamma_{\bZ_{(p)}}^*(H_{(p)}[k-1])^{\otimes q}$ contains $(H_{(p)}[k-1])^{\otimes q}$ as a summand in degree $q(k-1)$, so contains $S_\lambda(H_{(p)})$ as a summand in this degree too. Thus the $q(k-1)$st row of the spectral sequence contains $H^*(\Aut(F_\infty) ; S_\lambda (H_{(p)}))$ as a summand, so this is a projective $H^*(\Aut(F_\infty);\bZ_{(p)})$-module in degrees $* < p(k-1)-q(k-1) = (p-q)(k-1)$, so in all degrees as $p>q$ and $k$ was arbitrary. Finally, as $H^*(\Aut(F_\infty);\bZ_{(p)})$ is a connected graded algebra over the local ring $\bZ_{(p)}$, projective graded modules which are finitely-generated in each degree are free.
\end{proof}

\appendix

\section{An integral calculation for $\mathrm{Out}(F_n)$}\label{sec:conj}

It seems reasonable to suppose that Theorem \ref{thm:Xstab} (ii) holds with integral and not just $\bZ[\tfrac{1}{n-1}]$-module coefficients, that is, that

\begin{conj}\label{conj:LastPtStab}
The map $\mathcal{G}^1_n(X) \to \mathcal{G}_{n}(X)$ induces an isomorphism on homology in degrees $2* \leq n - 3$.
\end{conj}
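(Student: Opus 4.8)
The plan is to reduce the conjecture to an \emph{integral} homological stability statement for the spaces $\mathcal{G}_n(X)$, and then to attempt that statement by the method used for Theorem~\ref{thm:Xstab}(i).

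\medskip
\noindent\emph{Step 1: reduction.}
In the notation of Theorem~\ref{thm:Xstab} one has $\tau^0_n \circ f \simeq \tau^1_n$, and by Theorem~\ref{thm:Xstab}(i) together with Theorem~\ref{thm:Xlim} the map $\tau^1_n$ is an integral homology isomorphism in degrees $2* \le n-3$; so the conjecture is equivalent to asserting that $\tau^0_n \colon \mathcal{G}_n(X) \to Q_{1-n}(X_+)$ is an integral homology isomorphism in that range. The proof of Theorem~\ref{thm:Xstab}(ii) already shows that $\tau^0_n$ is surjective on homology in the range, and is an isomorphism after inverting $n-1$; so everything comes down to the $(n-1)$-primary torsion. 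Moreover, letting $n\to\infty$ in Theorem~\ref{thm:Xstab}(ii) shows that the forgetful map $\mathcal{G}_\infty^1(X) \to \mathcal{G}_\infty(X)$ is a $\bZ[\tfrac{1}{n-1}]$-homology equivalence for every $n$; since an abelian group which is $p$-power torsion for two distinct primes $p$ vanishes, this map is an integral homology equivalence, so (using Theorem~\ref{thm:Xlim}) $\tau^0_\infty$ is an integral homology equivalence. Hence the conjecture follows once one knows that $\mathcal{G}_n(X) \to \mathcal{G}_\infty(X)$ is an integral homology isomorphism in degrees $2* \le n-3$, i.e.\ that the spaces $\mathcal{G}_n(X)$ satisfy integral homological stability with respect to a suitable stabilisation map $\mathcal{G}_n(X) \to \mathcal{G}_{n+1}(X)$.

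\medskip
\noindent\emph{Step 2: strategy for the stability statement.}
I would run the argument of the proof of Theorem~\ref{thm:Xstab}(i). The Borel fibration
$$\mathrm{map}(\vee^n S^1, X) \lra \mathcal{G}_n(X) \lra \mathcal{G}_n(*) \simeq B\mathrm{Out}(F_n)$$
and its counterpart for $n+1$ reduce the stability statement, by spectral-sequence comparison, to showing that $G \mapsto H_i(\mathrm{map}(BG, X))$ is a polynomial coefficient system of degree $\le i$ on $\mathfrak{Gr}$ — which follows from the argument given there for $V_i^X$, now also using the split fibration $\mathrm{map}(BG,X)\to X$ with fibre $\mathrm{map}_*(BG,X)$, so that $H_i(\mathrm{map}(BG,X)) \cong \bigoplus_j H_{i-j}(X)\otimes V_j^X(G)$ — together with a homological stability theorem with polynomial coefficients for the family $\mathrm{Out}(F_n)$. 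The stabilisation map $\mathcal{G}_n(X)\to\mathcal{G}_{n+1}(X)$ lifting $B\mathrm{Out}(F_n)\to B\mathrm{Out}(F_{n+1})$ is obtained by wedging on a circle and extending the label by the constant map, and it induces the maps $V_j^X(s_G)$ on coefficients.

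\medskip
\noindent\emph{Step 3: the main obstacle.}
The difficulty is exactly this last input. Theorem~\ref{thm:RWW} gives polynomial-coefficient stability for $\mathrm{Aut}(F_n)$ because that family is controlled by the symmetric monoidal category of free products; $\mathrm{Out}(F_n)$ carries no analogous ``adjoin a free generator'' structure, and no comparably general integral twisted stability theorem for it is known. One may attempt to bootstrap from the $\mathrm{Aut}$ case using the Wang sequence of the extension $F_n \to \mathrm{Aut}(F_n)\to\mathrm{Out}(F_n)$,
$$\cdots \to H_m(\mathrm{Aut}(F_n);W) \to H_m(\mathrm{Out}(F_n);W) \to H_{m-2}(\mathrm{Out}(F_n);H\otimes W) \to H_{m-1}(\mathrm{Aut}(F_n);W) \to \cdots,$$
where a five-lemma-in-a-range argument passes from stability of $H_*(\mathrm{Out}(F_n);W)$ to stability of $H_*(\mathrm{Out}(F_n);H\otimes W)$; starting from $W=\bZ$ this handles all coefficient systems which are polynomial functors of $H$. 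But the sequence only ever introduces factors of $H$, never of $H^*$; for general simply-connected $X$ the coefficient systems $V_j^X$ need not be of this form (already $V_1^X$ can be $H^*$), and for such coefficients the integral stability of $\mathrm{Out}(F_n)$ appears just as inaccessible as the conjecture itself — one knows it only after inverting $n-1$. The route that seems actually to be required is a direct geometric proof of homological stability for $\mathcal{G}_n(X)$ in the style of Hatcher--Vogtmann \cite{HV, HV2}: carry the $X$-labels through their connectivity arguments for complexes of sphere systems, and prove that the resulting relative complexes are highly connected uniformly in the simply-connected space $X$. Establishing that connectivity is where I expect the real work to lie.
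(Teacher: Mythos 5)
Be aware that the statement you were asked to prove is exactly Conjecture~\ref{conj:LastPtStab} in Appendix~\ref{sec:conj}, and the paper does \emph{not} prove it: the author writes only that ``it seems reasonable to suppose'' Theorem~\ref{thm:Xstab}(ii) holds integrally, and then derives Proposition~\ref{prop:IntCalcOut} \emph{conditionally} on this conjecture. So there is no proof of record to compare against, and a blind attempt could not reasonably be expected to succeed.

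Your diagnosis of the difficulty is, with that caveat, largely correct and well argued. The paper's proof of Theorem~\ref{thm:Xstab}(ii) hinges on the Becker--Gottlieb transfer $\trf_\pi$, whose split injectivity on homology requires inverting the fibrewise Euler characteristic $1-n$; an integral proof must avoid this, and your conclusion --- that a direct Hatcher--Vogtmann-style stability argument for $\mathcal{G}_n(X)$ would be needed, and that neither Theorem~\ref{thm:RWW} (which concerns $\Aut(F_n)$, not $\Out(F_n)$) nor the Wang-sequence bootstrap (which only ever introduces coefficient systems built from $H$, never from $H^*$, while the coefficient systems $V_j^X$ coming from the fibration $\map(\vee^n S^1,X) \to \mathcal{G}_n(X) \to B\Out(F_n)$ already involve $H^*$ at $j=1$) suffices --- is exactly on the mark.

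There is, however, a gap in Step~1 that is worth flagging precisely because it illustrates the difficulty rather than resolving it. You claim that letting $n\to\infty$ in Theorem~\ref{thm:Xstab}(ii) and invoking two coprime values of $n-1$ shows the map ``$\mathcal{G}^1_\infty(X) \to \mathcal{G}_\infty(X)$'' is an integral homology equivalence. But the model $\mathcal{G}_n(X)$ carries no marked point, so there is no stabilisation map $\mathcal{G}_n(X)\to\mathcal{G}_{n+1}(X)$ compatible with the forgetful maps $f_n$, and $\mathcal{G}_\infty(X)$ has no independent meaning here. The groups $\ker H_m(f_n)$ for varying $n$ are different groups with no a priori comparison maps between them, so knowing each is $(n-1)$-torsion does not make them vanish; indeed, the conditional computation $H^2(\Out(F_n);H)\cong\bZ/(n-1)$ in Proposition~\ref{prop:IntCalcOut} shows that such $(n-1)$-torsion genuinely occurs in this circle of questions. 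The ``reduction'' in Step~1, and the stabilisation map $\mathcal{G}_n(X)\to\mathcal{G}_{n+1}(X)$ posited in Step~2, are therefore not reductions of the conjecture but ingredients that would have to be constructed as part of a proof --- which is consistent with your own concluding assessment that the real work lies in an $X$-labelled connectivity argument for the complexes of Hatcher--Vogtmann.
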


Putting this together with Theorem \ref{thm:Xlim}, it follows that $\tau^0_n : \mathcal{G}_{n}(X) \to Q_{1-n}(X_+)$ is an isomorphism on integral homology in degrees $2* \leq n - 3$. Assuming this conjecture, we may make the following calculation. 

\begin{prop}\label{prop:IntCalcOut}
We have
\begin{align*}
H^1(\mathrm{Out}(F_n);H) &= 0 \quad\quad\quad\quad \text{\,\,\,\,for $n \geq 7$}\\
H^2(\mathrm{Out}(F_n);H) &= \bZ/(n-1) \quad \text{for $n \geq 9$.}
\end{align*}
\end{prop}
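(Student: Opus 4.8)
The plan is to combine the integral homology statement of Conjecture~\ref{conj:LastPtStab} with a Serre spectral sequence over $B\Out(F_n)$ whose fibre cohomology contains the module $H$, and to read off the divisibility by $n-1$ from the Euler characteristic $\chi(\vee^n S^1)=1-n$ governing the Becker--Gottlieb transfer.

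I would dispose of $H^1$ first. The module $H^*=H^1(F_n;\bZ)$ has vanishing $\Out(F_n)$-coinvariants (already as a $GL_n(\bZ)$-module), so $H_0(\Out(F_n);H^*)=0$, and Lemma~\ref{lem:UCT} with $M=H^*$ gives $H^1(\Out(F_n);H)\cong\mathrm{Hom}_\bZ(H_1(\Out(F_n);H^*),\bZ)$, which is torsion-free; since $H^1(\Out(F_n);H\otimes\bZ[\tfrac1{n-1}])=0$ by Theorem~\ref{thm:Main5}(i) (as recorded in the introduction) it is also $(n-1)$-power torsion, hence zero. The same two inputs and Lemma~\ref{lem:UCT} show $H^2(\Out(F_n);H)\cong\mathrm{Ext}^1_\bZ(H_1(\Out(F_n);H^*),\bZ)$ is finite and $(n-1)$-power torsion; the task is to pin it down. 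For this I would apply Conjecture~\ref{conj:LastPtStab} with $X=\CPinf=K(\bZ,2)$: together with Theorems~\ref{thm:Xlim} and~\ref{thm:Xstab}(i) it makes $\tau^0_n\colon\mathcal{G}_n(\CPinf)\to Q_{1-n}(\CPinf_+)$ an integral (co)homology isomorphism in degrees $2*\le n-3$, and $Q_{1-n}(\CPinf_+)\simeq Q_0(S^0)\times Q(\CPinf)$ has $H^2(-;\bZ)\cong\bZ/2\oplus\bZ$ and $H^3(-;\bZ)\cong\bZ/2$ by a standard computation. Then I would study the cohomology Serre spectral sequence of
$$F:=\mathrm{map}(\vee^n S^1,\CPinf)\simeq\CPinf\times(S^1)^n\lra\mathcal{G}_n(\CPinf)\lra\mathcal{G}_n(*)\simeq B\Out(F_n),$$
whose fibre has $H^1(F;\bZ)\cong H$ and $H^2(F;\bZ)\cong\bZ c\oplus\wedge^2 H$ with $c$ the pullback of a generator $u\in H^2(\CPinf;\bZ)$. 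Using $H^1(\Out(F_n);H)=0$ and the absence of $\Out(F_n)$-invariants in $\wedge^2 H$ one gets $E_2^{0,2}=\bZ c$ and $E_2^{2,1}=H^2(\Out(F_n);H)$, with $d_2\colon\bZ c\to H^2(\Out(F_n);H)$ the only relevant differential in this range.

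The Euler characteristic enters through the transfer. Writing $\pi\colon\mathcal{G}_n(\CPinf)_*\to\mathcal{G}_n(\CPinf)$ for the universal graph and $e$ for evaluation into $\CPinf$, set $\kappa:=\trf_\pi^*(e^*u)\in H^2(\mathcal{G}_n(\CPinf);\bZ)$; this is $\tau^{0*}_n$ of the generator of the free summand of $H^2(Q_{1-n}(\CPinf_+);\bZ)$, so it generates the free part of $H^2(\mathcal{G}_n(\CPinf);\bZ)$. Restricting the transfer to a fibre $F$ of $\mathcal{G}_n(\CPinf)\to B\Out(F_n)$, where $\pi$ becomes the trivial bundle $F\times\vee^n S^1\to F$ with fibre $\vee^n S^1$, one computes $\kappa|_F=(1-n)c$ (the transfer multiplies classes pulled back from $F$ by $\chi(\vee^n S^1)=1-n$ and annihilates positive-degree fibre classes). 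Since the edge epimorphism $H^2(\mathcal{G}_n(\CPinf);\bZ)\twoheadrightarrow E_\infty^{0,2}\subseteq\bZ c$ sends $\kappa\mapsto(1-n)c$ and kills torsion, $E_\infty^{0,2}=(1-n)\bZ c=\ker d_2$, so $d_2(c)$ has order exactly $n-1$ and $\bZ/(n-1)\hookrightarrow H^2(\Out(F_n);H)$. Conversely $H^2(\Out(F_n);H)/\langle d_2(c)\rangle$ is squeezed between $E_\infty^{2,1}$ (a subquotient of $H^3(Q_{1-n}(\CPinf_+);\bZ)=\bZ/2$) and $H^4(\Out(F_n);\bZ)\cong H^4(\Sigma_\infty;\bZ)$, hence has order bounded independently of $n$; combined with $(n-1)$-power torsion this should force $H^2(\Out(F_n);H)\cong\bZ/(n-1)$ for $n$ large.

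The step I expect to be the main obstacle is the transgression identity $\kappa|_F=(1-n)c$: one must push the ``integration along the graph'' through the Becker--Gottlieb transfer carefully enough to see that exactly the Euler characteristic, and no other multiple, intervenes. Secondarily, the final squeeze — ruling out spurious small-prime contributions to $H^2(\Out(F_n);H)/\langle d_2(c)\rangle$ and obtaining the sharp range $n\ge 9$ — may need to be supplemented, e.g.\ by a direct low-degree computation or by the $\Aut/\Out$ comparison of Lemma~\ref{lem:OutSplit}.
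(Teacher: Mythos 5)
Your setup is essentially the same as the paper's: the Serre spectral sequence of $\map(\vee^n S^1,\bC\bP^\infty)\to\mathcal G_n(\bC\bP^\infty)\to B\Out(F_n)$, the identification of the abutment with $H^*(Q_0(\bC\bP^\infty_+);\bZ)$ in a range via Conjecture~\ref{conj:LastPtStab} and Theorem~\ref{thm:Xlim}, and the Becker--Gottlieb transfer computation identifying $\ker(d_2\colon E_2^{0,2}\to E_2^{2,1})$ with $(n-1)\bZ c$. Your worry about the transfer step is misplaced: the computation $\kappa|_F=(1-n)c$ is exactly the standard property of the Becker--Gottlieb transfer for a trivial bundle (it multiplies classes pulled back from the base by $\chi$ and kills positive-degree fibre classes), and is correct as you state it. Your treatment of $H^1$ via Lemma~\ref{lem:UCT} and the localized vanishing from Theorem~\ref{thm:Main5}(i) is a clean and valid alternative to running the spectral sequence in total degree $2$; the paper itself records this route (right after the statement of the Proposition), relying on Satoh's computation instead of Theorem~\ref{thm:Main5}.

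The genuine gap is in the last step for $H^2$. Bounding $H^2(\Out(F_n);H)/\langle d_2(c)\rangle$ by a quantity independent of $n$ and combining with $(n-1)$--power torsion does \emph{not} pin the group down: for infinitely many $n$ (for instance whenever $n-1$ is even) the bound leaves open the possibility $H^2\cong\bZ/2(n-1)$ or $\bZ/(n-1)\oplus\bZ/2$, so the argument as written does not conclude the proposition for any announced range of $n$. The missing ingredient is that the fibration $\pi\colon\mathcal G_n(\bC\bP^\infty)\to\mathcal G_n(*)$ has a \emph{section}, given by sending a graph to itself with the constant map to the basepoint of $\bC\bP^\infty$ (this is the map $\iota$ the paper introduces, restricted to $\mathcal G_n(*)\times\{*\}$). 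The section kills all differentials landing in the bottom row, which buys you two things at once: (a) $d_2\colon E_2^{2,1}\to E_2^{4,0}$ is zero, so $E_\infty^{2,1}=E_3^{2,1}=E_2^{2,1}/\mathrm{im}(d_2)=H^2(\Out(F_n);H)/\langle d_2(c)\rangle$ on the nose (no further differentials touch $(2,1)$); and (b) $E_\infty^{3,0}=E_2^{3,0}=H^3(\Out(F_n);\bZ)\cong\bZ/2$ in the stable range, which already equals $H^3(Q_0(\bC\bP^\infty_+);\bZ)=\bZ/2$, so the filtration of $H^3$ is concentrated in top filtration and $E_\infty^{2,1}=0$. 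Together with your computation that $d_2(c)$ has order exactly $n-1$, this gives $H^2(\Out(F_n);H)\cong\bZ/(n-1)$ outright, with the range $n\ge 9$ coming from needing the degree-$3$ comparison $2\cdot 3\le n-3$. Once you have the section, the auxiliary $(n-1)$--torsion input from Theorem~\ref{thm:Main5}(i) is no longer needed for the $H^2$ computation.
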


We emphasise that this proposition holds \emph{without} assuming Conjecture \ref{conj:LastPtStab}: it follows from Theorem \ref{thm:Main2} that these groups are torsion, Satoh \cite{Satoh} has computed that $H_1(\Out(F_n);H^*) \cong \bZ/(n-1)$ for $n \geq 4$, and one easily computes that $H_0(\Out(F_n);H^*)=0$. Our purpose here is to give another proof of this proposition using Conjecture \ref{conj:LastPtStab}.

This proposition should be contrasted with with a theorem of Bridson and Vogtmann \cite[Theorem B]{BridVogt}, who show that the extension
$$H = F_n / F_n' \lra \mathrm{Aut}(F_n)/F_n' \lra \mathrm{Out}(F_n)$$
is non-trivial for all $n \geq 2$, and hence gives a non-trivial class $\zeta \in H^2(\mathrm{Out}(F_n);H)$. Our calculation $H^2(\mathrm{Out}(F_n);H) = \bZ/(n-1)$ along with the result of Bridson and Vogtmann \cite{BridVogt} that their class $\zeta$ remains non-trivial in the group $H^2(\mathrm{Out}(F_n);H/rH)$ for any $r$ not coprime to $(n-1)$ implies that the class $\zeta$ generates $H^2(\mathrm{Out}(F_n);H)$ as long as $n \geq 9$.

\begin{proof}[Proof of Proposition \ref{prop:IntCalcOut} assuming Conjecture \ref{conj:LastPtStab}]
Consider the diagram
\begin{equation*}
\xymatrix{
BH^* \ar[d]\\
\mathrm{map}(\vee^n S^1, \bC\bP^\infty) \ar[d]^p \ar[r] & \mathcal{G}_n(\bC\bP^\infty)  \ar[r]^-{\pi} &\mathcal{G}_n(*) \simeq B\mathrm{Out}(F_n)\\
\bC\bP^\infty
}
\end{equation*}
where the row and column are fibrations. Note that $p$ is a trivial fibration and is split via the inclusion $s : \bC\bP^\infty \to \mathrm{map}(\vee^n S^1, \bC\bP^\infty)$ of the constant maps, and there is an inclusion $\iota : \mathcal{G}_n(*) \times \bC\bP^\infty \to \mathcal{G}_n(\bC\bP^\infty)$ of the graphs with constant maps to $\bC\bP^\infty$. The Leray--Serre spectral sequence for the horizontal fibration is
\begin{equation}\label{eq:sseqOut}
\bar{E}_2^{p,*} := H^p(\mathrm{Out}(F_n); \wedge^* H) \otimes \bZ[a] \Longrightarrow H^*(\mathcal{G}_n(\bC\bP^\infty);\bZ)
\end{equation}
where $a$ is the canonical class in $H^2(\bC\bP^\infty;\bZ)$, so has bidegree $(p,q)=(0,2)$.

We first claim that the map
$$\bC\bP^\infty \overset{s}\lra \mathrm{map}(\vee^n S^1, \bC\bP^\infty) \lra \mathcal{G}_n(\bC\bP^\infty)$$
has image $(n-1)\bZ \subset \bZ = H^2(\bC\bP^\infty;\bZ)$ on second cohomology. By our conjecture, it is enough to prove this after composing with the map $\tau_n^0 : \mathcal{G}_n(\bC\bP^\infty) \to Q_0(\bC\bP^\infty_+)$ as long as $n \geq 7$. Up to translation of components, this map is given by the Becker--Gottlieb transfer $\bC\bP^\infty  \to Q_{1-n}(\bC\bP^\infty \times (\vee^n S^1)_+)$ for the trivial graph bundle over $\bC\bP^\infty$ composed with projection to $Q_{1-n}(\bC\bP^\infty_+)$. By standard properties of the transfer, this is $(1-n)$ times the standard inclusion, which on second cohomology induces multiplication by $(1-n)$, as required.

This describes the edge homomorphism of the spectral sequence (\ref{eq:sseqOut}). We now use the homotopy equivalence
$$Q_0(\bC\bP^\infty_+) \simeq Q_0(S^0) \times Q(\bC\bP^\infty)$$
and that $S^2 \to Q(\bC\bP^\infty)$ is 3-connected (by the Freudenthal suspension theorem) to describe the cohomology of $Q_0(\bC\bP^\infty_+)$ in low degrees. Using that $H^i(Q_0(S^0);\bZ)$ is $\bZ, 0, \bZ/2, \bZ/2$ for $i=0,1,2,3$, it follows that $H^i(Q_0(\bC\bP^\infty_+);\bZ)$ is $\bZ, 0, \bZ \oplus \bZ/2, \bZ/2$ for $i=0,1,2,3$. As the spectral sequence (\ref{eq:sseqOut}) converges to zero for positive Leray filtration in total degree $3$, the differential $d_2 : \bZ = \bar{E}_2^{0, 2} \to \bar{E}_2^{2,1}$ must be onto (so $\bar{E}_2^{2,1}$ is cyclic) and the kernel is $(n-1)\bZ$, so $\bar{E}_2^{2,1} = H^2(\mathrm{Out}(F_n);H) \cong \bZ/(n-1)$. On the other hand, in total degree $2$ we see $(n-1)\bZ = \bar{E}_\infty^{0,2}$ and $\bZ/2 = \bar{E}_\infty^{2,0}$, and it converges to $\bZ/2 \oplus \bZ$, so observing the direction of the Leray filtration we see that $H^1(\mathrm{Out}(F_n);H) = \bar{E}_2^{1,1} =0$.
\end{proof}

\section{Mapping class groups of surfaces}\label{sec:MCG}

Let $\Gamma_{g}$ denote the mapping class group of a surface $\Sigma_{g}$ of genus $g$, and consider $H_\bQ := H_1(\Sigma_{g};\bQ)$ as a $\Gamma_{g}$-module. In this case Poincar{\'e} duality gives $H^*_\bQ \cong H_\bQ$. Looijenga has already computed $H^*(\Gamma_{g};S_\lambda(H_\bQ))$ in the stable range \cite{Looijenga2}, but we wish to explain here how the computation may also be performed using the techniques of this paper.

Cohen and Madsen \cite{CM} have introduced spaces $\mathcal{S}_{g}(X)$ of surfaces diffeomorphic to $\Sigma_{g}$ equipped with a map to $X$, and have identified the stable (co)homology of these spaces with that of the infinite loop space $\Omega^\infty(\mathrm{MTSO}(2) \wedge X_+)$ as long as $X$ is simply-connected. Furthermore, there is a fibration sequence
$$\map(\Sigma_g, X) \lra \mathcal{S}_g(X) \lra B\Gamma_g$$
so an associated Serre spectral sequence.

Considering $X=K(V^*, 4)$, the argument of Section \ref{sec:PfB} goes through without change and identifies
$$\bigoplus_{\lambda} H^*(\Gamma_{g} ; S_\lambda(H_\bQ)) \otimes S_{\lambda'}(V)[3\vert\lambda\vert] \otimes \mathrm{Sym}^*(V[2]) \otimes \mathrm{Sym}^*(V[4])$$
with
$$H^*(\Omega^\infty_0(\mathrm{MTSO}(2) \wedge K(V^*, 4)_+);\bQ) = \mathrm{Sym}^*([W_* \otimes \mathrm{Sym}^*(V[4])]_{>0})$$
in a range of degrees increasing with $g$, where $W_* = H^*(\mathrm{MTSO}(2);\bQ)$ is the graded vector space which is $\bQ$ in degrees $-2,0,2,4,6,\ldots$ and zero otherwise. This may be written as
$$\mathrm{Sym}^*(W_{*>0}) \otimes \mathrm{Sym}^*(V[2]) \otimes \mathrm{Sym}^*(V[4]) \otimes \mathrm{Sym}^*(W_{*>0} \otimes V[4] \oplus W_* \otimes \mathrm{Sym}^{*>1}(V[4]))$$
and the first term $\mathrm{Sym}^*(W_{*>0})$ is $H^*(\Gamma_{g};\bQ)$ in the stable range, so we obtain
$$\bigoplus_{\lambda} H^*(\Gamma_{g} ; S_\lambda(H_\bQ)) \otimes S_{\lambda'}(V)[3\vert\lambda\vert] \cong H^*(\Gamma_g;\bQ) \otimes \mathrm{Sym}^*(W_{*>0} \otimes V[4] \oplus W_* \otimes \mathrm{Sym}^{*>1}(V[4]))$$
in a range of degrees increasing with $g$.

It follows that each $H^*(\Gamma_{g} ; S_\lambda(H_\bQ))$ is a free $H^*(\Gamma_{g};\bQ)$-module in the stable range. To describe the space of module generators we can proceed as in the proof of Theorem \ref{thm:Main1}, and hence identify $H^*(\Gamma_g; H_\bQ^{\otimes q})$ as a graded $\Sigma_q$-module with
$$H^*(\Gamma_g;\bQ) \otimes \mathrm{Hom}_{GL(V)}(V^{\otimes q}, \mathrm{Sym}^*(W_{*>0} \otimes V[4] \oplus W_* \otimes \mathrm{Sym}^{*>1}(V[4])))[-3q] \otimes \bQ^-.$$
Proposition \ref{prop:IdentifySymSym} identifies the space of $GL(V)$-module homomorphisms  with the permutation module on the set of the following data: a partition $P$ of $\{1,2,\ldots,q\}$, a labeling of each part of size 1 in the set $\{x_2, x_4, x_6, \ldots\}$, and a labeling of each part of size $>1$ in the set $\{x_{-2}, x_0, x_2, x_4, x_6, \ldots\}$. Such a datum is given grading $q$ plus the sum of the degrees of the labels (which are given by their subscripts).

\begin{example}
When $q=1$ we find that $H^*(\Gamma_g;H_\bQ)$ is a free $H^*(\Gamma_g;\bQ)$-module with generators in degrees $\{3,5,7, \ldots\}$.

When $q=2$ the partition $\{1,2\}$ may be labeled by $\{x_{-2}, x_0, x_2, x_4, x_6, \ldots\}$, and the partition $\{\{1\},\{2\}\}$ may have each part labeled by $\{x_2, x_4, x_6, \ldots\}$. As a $\Sigma_2$-set the action is trivial on the first type of elements, and the second type form trivial orbits
$$\{\{1;x_{2i}\},\{2;x_{2i}\}\}$$
and free orbits
$$\{\{1;x_{2i}\},\{2;x_{2j}\}\}, \{\{1;x_{2j}\},\{2;x_{2i}\}\}$$
with $i \neq j$. Thus $H^*(\Gamma_g;\wedge^2(H_\bQ))$ has $H^*(\Gamma_g;\bQ)$-module generators given by the multiplicities of the trivial representation in the indicated permutation module, so in degrees $\{0,2,4,6,\ldots;6,8,10,\ldots; 10,12,14,\ldots; 14,16,18,\ldots;\ldots\}$. Similarly $H^*(\Gamma_g;\mathrm{Sym}^2(H_\bQ))$ has $H^*(\Gamma_g;\bQ)$-module generators given by the multiplicities of the sign representation in the indicated permutation module, so in degrees $\{8,10,12,\ldots; 12,14,16,\ldots;16,18,20,\ldots;\ldots\}$.

See \cite[Theorem G]{ERW10} for a complete calculation in the case of exterior powers, and \cite[Proposition 5.2]{RWImm} for a complete calculation in the case of symmetric powers.
\end{example}

By considering the analogous space $\mathcal{S}_{g,1}(X)$ of surfaces of genus $g$ with one boundary equipped with maps to $X=K(V^*, 4)$ which map the boundary to the basepoint, one identifies $\bigoplus_{\lambda} H^*(\Gamma_{g,1} ; S_\lambda(H_\bQ)) \otimes S_{\lambda'}(V)[3\vert\lambda\vert]$ with
$$ H^*(\Gamma_{g,1};\bQ) \otimes \mathrm{Sym}^*(W_{*>-2} \otimes V[4] \oplus W_* \otimes \mathrm{Sym}^{*>0}(V[4]))$$
in a range of degrees increasing with $g$, from which $H^*(\Gamma_{g,1} ; H_\bQ^{\otimes q})$ may be computed as above. There is a similar description in terms of partitions, the only difference being that now pieces of a partition of size 1 may also be labeled by $x_0$. For example, one finds that $H^*(\Gamma_{g,1} ; H_\bQ)$ is a free $H^*(\Gamma_{g,1};\bQ)$-module with generators in degrees $\{1,3,5,7,\ldots\}$. This description of $H^*(\Gamma_{g,1} ; H_\bQ^{\otimes q})$ (in terms of labelled partitions) has previously been obtained by Kawazumi \cite{Kawazumi}, cf.\ Theorem 1.B of that paper, even with integral as opposed to just rational coefficients. However Kawazumi does not establish this description \emph{as $\Sigma_q$-modules}, because his method of proof breaks the $\Sigma_q$-symmetry.

\section{Tables}\label{sec:tables}

The result of Theorems \ref{thm:Main1} and \ref{thm:Main2} for $\vert \lambda \vert \leq 6$ is compiled in the tables below, which was computed with the {\tt SchurRings} package for {\tt Macaulay2} \cite{M2}, and checked via Theorem \ref{thm:Main4} and the table at the end of Section 4.3 of \cite{Manivel}.

\begin{table}[h!]
  \centering
  \caption*{Dimensions of $H^{\vert \lambda \vert}(\Aut(F_n) ; S_\lambda(H_\bQ))$ for $\vert \lambda \vert \leq 6$ and $n \geq 2\vert \lambda \vert+3$.}
  \begin{tabular}{c|c|cc|ccc|ccccc}
    \toprule
    $()$& $(1)$ & $(1^2)$ & $(2)$ & $(1^3)$ & $(21)$ & $(3)$ & $(1^4)$ & $(2 1^2)$ & $(2^2)$ & $(31)$ & $(4)$\\
    \midrule
    1 & 1 & 2 & 0 & 3 & 1 & 0 & 5 & 2 & 2 & 0 & 0\\
    \bottomrule
  \end{tabular}
  \begin{tabular}{ccccccc}
    \toprule
    $(1^5)$ & $(2 1^3)$ & $(2^2 1)$ & $(3 1^2)$ & $(32)$ & $(41)$ & $(5)$\\
    \midrule
    7 & 5 & 4 & 0 & 1 & 0 & 0 \\
    \bottomrule
  \end{tabular}
  \begin{tabular}{ccccccccccc}
    \toprule
    $(1^6)$ & $(2 1^4)$ & $(2^2 1^2)$ & $(2^3)$ & $(3 1^3)$ & $(321)$ & $(3^2)$ & $(4 1^2)$ & $(42)$ & $(51)$ & $(6)$\\
    \midrule
    11 & 8 & 10 & 2 & 1 & 2 & 2 & 0 & 0 & 0 & 0 \\
    \bottomrule
  \end{tabular}
\end{table}

\begin{table}[h!]
  \centering
  \caption*{Dimensions of $H^{\vert \lambda \vert}(\Out(F_n) ; S_\lambda(H_\bQ))$ for $\vert \lambda \vert \leq 6$ and $n \geq 4\vert \lambda \vert+3$.}
  \begin{tabular}{c|c|cc|ccc|ccccc}
    \toprule
    $()$ & $(1)$ & $(11)$ & $(2)$ & $(111)$ & $(21)$ & $(3)$ & $(1111)$ & $(211)$ & $(22)$ & $(31)$ & $(4)$\\
    \midrule
    1 & 0 & 1 & 0 & 1 & 0 & 0 & 2 & 0 & 1 & 0 & 0\\
    \bottomrule
  \end{tabular}
  \begin{tabular}{ccccccc}
    \toprule
    $(11111)$ & $(2111)$ & $(221)$ & $(311)$ & $(32)$ & $(41)$ & $(5)$\\
    \midrule
    2 & 1 & 1 & 0 & 0 & 0 & 0 \\
    \bottomrule
  \end{tabular}
  \begin{tabular}{ccccccccccc}
    \toprule
    $(1^6)$ & $(2 1^4)$ & $(2^2 1^2)$ & $(2^3)$ & $(3 1^3)$ & $(321)$ & $(3^2)$ & $(4 1^2)$ & $(42)$ & $(51)$ & $(6)$\\
    \midrule
    4 & 1 & 3 & 0 & 0 & 0 & 1 & 0 & 0 & 0 & 0\\
    \bottomrule
  \end{tabular}
\end{table}

\bibliographystyle{plain}
\bibliography{MainBib}

\end{document}